\documentclass[a4paper, reqno, 10pt, headings=standardclasses, twoside, numbers=endperiod, DIV=8, BCOR=0cm]{scrartcl}

\usepackage{amsmath, amsthm, amssymb}
\usepackage{mathtools}
\usepackage{xcolor}
\usepackage{fixltx2e}
\usepackage{enumitem}
\usepackage{url}
\usepackage{lmodern}
\usepackage{eucal}
\usepackage[hyperfootnotes=false]{hyperref}
\usepackage[left=3cm, right=3cm, top=3cm, bottom=3cm]{geometry}
\usepackage[capitalise, noabbrev, nameinlink]{cleveref}
\usepackage{scrlayer-scrpage}

\clearscrheadfoot
\cehead{B. Topacogullari}
\cofoot*{\pagemark}
\cohead{The fourth moment of Dirichlet \(L\)-functions}
\cefoot*{\pagemark}

\pagestyle{scrheadings}

\addtokomafont{section}{\large}
\addtokomafont{subsection}{\normalsize}

\RedeclareSectionCommand[afterskip=-0.75em]{subsection}

\definecolor{color_link}{rgb}{0.0,0.1,0.85}
\hypersetup{hypertexnames=false, colorlinks=true, citecolor=color_link, linkcolor=color_link, urlcolor=color_link, pdfauthor=Berke Topacogullari, pdftitle=The fourth moment of individual Dirichlet L-functions on the critical line}

\makeatletter
  \def\blfootnote{\gdef\@thefnmark{}\@footnotetext}
\makeatother

\theoremstyle{plain}
\newtheorem{theorem}{Theorem}[section]
\newtheorem{lemma}[theorem]{Lemma}
\newtheorem{proposition}[theorem]{Proposition}
\numberwithin{equation}{section}

\let\originalleft\left
\let\originalright\right
\renewcommand{\left}{\mathopen{}\mathclose\bgroup\originalleft}
\renewcommand{\right}{\aftergroup\egroup\originalright}

\renewcommand\Re{\operatorname{Re}}
\renewcommand\Im{\operatorname{Im}}
\newcommand\GL{\mathrm{GL}}
\newcommand\twoln[2]{{\substack{#1 \\ #2}}}
\newcommand\thrln[3]{{\substack{#1 \\ #2 \\ #3}}}

\newcommand{\arcosh}{\operatorname{arcosh}}

\renewcommand{\d}{\mathrm{d}}
\renewcommand{\i}{\mathrm{i}}
\newcommand\Res[1]{\underset{#1}{\operatorname{Res}}\,\,}
\newcommand\mf[1]{\mathfrak{#1}}

\renewcommand\O{O}
\newcommand\LO[1]{\O\left(#1\right)}
\newcommand\supp{\operatorname{supp}}

\newcommand\mc[1]{\mathcal{#1}}

\newcommand\eps\varepsilon
\renewcommand\phi\varphi

\newcommand\ZZ{\mathbb{Z}}
\newcommand\QQ{\mathbb{Q}}
\newcommand\RR{\mathbb{R}}
\newcommand\CC{\mathbb{C}}

\frenchspacing

\title{\LARGE The fourth moment of individual Dirichlet \(L\)-functions on the critical line}
\author{\Large Berke Topacogullari}

\date{%
  \vskip\baselineskip
  \normalfont\normalsize%
  \parbox{0.8\linewidth}{%
    \noindent
    \textbf{Abstract.} We prove an asymptotic formula for the second moment of a product of two Dirichlet \(L\)-functions on the critical line, which has a power saving in the error term and which is uniform with respect to the involved Dirichlet characters.
    As special cases we give uniform asymptotic formulae for the fourth moment of individual Dirichlet \(L\)-functions and for the second moment of Dedekind zeta functions of quadratic number fields on the critical line.
  }
  \vskip\baselineskip
}

\begin{document}

\maketitle

\blfootnote{\textup{2010} \textit{Mathematics Subject Classification:} 11M06}
\blfootnote{\textit{Key words and phrases:} Moments of \(L\)-functions, Dirichlet \(L\)-functions, Dedekind zeta functions}

\section{Introduction} \label{1}

Moments of \(L\)-functions are a central topic in analytic number theory, not only due to their many important applications, but also because they give insight into the behaviour of \(L\)-functions in the critical strip.

One of the most famous and best-studied examples in this regard is the fourth moment of the Riemann zeta function
\begin{equation} \label{1063}
  \int_1^T \! \left| \zeta\left( \tfrac12 + \i t \right) \right|^4 \, \d t.
\end{equation}
The first asymptotic formula for~\eqref{1063} goes back to Ingham~\cite{Ing27}, who proved that
\[ \int_1^T \! \left| \zeta\left( \tfrac12 + \i t \right) \right|^4 \, \d t = \frac1{2 \pi^2} T (\log T)^4 + \LO{ T (\log T)^3 }. \]
It was not until several decades later that Heath-Brown~\cite{Hea79} was able to improve on this estimate.
His result, which marked a major advance in the subject, states that
\begin{equation} \label{1054}
  \int_1^T \! \left| \zeta\left( \tfrac12 + \i t \right) \right|^4 \, \d t = T P(\log T) + \LO{ T^{\frac78 + \eps} },
\end{equation}
where \(P\) is a certain polynomial of degree~\(4\).
Further progress came with the development of methods originating in the spectral theory of automorphic forms, in particular the Kuznetsov formula~\cite{Kuz80}.
Zavorotnyi~\cite{Zav89} was thus able to lower the exponent in the error term in~\eqref{1054} and show that
\begin{equation} \label{1099}
  \int_1^T \! \left| \zeta\left( \tfrac12 + \i t \right) \right|^4 \, \d t = T P(\log T) + \LO{ T^{\frac23 + \eps} }.
\end{equation}
Motohashi~\cite[Theorem~4.2]{Mot97a} established an explicit formula which expresses a smooth version of the fourth moment~\eqref{1063} in terms of the cubes of the central values of certain automorphic \(L\)-functions.
His result is significant, as it allows a much deeper understanding of~\eqref{1063} than a mere asymptotic estimate, in addition to having many remarkable applications (see e.g.~\cite{IM94, IM95}).
The best estimate for~\eqref{1063} to date is due to Ivi{\'c} and Motohashi~\cite[Theorem~1]{IM95} who, by making use of the explicit formula, were able to replace the factor~\( T^\eps \) in~\eqref{1099} by a suitable power of~\( \log T \).

In this article, we are interested in the analogous problem for Dirichlet \(L\)-functions.
Naturally, the fourth moment can here be taken in two different ways:
On the one hand, we can look at an individual Dirichlet \(L\)-function and take the average along the critical line as in~\eqref{1063}.
On the other hand, we can focus on the central point~\( s = 1/2 \) and take the average over a suitable subset of Dirichlet characters, most typically the set of all primitive Dirichlet characters of a given modulus~\(q\).

The latter case has probably received most of the attention.
The first result goes back to Heath-Brown~\cite{Hea81}, who proved an asymptotic formula for those~\(q\) with not too many prime factors, which was later extended by Soundararajan~\cite{Sou07} to all~\(q\).
Young~\cite{You11} achieved a major breakthrough when he proved, for \(q\)~prime, an asymptotic formula with a power saving in the error term.
His result states that
\begin{equation} \label{1030}
  \sideset{}{^\ast} \sum_{ \chi \bmod q } \left| L\left( \tfrac12, \chi \right) \right|^4 = \phi(q)^\ast P(\log q) + \LO{ q^{ 1 - \frac5{512} + \eps } },
\end{equation}
where the~\( \ast \) on the sum indicates that the sum is restricted to primitive Dirichlet characters, where \( \phi^\ast(q) \) denotes the number of primitive characters mod~\(q\), and where \( P \) is a certain polynomial of degree~\(4\).
As in the works of Zavorotnyi~\cite{Zav89} and Motohashi~\cite{Mot97a}, his proof relies crucially on methods coming from the spectral theory of automorphc forms.
The exponent~\( 5/512 \) in the error term was later improved to~\( 1/20 \) by Blomer, Fouvry, Kowalski, Michel and Mili{\'c}evi{\'c}~\cite{BFKMM17a, BFKMM17b}.

A few results are also available if an additional average over~\(t\) is included.
Rane~\cite{Ran81} showed that
\begin{equation} \label{1060}
  \sideset{}{^\ast} \sum_{\chi \bmod q} \int_T^{2T} \! \left| L\left( \tfrac12 + \i t \right) \right|^4 \, \d t = C(q) \phi^\ast(q) T ( \log qT )^4 + \LO{ 2^{ \omega(q) } \phi^\ast(q) T (\log qT)^3 (\log\log 3q)^5 },
\end{equation}
where \( \omega(q) \) denotes the number of prime factors of~\(q\), and where \( C(q) \) is a certain constant depending on~\(q\).
This is an asymptotic formula in certain ranges of \(q\)~and~\(T\).
Bui and Heath-Brown~\cite{BH10} sharpened the error term in~\eqref{1060}, and established an asymptotic formula when~\( q \) goes to infinity.
Another result is due to Wang~\cite{Wan91}, who proved that, for~\( q \leq T \),
\begin{equation} \label{1052}
  \sideset{}{^\ast} \sum_{\chi \bmod q} \int_0^T \! \left| L\left( \tfrac12 + \i t \right) \right|^4 \, \d t = \phi(q)^\ast T P_q\left( \log T \right) + \LO{ \min\left\{ q^\frac98 T^{ \frac78 + \eps }, q T^{ \frac{11}{12} + \eps } \right\} },
\end{equation}
where \( P_q \) is a certain polynomial of degree~\(4\) with coefficients depending on~\(q\).

The direct analogue of~\eqref{1063}, that is the fourth moment of an individual Dirichlet \(L\)-function on the critical line
\begin{equation} \label{1019}
  \int_1^T \left| L\right( \tfrac12 + \i t, \chi \left) \right|^4 \d t,
\end{equation}
has received much less attention.
If \(\chi\) is considered fixed, then a simple asymptotic formula for~\eqref{1019} can be obtained by classical methods, although this has not been worked out explicitly in the literature.
It is a much more difficult problem to obtain estimates uniform in~\(\chi\) and comparable in strength to what can be achieved for~\( \zeta(s) \).
It is this latter problem which we want to address here.

Our main result is as follows.

\begin{theorem} \label{101}
  Let~\( \eps > 0 \).
  Let \( \chi \)~mod~\(q\) be a primitive Dirichlet character.
  Then we have, for~\( T \geq 1 \),
  \begin{equation} \label{1096}
    \int_1^T \left| L\left( \tfrac12 + \i t, \chi \right) \right|^4 \d t = \int_1^T \! P_\chi (\log t) \, \d t + \LO{ q^{2 - 3\theta} T^{\frac12 + \theta + \eps} + q T^{\frac23 + \eps} },
  \end{equation}
  where \( P_\chi \) is a polynomial of degree~\(4\), whose coefficients depend only on~\(q\), and where the implicit constant depends only on~\(\eps\).
\end{theorem}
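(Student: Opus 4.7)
The overall strategy I would pursue is the one suggested by the abstract: reduce the fourth moment of \(L(s,\chi)\) to the second moment of a product \(L(s,\chi_1)L(s,\chi_2)\) and attack the latter via an approximate functional equation. Writing \(L(s,\chi)^2 = \sum_n \tau(n)\chi(n) n^{-s}\) and applying a smoothed approximate functional equation of Hardy--Littlewood type, one obtains
\[
  \left| L\left( \tfrac12 + \i t, \chi \right) \right|^4 = 2 \Re \sum_{n,m} \frac{\tau(n)\tau(m)\chi(n)\overline{\chi(m)}}{(nm)^{1/2}} \left( \frac{m}{n} \right)^{\!\i t} V_t(nm) + \text{(dual piece)},
\]
with \(V_t\) essentially restricting \(nm \lesssim (qT)^2\). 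Integrating over \(t \in [1,T]\) produces an oscillatory integral which forces \(|n-m|\) to be small compared to \(\sqrt{nm}/T\). The diagonal \(n=m\) is handled by a standard contour shift on \(\sum_n \tau(n)^2\chi_0(n) n^{-s}\) and yields the main term \(\int P_\chi(\log t)\,\d t\).

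The heart of the proof is the off-diagonal contribution, which, after writing \(m = n+h\), reduces to estimating (in \(h\), on average) the shifted convolution sum
\[
  \mc D_\chi(h; N) = \sum_n \tau(n)\tau(n+h)\chi(n)\overline{\chi(n+h)} \, W\!\left( \tfrac{n}{N} \right),
\]
for \(N \lesssim qT\) and \(|h| \lesssim N/T\). I would open one of the divisor functions by a Voronoi-type summation (twisting by \(\chi\) enlarges the modulus from \(1\) to \(q^2\)) and then apply the Kuznetsov trace formula on \(\Gamma_0(q^2)\) (or a suitable level) to the resulting sum of Kloosterman sums. This decomposes \(\mc D_\chi(h;N)\) spectrally into contributions from Maass cusp forms, holomorphic forms, and Eisenstein series. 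The Eisenstein part, combined with a dual treatment, recovers a correction that merges with the diagonal, while the cuspidal contribution is bounded via the Weyl-type inequality for the fourth moment over the spectrum together with the Kim--Sarnak bound \(\theta\) toward Ramanujan, which is precisely where the exponent \(\theta\) enters the error term \(q^{2-3\theta}T^{1/2+\theta+\eps}\). The secondary error \(qT^{2/3+\eps}\) arises from the standard treatment of short sums of Kloosterman sums via Weil's bound, mimicking the Zavorotnyi error for \(\zeta\).

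The main obstacle, and what distinguishes this result from the classical \(\zeta\)-function case, is the simultaneous uniformity in \(q\) and \(T\): the spectral analysis has to be carried out on a level depending on \(q\), and the sizes of the variables \(n, m, h\) scale with \(qT\) rather than \(T\). Controlling the cusp structure of \(\Gamma_0(q^2)\), normalising Fourier coefficients correctly across all cusps, and ensuring that large-sieve and Rankin--Selberg estimates retain the correct \(q\)-dependence will be the most delicate parts. A further subtlety is that the approximate functional equation must be chosen so that the transition region does not destroy the power saving; I would likely use a Mellin--Barnes representation with carefully chosen test functions to keep the dependence on \(\chi\) explicit throughout.
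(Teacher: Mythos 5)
Your overall outline --- approximate functional equation for \(L(s,\chi)^2\), term-wise integration over \(t\), diagonal main term, off-diagonal shifted convolution \(\sum_n \chi(n)\overline\chi(n+h)\tau(n)\tau(n+h)\), Voronoi summation, then spectral methods --- matches the paper's strategy. But the step you pass over in one clause, ``apply the Kuznetsov trace formula on \(\Gamma_0(q^2)\) (or a suitable level) to the resulting sum of Kloosterman sums,'' is exactly where the genuine difficulty of the problem sits, and as stated it does not go through. After Voronoi summation one is left with expressions of the shape
\[
  \sum_{a \bmod q} \chi(a) \overline{\chi}(m - a) \sum_{ (c, q) = 1 } S( \overline c^{\,2} h, a; q )\, S( \overline q^{\,2} h, m; c )\, F(c),
\]
and it is not known how to bring this directly into a form where the Kuznetsov formula applies: the inner Kloosterman sum to modulus \(c\) is entangled with a second Kloosterman sum to modulus \(q\) whose argument depends on \(\overline c^{\,2}\), and with a character sum over \(a\). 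The paper's resolution is to expand \(S(\overline c^{\,2} h, a; q)\) into Dirichlet characters \(\psi \bmod q\) via Gau{\ss} sums, \(S( \overline c^{\,2} h, a; q ) = \phi(q)^{-1} \sum_{\psi} \psi(c)^2 \overline\psi(ha) G(\psi)^2\), which separates \(a\) from \(c\) and produces sums of twisted Kloosterman sums \(S_{\psi^2}\) amenable to Kuznetsov with nebentypus; the price is a factor of roughly \(q^{1/2}\) that propagates into the final error term, and the averaging over \(\psi\) then requires bespoke large-sieve inequalities (averaged over the nebentypus) to avoid a further loss. Your sketch contains no substitute for this mechanism, so the spectral decomposition you invoke is not available as described.

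A secondary inaccuracy: you attribute the error term \(q T^{2/3 + \eps}\) to Weil-bound estimates for short sums of Kloosterman sums ``mimicking the Zavorotnyi error.'' In fact this term arises purely from removing the smooth weight: the smooth moment (Theorem~\ref{102}) carries no \(T^{2/3}\) term at all, and in the sharp-cutoff version the \(T^{2/3}\) appears from optimising the smoothing parameter \(\Omega\) against the term \(\Omega^{-1/2} T^{1/2}\) in the smoothed error. Relatedly, the exponent \(2 - 3\theta\) of \(q\) in the first error term is not explained by Kim--Sarnak alone; it reflects the \(q^{1/2}\) loss from the Gau{\ss}-sum detour described above, so your sketch would not predict the correct \(q\)-dependence even if the spectral step were repaired.
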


Here \(\theta\) denotes the bound in the Ramanujan-Petersson conjecture (see \cref{31} for a precise definition).
By the work of Kim and Sarnak~\cite{Kim03} it is known that \( \theta = 7/64 \)~is admissible, and with this value our asymptotic formula is non-trivial in the range~\( q \ll T^{ 25 / 107 - \eps } \).
The polynomial~\( P_\chi \) appearing in the main term can be described fairly explicitly in form of a residue (see~\eqref{5508}).
In particular, its leading coefficient is given by
\[ \frac1{2\pi^2} \frac{ \phi(q)^2 }{q^2} \prod_{p \mid q} \left( 1 - \frac2{p + 1} \right). \]
This constant also appears as leading coefficient in the polynomials in~\eqref{1030} and~\eqref{1052}, and is identical to the constant~\( C(q) \) in~\eqref{1060}.
With a couple of minor technical modifications in the proof, \cref{101} can be extended to all Dirichlet characters.

A similar formula holds if we replace the sharp integration bounds in~\eqref{1019} by a smooth weight function.

\begin{theorem} \label{102}
  Let~\( \eps > 0 \).
  Let~\( w : (0, \infty) \to \CC \) be a smooth and compactly supported function.
  Let \( \chi \)~mod~\(q\) be a primitive Dirichlet character.
  Then we have, for~\( T \geq 1 \),
  \[ \int \left| L\left( \tfrac12 + \i t, \chi \right) \right|^4 w\left( \frac tT \right) \d t = \int \! P_\chi (\log t) w\left( \frac tT \right) \, \d t + \LO{ q^{2 - 3\theta} T^{\frac12 + \theta + \eps} }, \]
  where \( P_\chi \) is the same polynomial as in~\eqref{1096}, and where the implicit constant depends only on~\(w\) and~\(\eps\).
\end{theorem}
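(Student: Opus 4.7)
The plan is to reduce the fourth moment to a second moment of \(L(s,\chi)^2\) via the identity \(|L(\tfrac12+\i t,\chi)|^4 = L(\tfrac12+\i t,\chi)^2\,\overline{L(\tfrac12+\i t,\chi)^2}\), and then to analyse the resulting shifted convolution sums. I would begin by applying a standard approximate functional equation to \(L(s,\chi)^2\), using its Dirichlet series \(\sum_n d(n)\chi(n) n^{-s}\), to replace each of the two factors by a smoothly truncated polynomial of length \(\asymp qT\). After multiplying the two representations together and integrating against \(w(t/T)\), the \(t\)-integration produces a smooth weight \(W(m,n;T)\) essentially supported on \(|m-n|\ll \max(m,n)/T\), and the expression splits naturally into a diagonal part \(m=n\) and an off-diagonal part \(m\neq n\).

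The diagonal contribution is extracted by Mellin inversion. The relevant Dirichlet series
\[
D_\chi(s) = \sum_n \frac{d(n)^2 |\chi(n)|^2}{n^{1+s}} = \frac{\zeta(1+s)^4}{\zeta(2+2s)} \prod_{p\mid q}\frac{(1-p^{-1-s})^3}{1+p^{-1-s}}
\]
has a pole of order~\(4\) at \(s=0\), and shifting the contour past this pole produces a term of the shape \(\int P_\chi(\log t)\, w(t/T)\,\d t\) with \(P_\chi\) of degree~\(4\); evaluating the residue via the local Euler factors recovers the claimed leading coefficient.

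The off-diagonal contribution reduces to a weighted sum of shifted convolutions
\[
S_h = \sum_n d(n)\, d(n+h)\, \chi(n)\, \bar\chi(n+h)\, F_T(n,h)
\]
over nonzero integer shifts \(h\) with \(|h|\ll q\). For each such \(h\), I would open one of the divisor functions, apply Voronoi summation on residue classes modulo~\(q\) to the remaining variable, and recognise the resulting expression as a sum of Kloosterman sums weighted by a Bessel-type integral transform. Summing over~\(h\) and invoking the Kuznetsov trace formula at level~\(q\) converts the off-diagonal into a spectral average of Hecke eigenvalues over the Maass, holomorphic and Eisenstein spectrum of \(\Gamma_0(q)\). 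A careful newform/oldform decomposition combined with the spectral large sieve of Deshouillers--Iwaniec should give the tempered part of this sum as \(\LO{q T^{1/2+\eps}}\), while any exceptional Maass eigenvalue with spectral parameter \(\nu_j\in(0,\theta]\) supplies the extra factor \(q^{-3\theta} T^{\theta}\) that reflects the failure of Ramanujan, yielding the claimed bound.

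The main technical obstacle is obtaining this optimal \(q\)-dependence uniformly. Both the Voronoi transformation and the spectral large sieve have to be performed at level~\(q\) with sharp uniformity, which requires carefully tracking Kloosterman sums whose moduli are divisible by~\(q\) and peeling off oldform contributions coming from divisors of~\(q\). The Eisenstein part is particularly delicate, since it involves products of Dirichlet \(L\)-functions that must be controlled uniformly in both the spectral parameter and in the character; any loss there would immediately worsen the range of validity beyond \(q\ll T^{25/107-\eps}\). Getting the exponent of \(q\) in the error down to exactly \(2-3\theta\) — rather than, say, \(2\) with an unspecified loss from exceptional spectrum — is the crux of the argument.
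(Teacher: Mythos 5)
Your overall architecture (approximate functional equation for \(L(s,\chi)^2\), diagonal versus off-diagonal split after \(t\)-integration, Voronoi summation and the Kuznetsov formula on the shifted convolution sums \(\sum_n \tau(n)\tau(n+h)\chi(n)\overline\chi(n+h)\)) matches the paper, and your diagonal Dirichlet series and its residue are correct. But there is a genuine gap at the decisive step, which you pass over in one clause: "recognise the resulting expression as a sum of Kloosterman sums \dots and invoking the Kuznetsov trace formula at level \(q\)." After Voronoi summation in residue classes modulo \(nq\) (not modulo \(q\) alone -- the congruence \(n_1 n_2 \equiv n_1 a - h\) forces moduli divisible by the complementary divisor variable), the off-diagonal does not arrive in a form to which Kuznetsov applies. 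What one actually obtains is a structure of the shape
\[ \sum_{a \bmod q} \chi(a)\,\overline{\chi}(m-a) \sum_{(c,q)=1} S\left( \overline{c}^{\,2} h, a; q \right) S\left( \overline{q}^{\,2} h, m; c \right) F(c), \]
where the character sum over \(a\) is entangled with a Kloosterman sum of modulus \(q\) whose arguments involve \(\overline{c}^{\,2}\), while the Kloosterman sum of modulus \(c\) carries the argument \(\overline{q}^{\,2} h\). No choice of group, level and cusps makes the inner \(c\)-sum directly a Kloosterman-sum side of a trace formula. The paper's resolution is to expand \(S(\overline{c}^{\,2}h, a; q)\) into Dirichlet characters, \(S(\overline{c}^{\,2}h,a;q) = \phi(q)^{-1}\sum_{\psi \bmod q}\psi(c)^2\overline{\psi}(ha)G(\psi)^2\), which separates \(a\) from \(c\) and turns the \(c\)-sum into a twisted Kloosterman sum with nebentypus \(\psi^2\), amenable to Kuznetsov; this in turn forces averaged-over-\(\psi\) versions of the large sieve inequalities (to avoid losing a further \(q^{1/2}\) from the nebentypus in the Kloosterman-sum bounds). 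Without this idea, or a substitute for it, your off-diagonal analysis does not go through.

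A consequence of this gap is that your quantitative claims are unsubstantiated and in fact too strong. The Gau\ss-sum expansion above costs an unavoidable factor of order \(q^{1/2}\), which is precisely why the theorem's error term is \(q^{2-3\theta}T^{1/2+\theta+\eps}\): at \(\theta=0\) this is \(q^{2}T^{1/2+\eps}\), not the \(qT^{1/2+\eps}\) you assert for the tempered spectrum. Your heuristic that the exceptional eigenvalues "supply the extra factor \(q^{-3\theta}T^{\theta}\)" on top of \(qT^{1/2+\eps}\) therefore does not reproduce the stated bound, and proving \(qT^{1/2+\eps}\) for the tempered part would be an improvement over the paper that the author explicitly flags as open. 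Two smaller omissions: the off-diagonal terms also contribute to the main term through the Voronoi main term (the polynomial \(P_\chi\) is not produced by the diagonal alone -- only its two leading coefficients are), and the "dual" terms of the approximate functional equation carrying the factor \(\alpha_\chi(\tfrac12+\i t)(n_1n_2)^{\i t}\) must be shown negligible by a separate stationary-phase argument, which your \(t\)-integration step as described does not cover.
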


An interesting generalization of~\eqref{1019} concerns the mixed moment
\begin{equation} \label{1045}
  \int_1^T \left| L\left( \tfrac12 + \i t, \chi_1 \right) \right|^2 \left| L\left( \tfrac12 + \i t, \chi_2 \right) \right|^2 \d t,
\end{equation}
where \( \chi_1 \)~and~\( \chi_2 \) are two different primitive Dirichlet characters.
In general, it is expected that the behaviour of the two Dirichlet \(L\)-functions~\( L(s, \chi_1) \) and~\( L(s, \chi_2) \) on the critical line is uncorrelated, which should also find its expression in a slightly different asymptotic behaviour of~\eqref{1045} compared with~\eqref{1019}.
Specifically, heuristical considerations suggest that the mixed moment~\eqref{1045} should have a leading term of the order of~\( T (\log T)^2 \) instead of~\( T (\log T)^4 \) (see~\cite{MT14} for a discussion of this phenomenon in a more general context).

This is indeed the case as our next result confirms.

\begin{theorem} \label{103}
  Let~\( \eps > 0 \).
  Let \( \chi_1 \)~mod~\(q_1\) and \( \chi_2 \)~mod~\(q_2\) be two different primitive Dirichlet characters, and let
  \begin{equation} \label{1092}
    q_1^\star := \left( q_1, {q_2}^\infty \right) / (q_1, q_2) \qquad \text{and} \qquad q_2^\star := \left( q_2, {q_1}^\infty \right) / (q_1, q_2).
  \end{equation}
  Then we have, for~\( T \geq 1 \),
  \begin{align}
    \begin{split} \label{1022}
      \int_1^T &\left| L\left( \tfrac12 + \i t, \chi_1 \right) \right|^2 \left| L\left( \tfrac12 + \i t, \chi_2 \right) \right|^2 \d t = \int_1^T \! P_{\chi_1, \chi_2}(\log t) \, \d t \\
        &\qquad\qquad + \LO{ ( q_1^\star q_1 + q_2^\star q_2 )^\frac12 (q_1 q_2)^{ \frac34 - \frac32\theta } T^{ \frac12 + \theta + \eps } + ( q_1^\star q_1 + q_2^\star q_2 )^\frac13 (q_1 q_2)^\frac13 T^{ \frac23 + \eps } },
    \end{split}
  \end{align}
  where \( P_{\chi_1, \chi_2} \) is a quadratic polynomial, whose coefficients depend only on~\( \chi_1 \) and~\(\chi_2\), and where the implicit constant depends only on~\(\eps\).
\end{theorem}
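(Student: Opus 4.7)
The plan is to follow the same overall framework as for \cref{101}, adapting each step to two distinct primitive characters. I would first establish a smooth-weight version of~\eqref{1022} in the style of \cref{102}, and then recover the sharp-cutoff statement via a standard Mellin/contour argument. Starting from a Mellin--Barnes representation (or equivalently an approximate functional equation), the integrand in~\eqref{1045} is expressed as a double Dirichlet series in \((m, n)\) with coefficients \(d_{\chi_1, \chi_2}(m) \overline{d_{\chi_1, \chi_2}(n)} (mn)^{-1/2}\), where
\[ d_{\chi_1, \chi_2}(n) = \sum_{ab = n} \chi_1(a) \chi_2(b). \]
The \(t\)-integration then essentially localises to \(m \asymp n\), and after separating the diagonal contribution \(m = n\), the problem reduces to bounding an off-diagonal shifted convolution sum of the form
\[ \sum_{\substack{m - n = h \\ m, n \ll T^{1 + \eps}}} d_{\chi_1, \chi_2}(m) \, \overline{d_{\chi_1, \chi_2}(n)} \, W(m, n) \]
over a range of nonzero shifts \(h\), with a smooth weight \(W\) localising the variables.

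The core of the argument is the estimation of this off-diagonal shifted convolution. Expanding \(d_{\chi_1, \chi_2}\) through its definition introduces two congruence conditions modulo \(q_1\) and \(q_2\). A \(\delta\)-symbol or circle-method step, followed by reciprocity, transforms the sum into an expression involving Kloosterman sums whose moduli are multiples of the least common multiple of \(q_1\) and \(q_2\); crucially, the natural conductor parameters that emerge from this reduction are precisely \(q_1^\star q_1\) and \(q_2^\star q_2\) from~\eqref{1092}. An application of the Kuznetsov formula then converts the Kloosterman expression into a spectral sum over Maass forms of level dividing \([q_1, q_2]\), where a large sieve inequality combined with the Kim--Sarnak bound \(\theta = 7/64\) produces the first error term in~\eqref{1022}; the second error term arises from an alternative estimation that is more efficient in a complementary range of parameters, in analogy with the two-term error in \cref{101}.

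The main term, a polynomial of degree two in \(\log t\), is extracted from the diagonal contribution \(m = n\). The relevant Dirichlet series \(\sum_n d_{\chi_1, \chi_2}(n) \overline{d_{\chi_1, \chi_2}(n)} n^{-s}\) admits an Euler product that factors as \(\zeta(s)^2 L(s, \chi_1 \bar\chi_2) L(s, \bar\chi_1 \chi_2)\) times a finite correction at primes dividing \(q_1 q_2\). Since \(\chi_1 \neq \chi_2\) are both primitive, the product \(\chi_1 \bar\chi_2\) is non-principal, so this series has only a double pole at \(s = 1\) (rather than the quadruple pole present in the setting of \cref{101}), and residue extraction at this pole yields the quadratic polynomial \(P_{\chi_1, \chi_2}\). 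The principal obstacle is tracking the uniformity in the pair \((q_1, q_2)\) through every step of the spectral side, and verifying that the Kuznetsov-plus-large-sieve step produces precisely the combination \((q_1^\star q_1 + q_2^\star q_2)^{1/2} (q_1 q_2)^{3/4 - 3\theta/2}\); this requires a careful treatment of the non-multiplicative interaction between the two moduli, which is exactly why the parameters \(q_1^\star, q_2^\star\) intervene rather than just \(q_1\) and \(q_2\) themselves. The passage from the smooth version to the sharp cutoff~\eqref{1022} is then standard, writing the indicator of \([1, T]\) as a contour integral of \(t^s\) and shifting past \(s = 0\) to collect the main term plus an error controlled by the smooth-version estimate.
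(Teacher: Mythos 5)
Your high-level skeleton (approximate functional equation, diagonal versus off-diagonal, shifted convolution, Kuznetsov plus large sieve) matches the paper's, but there are three concrete gaps. First, you treat the off-diagonal sum purely as an error to be \emph{bounded}. In fact the shifted convolution sum carries its own main term (\cref{401}), which after summation over the shifts produces a second quadratic polynomial \( P_{\chi_1,\chi_2}^{(2)} \) of size comparable to the lower-order part of the diagonal contribution (\cref{523}, \cref{55}); only the combination \( P^{(1)} + P^{(2)} \) is independent of the artificial cutoff \(V\) in the approximate functional equation. Extracting the main term from the diagonal alone would leave a \(V\)-dependent ``asymptotic'' that is simply wrong at the level of the linear and constant coefficients. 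Second, the crucial obstacle in the off-diagonal analysis is not merely ``tracking uniformity'': after Voronoi summation (or your \(\delta\)-method variant) one faces sums of products of Kloosterman sums as in~\eqref{1070}, to which the Kuznetsov formula does not directly apply. The paper's way around this is to expand the Kloosterman sum to modulus \(q\) in Dirichlet characters via Gau{\ss} sums as in~\eqref{1078} (\cref{421}), at the cost of a factor of size \(q^{1/2}\), and to compensate by proving character-averaged versions of the Kloosterman bound and of the large sieve (\cref{3224}, \cref{341}, \cref{342}). Your proposal contains no substitute for this step, and without it the spectral part of the argument does not get off the ground; it is also exactly this step that produces the specific exponents in the first error term, which you would otherwise have no way to verify.

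Third, the second error term \( (q_1^\star q_1 + q_2^\star q_2)^{1/3}(q_1q_2)^{1/3} T^{2/3+\eps} \) does not come from ``an alternative estimation in a complementary range''. It is entirely an unsmoothing artifact: the paper sandwiches the sharp-cutoff integral between two smoothed integrals \( I_{\chi_1,\chi_2}(w^\pm) \) whose weights have derivatives of size \( (\Omega T_0)^{-\nu} \), incurs an error \( \Omega T \) from the transition regions plus a term \( \Omega^{-1/2}\cdot(\cdots)\cdot T^{1/2} \) from \( E_{\chi_1,\chi_2} \) in~\eqref{5057}, and optimizes \(\Omega\). Your proposed Perron-type contour integral for the indicator of \([1,T]\) is not the mechanism used and, as described, does not obviously produce the stated exponent \(2/3\); you would need to carry out the majorant/minorant optimization explicitly to recover~\eqref{1022}.
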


As before, the polynomial~\( P_{\chi_1, \chi_2} \) appearing in the main term can be stated explicitly (see~\eqref{5583} and~\eqref{5576}).
Its leading coefficient is given by
\[ \frac6{\pi^2} \left| L( 1, \overline{\chi_1} \chi_2 ) \right|^2 \frac{ \phi(q_1) \phi(q_2) }{ \phi(q_1 q_2) } \prod_{ p \mid q_1 q_2 } \left( 1 - \frac1{p + 1} \right). \]
On a side note, this result also shows that for a given primitive, non-real Dirichlet character~\(\chi\) there is no correlation between the functions~\( L(1/2 + \i t, \chi) \) and~\( L(1/2 - \i t, \chi) \).

The analogue of \cref{103} for the smooth moment reads as follows.

\begin{theorem} \label{104}
  Let~\( \eps > 0 \).
  Let~\( w : (0, \infty) \to \CC \) be a smooth and compactly supported function.
  Let \( \chi_1 \)~mod~\(q_1\) and \( \chi_2 \)~mod~\(q_2\) be two different primitive Dirichlet characters, and let \( q_1^\star \) and~\( q_2^\star \) be defined as in~\eqref{1092}.
  Then we have, for~\( T \geq 1 \),
  \begin{multline*}
    \int \left| L\left( \tfrac12 + \i t, \chi_1 \right) \right|^2 \left| L\left( \tfrac12 + \i t, \chi_2 \right) \right|^2 w\left( \frac tT \right) \, \d t = \int \! P_{\chi_1, \chi_2}(\log t) w\left( \frac tT \right) \, \d t \\
      + \LO{ ( q_1^\star q_1 + q_2^\star q_2 )^\frac12 (q_1 q_2)^{ \frac34 - \frac32\theta } T^{ \frac12 + \theta + \eps } },
  \end{multline*}
  where \( P_{\chi_1, \chi_2} \) is the same polynomial as in~\eqref{1022}, and where the implicit constant depends only on~\(w\) and~\(\eps\).
\end{theorem}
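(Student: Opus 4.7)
\cref{104} is the smooth-weight version of \cref{103}, and I expect it is actually the main technical statement from which \cref{103} is deduced by approximating the sharp indicator of $[1,T]$ by a smooth bump (the resulting approximation cost accounts for the extra $(q_1^\star q_1 + q_2^\star q_2)^{1/3} (q_1 q_2)^{1/3} T^{2/3 + \eps}$ term in~\eqref{1022}). The plan is therefore to prove \cref{104} directly. I would begin by applying an approximate functional equation separately to each factor
\[ |L(1/2+\i t, \chi_j)|^2 = L(1/2+\i t, \chi_j) L(1/2-\i t, \overline{\chi_j}), \]
expressing it as a smoothed sum
\[ \sum_{m_j, n_j} \frac{\chi_j(m_j) \overline{\chi_j}(n_j)}{\sqrt{m_j n_j}} \left( \frac{n_j}{m_j} \right)^{\i t} V_j(m_j n_j; t), \]
with weights $V_j$ cutting off at $m_j n_j \lesssim q_j(1 + |t|)$. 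Multiplying the two expressions, pairing against $w(t/T)$, and integrating in $t$ gives a quadruple sum whose $t$-integral is negligible unless $|m_1 m_2 - n_1 n_2| \lesssim (m_1 m_2 n_1 n_2)^{1/2}/T$ (up to log powers), since the oscillating factor is $(n_1 n_2 / m_1 m_2)^{\i t}$.

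The diagonal contribution $m_1 m_2 = n_1 n_2$ yields the polynomial main term: a Mellin analysis of the resulting Dirichlet series, whose leading singularities involve $L(s, \chi_1 \overline{\chi_2})$ and $L(s, \overline{\chi_1} \chi_2)$ at $s = 1$, produces a polynomial $P_{\chi_1, \chi_2}$ of degree $2$ (rather than $4$, reflecting the absence of correlation between two distinct characters) with leading coefficient as stated in the paragraph after~\eqref{1022}. The off-diagonal contribution, indexed by $h = m_1 m_2 - n_1 n_2 \neq 0$, reduces after separation of variables to shifted convolution sums
\[ \sum_{h \neq 0} \sum_{\substack{m_1, m_2, n_1, n_2 \\ m_1 m_2 - n_1 n_2 = h}} \frac{\chi_1(m_1) \overline{\chi_1}(n_1) \chi_2(m_2) \overline{\chi_2}(n_2)}{\sqrt{m_1 m_2 n_1 n_2}} \, W(m_1, m_2, n_1, n_2; h), \]
with $W$ a smooth weight localizing $m_j n_j \lesssim q_j T$ and $|h| \lesssim (q_1 q_2)^{1/2} T^\eps$.

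These shifted convolution sums form the heart of the argument and would be treated spectrally, following the general strategy of Motohashi~\cite{Mot97a} and Young~\cite{You11}. Opening the congruence $m_1 m_2 \equiv n_1 n_2 \pmod{h}$ via Poisson or delta-symbol methods produces sums of Kloosterman sums at a level divisible by $\operatorname{lcm}(q_1, q_2)$; the Kuznetsov trace formula then converts these into spectral sums over the holomorphic, Maass, and Eisenstein spectrum at the relevant level. Combining bounds from the Ramanujan--Petersson conjecture at finite primes (responsible for the $T^\theta$ and $(q_1 q_2)^{-3\theta/2}$ factors) with the spectral large sieve should yield the claimed error. The main obstacle will be tracking the level structure of the automorphic forms precisely enough to obtain the modulus dependence $(q_1^\star q_1 + q_2^\star q_2)^{1/2}$: because the conductors $q_1, q_2$ may share prime powers at some primes and be coprime at others, a twisted newform/oldform decomposition organized around $q_1^\star$ and $q_2^\star$ (which measure exactly the ``incompatible'' parts of $q_1$ and $q_2$) is needed to keep the final error free of any spurious dependence on $(q_1, q_2)$, and to avoid a loss proportional to $q_1 q_2 / (q_1, q_2)$ in the level of the relevant automorphic forms.
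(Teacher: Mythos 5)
Your overall architecture --- smooth weight, approximate functional equation, diagonal main term, off-diagonal reduced to shifted convolutions with shifts $|h| \ll (q_1 q_2)^{1/2} T^\eps$, spectral treatment via Kuznetsov and the large sieve --- agrees with the paper, which indeed derives both \cref{103} and \cref{104} from a single smoothed statement (\cref{501}, with a smoothness parameter $\Omega$ that must be kept general to produce the $T^{2/3}$-term in \cref{103}; the case $\Omega = 1$ gives \cref{104}). A cosmetic difference: the paper applies one approximate functional equation to the product $L_{\chi_1,\chi_2}(s) = L(s,\chi_1) L(s,\chi_2)$, so the off-diagonal object is $\sum_n \tau_{\chi_1,\chi_2}(n) \tau_{\overline{\chi_1},\overline{\chi_2}}(n+h)$ rather than your quadruple sum; these are essentially the same problem.

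The genuine gap is at the step you treat as routine: ``the Kuznetsov trace formula then converts these into spectral sums.'' After opening one divisor function and applying Voronoi summation in the other, the sums of Kloosterman sums that arise have the shape \eqref{1070}: the modulus $c$ is entangled with a character sum over residues $a \bmod q$ through factors like $S(\overline{c}^2 h, a; q)$, and the paper states explicitly that it is not clear in what form the Kuznetsov formula applies to this configuration. Its workaround is the identity \eqref{1078}: expanding $S(\overline{c}^2 h, a; q)$ into Dirichlet characters $\psi$ via Gau{\ss} sums (carried out in \cref{421}) separates $a$ from $c$ and turns the $c$-sum into Kloosterman sums twisted by $\psi^2$, i.e., sums amenable to the Kuznetsov formula with nebentypus --- at the cost of a factor of size $q^{1/2}$, which is precisely the source of the exponent $1/2$ on $q_1^\star q_1 + q_2^\star q_2$ in the error term. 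One must then also average the large sieve inequalities over the nebentypus (\cref{3224}, \cref{341}, \cref{342}) to avoid losing a further $(q_1 q_2)^{1/4}$. Your proposed mechanism, a newform/oldform decomposition organized around $q_1^\star$ and $q_2^\star$, addresses a different issue (comparing Fourier coefficients across levels) and does not remove the obstruction to applying Kuznetsov in the first place; without an idea at this juncture the plan cannot be executed, and it also does not predict the shape of the modulus dependence in the error term. (A smaller inaccuracy: the $T^\theta$ arises from exceptional Maa{\ss} eigenvalues, handled by a weighted large sieve, not from the Ramanujan--Petersson bound at finite places.)
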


A certain special case of \cref{103} deserves its own mention.
If~\(K\) is a quadratic number field with discriminant~\(D\), then it is well-known that the Dedekind zeta function~\( \zeta_K(s) \) associated to~\(K\) has the form
\[ \zeta_K(s) = \zeta(s) L(s, \chi_D), \]
where \(\chi_D\) is a certain real primitive Dirichlet character of modulus~\( |D| \).
Hence, by applying \cref{103} on this product of Dirichlet \(L\)-functions, we get the following asymptotic formula for the second moment of~\( \zeta_K \) on the critical line.

\begin{theorem} \label{105}
  Let~\( \eps > 0 \).
  Let \(K\) be a quadratic number field with discriminant~\(D\).
  Then we have, for~\( T \geq 1 \),
  \begin{equation} \label{1034}
    \int_1^T \left| \zeta_K\left( \tfrac12 + \i t \right) \right|^2 \d t = \int_1^T \! P_K(\log t) \, \d t + \LO{ |D|^\frac23 T^{\frac23 + \eps} + |D|^{ \frac54 -\frac 32 \theta } T^{ \frac12 + \theta + \eps } },
  \end{equation}
  where \( P_K \) is a quadratic polynomial, whose coefficients depend only on the field~\( K \), and where the implicit constant depends only on~\(\eps\).
\end{theorem}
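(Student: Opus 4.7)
The plan is to deduce \cref{105} as an immediate corollary of \cref{103}, applied to the classical factorization $\zeta_K(s) = \zeta(s) L(s, \chi_D)$. This gives
\[
\left| \zeta_K\left( \tfrac12 + \i t \right) \right|^2 = \left| \zeta\left( \tfrac12 + \i t \right) \right|^2 \left| L\left( \tfrac12 + \i t, \chi_D \right) \right|^2,
\]
so the integrand on the left-hand side of~\eqref{1034} is precisely the mixed fourth moment treated in \cref{103}, with $\chi_1$ taken to be the trivial Dirichlet character of modulus $q_1 = 1$ (so that $L(s, \chi_1) = \zeta(s)$) and $\chi_2 = \chi_D$ of modulus $q_2 = |D|$. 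Both are primitive, and since $\chi_D$ is non-trivial they are distinct, so the hypotheses of \cref{103} are satisfied.

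It then only remains to carry out the bookkeeping for the error term. Because $q_1 = 1$, the definitions in~\eqref{1092} immediately collapse to $q_1^\star = q_2^\star = 1$. Substituting into the error bound in~\eqref{1022}, the first error term becomes
\[
(1 + |D|)^{1/2} \, |D|^{3/4 - 3\theta/2} \, T^{1/2 + \theta + \eps} \ll |D|^{5/4 - 3\theta/2} T^{1/2 + \theta + \eps},
\]
and the second becomes
\[
(1 + |D|)^{1/3} \, |D|^{1/3} \, T^{2/3 + \eps} \ll |D|^{2/3} T^{2/3 + \eps},
\]
which together match the error term in~\eqref{1034}. Setting $P_K := P_{\chi_1, \chi_2}$ yields a polynomial of degree~$2$ whose coefficients depend only on~$\chi_D$, and therefore only on the field~$K$.

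Since the reduction is essentially automatic, there is no substantive obstacle to surmount; the only point worth verifying explicitly is that \cref{103} truly applies when one of the moduli equals~$1$. This is a trivial check, as the statement (and proof) of \cref{103} treats arbitrary pairs of distinct primitive Dirichlet characters without any exclusion of the trivial character of modulus~$1$, and the degenerate formulas for $q_1^\star$ and~$q_2^\star$ above make good sense. One could also specialize the leading coefficient formula given after \cref{103} to recover the explicit main term for~$P_K$, but this is not required for the asymptotic estimate~\eqref{1034}.
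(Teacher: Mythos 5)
Your proposal is correct and is exactly the paper's argument: Theorem~\ref{105} is obtained by applying Theorem~\ref{103} to the factorization \( \zeta_K(s) = \zeta(s) L(s, \chi_D) \) with \( q_1 = 1 \), \( q_2 = |D| \), for which \( q_1^\star = q_2^\star = 1 \) and the error terms specialize as you computed. The bookkeeping checks out, so there is nothing to add.
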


This improves on previous results by Motohashi~\cite{Mot70}, Hinz~\cite{Hin79} and M{\"u}ller~\cite{Mue89}.
With the current best value for~\(\theta\), the asymptotic formula is non-trivial as long as~\( |D| \ll T^{ 50/139 - \eps } \).
The leading constant of~\( P_K \) is
\[ \frac6{\pi^2} \left| L( 1, \chi_D ) \right|^2 \prod_{p \mid D} \left( 1 - \frac1{p + 1 } \right). \]

We also want to formulate the analogue of \cref{105} for the smooth moment.

\begin{theorem} \label{106}
  Let~\( \eps > 0 \).
  Let~\( w : (0, \infty) \to \CC \) be a smooth and compactly supported function.
  Let \(K\) be a quadratic number field with discriminant~\(D\).
  Then we have, for~\( T \geq 1 \),
  \[ \int \left| \zeta_K\left( \tfrac12 + \i t \right) \right|^2 w\left( \frac tT \right) \, \d t = \int \! P_K(\log t) w\left( \frac tT \right) \, \d t + \LO{ |D|^{ \frac54 -\frac 32 \theta } T^{ \frac12 + \theta + \eps } }, \]
  where \( P_K \) is the same polynomial as in~\eqref{1034}, and where the implicit constant depends only on~\(w\) and~\(\eps\).
\end{theorem}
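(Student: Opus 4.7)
The plan is to derive \cref{106} as a direct corollary of \cref{104} by specializing one of the two characters to be the trivial character of modulus~\(1\), exactly as \cref{105} is obtained from \cref{103}. The factorization \(\zeta_K(s) = \zeta(s) L(s, \chi_D)\), with \(\chi_D\) the real primitive Dirichlet character of modulus~\(|D|\) attached to~\(K\), gives
\[ \left| \zeta_K\left( \tfrac12 + \i t \right) \right|^2 = \left| \zeta\left( \tfrac12 + \i t \right) \right|^2 \left| L\left( \tfrac12 + \i t, \chi_D \right) \right|^2. \]
Since \(\zeta(s) = L(s, \chi_0)\), where \(\chi_0\) is the (necessarily primitive) trivial character of modulus~\(1\), the smoothly weighted second moment of~\(\zeta_K\) is precisely the mixed moment treated in \cref{104} with \(\chi_1 = \chi_0\) and \(\chi_2 = \chi_D\).

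Next I would simplify the error term under this specialization. Setting \(q_1 = 1\) and \(q_2 = |D|\) in the definition~\eqref{1092}, both quantities \(q_1^\star\) and~\(q_2^\star\) equal~\(1\): the first because \((1, |D|^\infty) = 1\), and the second because \(1\) has no prime factors, so \((|D|, 1^\infty) = 1\) as well. Consequently \(q_1^\star q_1 + q_2^\star q_2 = 1 + |D| \ll |D|\), and the error supplied by \cref{104} collapses to
\[ |D|^{\frac12} \cdot |D|^{\frac34 - \frac32 \theta} T^{\frac12 + \theta + \eps} \ll |D|^{\frac54 - \frac32 \theta} T^{\frac12 + \theta + \eps}, \]
which matches the bound claimed in the statement.

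For the main term, the polynomial supplied by \cref{104} is \(P_{\chi_0, \chi_D}\), given by the explicit residue expressions~\eqref{5583} and~\eqref{5576}. The polynomial \(P_K\) appearing in \cref{106} is, by definition, the same polynomial appearing in \cref{105}, which itself arises from \cref{103} via exactly this specialization. Hence \(P_K = P_{\chi_0, \chi_D}\) tautologically, and no further identification is required.

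The main---and essentially only---point that must be checked is that the proof of \cref{104} genuinely accommodates the degenerate case \(q_1 = 1\), that is, \(\chi_1\) trivial. This is routine: every step of that argument where primitivity or positivity of the modulus is used must be seen to reduce correctly when one of the moduli is~\(1\), but the relevant formulae (local factors, Gauss sums, character sum manipulations) all specialize in the obvious way. Once this bookkeeping is noted, \cref{106} follows immediately from \cref{104}.
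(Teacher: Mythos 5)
Your proposal is correct and matches the paper's own route: the paper obtains Theorem~\ref{106} by specializing its general mixed-moment result (Proposition~\ref{501}, of which Theorem~\ref{104} is the smooth-weight instance) to \(\chi_1\) the trivial primitive character mod~\(1\) and \(\chi_2 = \chi_D\), exactly as you describe, and your computation \(q_1^\star = q_2^\star = 1\) giving the exponent \(\tfrac54 - \tfrac32\theta\) is the same bookkeeping. Nothing further is needed.
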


We did not attempt to establish explicit formulae of the type Motohashi established for~\( \zeta(s) \), as this would have further complicated many of the already complicated estimations done in the proof.
Nevertheless, it would certainly be interesting to develop such identites for the moments considered here, in particular for the fourth moment of Dirichlet \(L\)-functions.
In fact, for the second moment of Dedekind zeta functions of quadratic number fields, an explicit formula has been worked out by Motohashi~\cite{Mot93, Mot97b} (see also~\cite{BHKM19, BM01, BM03} for other related results).

We now proceed to give an overview of the proof of our results, focusing here on \cref{101}.
For the most part, we follow rather classical paths, taken in similar forms in many of the works cited above.
By the use of a suitable approximate functional equation for the square~\( L(s, \chi)^2 \), we express the quantity~\( \left| L\left( 1/2 + \i t, \chi \right) \right|^4 \) as a finite double Dirichlet series of roughly the form
\[ \sum_{ n_1, n_2 \ll q T } \!\!\! \frac{ \chi(n_1) \overline\chi(n_2) \tau(n_1) \tau(n_2) }{ (n_1 n_2)^\frac12 } \left( \frac{n_2}{n_1} \right)^{\i t} + \alpha_\chi\left( \tfrac12 + \i t \right) \!\! \sum_{ n_1, n_2 \ll q T } \!\! \frac{ \chi(n_1) \overline\chi(n_2) \tau(n_1) \tau(n_2) }{ (n_1 n_2)^\frac12 } (n_1 n_2)^{\i t}, \]
where \( \tau(n) \) denotes the usual divisor function and where \( \alpha_\chi(s) \) is given by
\[ \alpha_\chi(s) := L(s, \chi)^2 / L(1 - s, \overline\chi)^2. \]
Once this is established, we simply integrate term-wise over~\(t\).
This operation has a localizing effect on the sum on the left, in the sense that only those terms remain where \(n_1\)~and~\(n_2\) are not too far apart, all other terms becoming negligibly small due to the oscillation in~\(t\).
The sum on the right effectively disappears as a whole because of oscillatory effects coming from the two factors~\( \alpha_\chi(1/2 + \i t) \) and~\( (n_1 n_2)^{\i t} \).

Eventually, two different sums remain which we need to estimate.
On the one hand, we have the contribution coming from the diagonal terms~\( n_1 = n_2 \), which takes the shape
\begin{equation} \label{1095}
  \sum_\twoln{ n \ll q T }{ (n, q) = 1 } \frac{ \tau(n)^2  }n,
\end{equation}
and which can be evaluated rather easily, giving rise to the first two leading terms in the final asymptotic formula~\eqref{1096}.
On the other hand, we have the contribution coming from the off-diagonal terms, which -- ignoring here any remaining oscillatory factors -- roughly look as follows,
\[ \sum_\twoln{ n_1, n_2 \ll q T }{ 0 < | n_1 - n_2 | \ll T^{1/3} } \frac{ \chi(n_1) \overline\chi(n_2) \tau(n_1) \tau(n_2) }{ (n_1 n_2)^\frac12 \log( n_2 / n_1 ) }. \]
It also contributes to the main term in the end, although only to the lower order terms.
It is, however, considerably harder to analyze than~\eqref{1095}, and its evaluation forms the actual core of the proof of \cref{101}.

After reordering the terms according to the value of~\( h := n_2 - n_1 \), we arrive at the following type of sums,
\begin{equation} \label{1002}
  \sum_{ n \ll qT } \chi(n) \overline\chi(n + h) \tau(n) \tau(n + h),
\end{equation}
where the parameter~\(h\) can be as large as~\( T^{1/3} \).
This is an instance of the so-called shifted convolution problem, which comes up regularly in the study of the analytic behaviour of \(L\)-functions.
Similar sums also appeared for instance in the works of Heath-Brown~\cite{Hea79} and Young~\cite{You11} cited above.
In our case, it is the presence of the Dirichlet characters which complicates the analysis considerably, leading to several technical difficulties down the road, in particular with regard to the application of spectral methods.

The crucial point in the evaluation of~\eqref{1002} comes after a couple of initial transformations, when we encounter sums of Kloosterman sums of roughly the following form,
\begin{equation} \label{1070}
  \sum_{a \bmod q} \chi(a) \overline{\chi}(m - a) \sum_{ (c, q) = 1 } S( \overline c^2 h, a; q ) S( \overline q^2 h, m; c ) F(c),
\end{equation}
where \(m\) is an integer and where \( F(c) \) is some weight function.
Ideally, at this point one would like to estimate the sum of Kloosterman sums over~\(c\) via the Kuznetsov formula, while also exploiting the cancellation in the character sum over~\(a\).
However, already the first task brings serious difficulties, as it is not clear in which form -- if there is any -- the Kuznetsov formula might be applicable here.

The route we take to solve this problem is to write the first Kloosterman sum in terms of Dirichlet characters as follows (assuming for simplicity that \(h\)~and~\(q\) are coprime),
\begin{equation} \label{1078}
  S( \overline c^2 h, a; q ) = \frac1{ \phi(q) } \sum_{\psi \bmod q} \psi(c)^2 \overline\psi(ha) G(\psi)^2,
\end{equation}
where the sum runs over all Dirichlet characters mod~\(q\), and where \( G(\psi) \) denotes the Gau{\ss} sum associated to~\(\psi\).
The idea underlying this approach goes initially back to Blomer and Mili{\'c}evi{\'c}~\cite{BM15}, and was used in similar forms also in other works (see~\cite{PY19, Top17, Zac16}).
It allows us to separate the two variables \(a\)~and~\(c\) in~\eqref{1070}, while at the same time bringing the sum of Kloosterman sums into a form susceptible to the use of the Kuznetsov formula.

Of course taking this route comes with a cost:
Encoding the Kloosterman sum~\( S( \overline c^2 h, a; q ) \) via Dirichlet characters introduces an additional factor of the size of~\( q^{1/2} \), which we cannot get rid of afterwards and which inevitably turns up in the error term in \cref{101}.

We suspect that there should be a more direct way to employ the Kuznetsov formula on the sum~\eqref{1070}, which avoids the rather artificial detour via~\eqref{1078} taken here.
This might not only lead to an improvement of the error term in \cref{101} in the \(q\)-aspect, but would also prove extremely useful when trying to establish an explicit formula of Motohashi type for the fourth moment of Dirichlet \(L\)-functions (see also the comments in~\cite[pp.~182--183]{Mot97a} on this matter).

\subsection*{Plan}

The article is organized as follows.
In \cref{2}, we introduce the basic notation used throughout the article, and state some technical results related to Dirichlet \(L\)-functions.
In \cref{3}, we briefly present the needed tools from the spectral theory of automorphic forms.
In \cref{4}, we consider the shifted convolution problem lying at the heart of the proof of our results.
Finally, in \cref{5}, we proof Theorems~\ref{101}--\ref{106}.
The last two sections can be read independently of each other.

\subsection*{Acknowledgements}

I would like to thank V.~Blomer, J.~B. Conrey, Y.~Motohashi, R.~M. Nunes and M.~P. Young for valuable discussions and remarks.
In particular, I am grateful to J.~B. Conrey for making me aware of his article~\cite{Con96}, which was very helpful in the evaluation of the main terms in \cref{55}.

\section{Background on Dirichlet \texorpdfstring{\(L\)}{L}-functions} \label{2}

The aim of section is to introduce the basic notation used in the following, and state a couple of technical lemmas related to Dirichlet \(L\)-functions.

\subsection{Notation} \label{21}

We will use the convention that~\(\eps\) denotes a positive real number which can be chosen arbitrarily small and whose value may change at each occurrence.
We write~\( A \asymp B \) to mean~\( A \ll B \ll A \).

We denote the Gau{\ss} sum associated to the Dirichlet character~\( \chi \)~mod~\(q\) by
\[ G(\chi, h) := \sum_{a \bmod q} \chi(a) e\left( \frac{ah}q \right), \]
where as usual~\( e(\xi) := \exp( 2 \pi \i \xi ) \).
We set~\( G(\chi) := G(\chi, 1) \).
Other frequently occurring exponential sums are the Ramanujan sums and Kloosterman sums, for which we will use the notations
\[ r_q(h) := \sum_\twoln{ a \bmod q }{ (a, q) = 1 } e\left( \frac{ah}q \right) = \sum_{ d \mid (h, q) } \mu\left( \frac qd \right) d \qquad \text{and} \qquad S(m, n; q) := \sum_\twoln{a \bmod q}{ (a, q) = 1 } e\left( \frac{ m a + n \overline a }q \right), \]
where \( \overline a \)~indicates a solution to~\( \overline a a \equiv 1 \bmod q \).

Let \( \chi_1 \)~mod~\(q_1\) and \( \chi_2 \)~mod~\(q_2\) be Dirichlet characters, which throughout the article will be assumed to be primitive.
We denote the product of the two Dirichlet \(L\)-functions~\( L(s, \chi_1) \) and~\( L(s, \chi_2) \) by~\( L_{\chi_1, \chi_2}(s) := L(s, \chi_1) L(s, \chi_2) \).
For~\( \Re(s) > 1 \), this function can be written as a Dirichlet series,
\[ L_{\chi_1, \chi_2}(s) = \sum_{n = 1}^\infty \frac{ \tau_{\chi_1, \chi_2}(n) }{n^s} \qquad \text{with} \qquad \tau_{\chi_1, \chi_2}(n) := \sum_{d \mid n} \chi_1(d) \chi_2\left( \frac nd \right). \]
Furthermore, it satisfies the following functional equation,
\[ L_{\chi_1, \chi_2}(s) = \alpha_{\chi_1, \chi_2}(s) L_{ \overline{\chi_1}, \overline{\chi_2} }(1 - s), \]
with \( \alpha_{\chi_1, \chi_2}(s) \) given by
\[ \alpha_{\chi_1, \chi_2}(s) := \frac{ G(\chi_1) G(\chi_2) }{ \pi^2 \i^{ \kappa(\chi_1) + \kappa(\chi_2) } } \left( \frac{4 \pi^2}{q_1 q_2} \right)^s \sin\left( \frac\pi2 \left( s + \kappa(\chi_1) \right) \right) \sin\left( \frac\pi2 \left( s + \kappa(\chi_2) \right) \right) \Gamma(1 - s)^2, \]
where we have set
\begin{equation} \label{2109}
  \kappa(\chi_i) := ( 1 - \chi_i(-1) ) / 2.
\end{equation}

\subsection{Estimates for~\texorpdfstring{\( \alpha_{\chi_1, \chi_2}(s) \)}{alpha(s)} and~\texorpdfstring{\( L_{\chi_1, \chi_2}(s) \)}{L(s)}} \label{22}

We will need rather precise estimates for~\( \alpha_{\chi_1, \chi_2}(s) \) on the critical line.
By using a suitable approximation for the gamma function (see e.g.~\cite[Chapter~5,~(38)]{Ahl79}) we can write this quantity, for~\( |t| \geq 1 \), as
\begin{equation} \label{2263}
  \alpha_{\chi_1, \chi_2}\left( \tfrac12 + \i t \right) = \i \frac{ G(\chi_1) G(\chi_2) }{ (-1)^{ \kappa(\chi_1) + \kappa(\chi_2) } \sqrt{q_1 q_2} } e\left( \frac t\pi \log\left( \frac{2 \pi e}{ t \sqrt{q_1 q_2} } \right) \right) A(t),
\end{equation}
where \( A : \RR \to \CC \)~is a certain smooth function whose derivatives are bounded by
\[ A^{ (\nu) }(t) \ll |t|^{-\nu} \quad \text{for} \quad \nu \geq 0. \]
Note that we also have
\begin{equation} \label{2237}
  \left| \alpha_{\chi_1, \chi_2}\left( \tfrac12 + \i t\right) \right| = 1 \quad \text{for} \quad t \in \RR.
\end{equation}
In the critical strip, the following simple estimate will suffice,
\begin{equation} \label{2284}
  | \alpha_{\chi_1, \chi_2}(\sigma + \i t) | \asymp t^{1 - 2\sigma} (q_1 q_2)^{ \frac12 - \sigma } \qquad \text{for} \qquad \sigma \in [0, 1], \quad |t| \geq 1.
\end{equation}

Concerning~\( L_{\chi_1, \chi_2}(s) \), we have the following hybrid upper bound, which is an immediate consequence of a result by Heath-Brown~\cite{Hea80} and the convexity principle.

\begin{theorem} \label{221}
  Let~\( \eps > 0 \).
  We have, for~\( \sigma \in [0, 1] \) and~\( t \in \RR \) with~\( | \sigma + \i t - 1 | > \eps \),
  \[ L_{\chi_1, \chi_2}(\sigma + it) \ll ( q_1 q_2 )^{ \frac{ 3 (1 - \sigma) }8 + \eps }  ( |t| + 1 )^{ \frac{ 3 (1 - \sigma) }4 + \eps }, \]
  where the implicit constant depends only on~\(\eps\).
\end{theorem}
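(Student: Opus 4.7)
The plan is to combine Heath-Brown's hybrid subconvexity bound on the critical line with the Phragmén--Lindelöf convexity principle, together with the functional equation for the reflection to $\sigma < 1/2$.

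First, I would invoke Heath-Brown's bound~\cite{Hea80}, which states that for a primitive Dirichlet character $\chi$ mod~$q$,
\[ L\left( \tfrac12 + \i t, \chi \right) \ll \left( q(|t|+1) \right)^{3/16 + \eps}. \]
Multiplying the bounds for the two factors of $L_{\chi_1,\chi_2}$ separately yields
\[ L_{\chi_1,\chi_2}\left( \tfrac12 + \i t \right) \ll (q_1 q_2)^{3/16 + \eps} (|t|+1)^{3/8 + \eps}, \]
which is precisely the claimed bound at $\sigma = 1/2$.

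Next, on the line $\Re(s) = 1 + \eps$ the Dirichlet series for $L_{\chi_1, \chi_2}$ converges absolutely, giving the trivial estimate $L_{\chi_1,\chi_2}(1 + \eps + \i t) \ll 1$, which matches the claimed bound at $\sigma = 1$. The hypothesis $|\sigma + \i t - 1| > \eps$ keeps us away from the only possible pole, so $L_{\chi_1,\chi_2}$ is holomorphic in the relevant region; polynomial growth in vertical strips follows from the classical convexity bound. Applying the Phragmén--Lindelöf principle in the strip $[1/2, 1+\eps]$ and interpolating the exponents of $q_1 q_2$ and $(|t|+1)$ linearly in $\sigma$ then yields
\[ L_{\chi_1,\chi_2}(\sigma + \i t) \ll (q_1 q_2)^{3(1-\sigma)/8 + \eps} (|t|+1)^{3(1-\sigma)/4 + \eps} \]
for $\sigma \in [1/2, 1]$.

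The extension to $\sigma \in [0, 1/2]$ is obtained from the functional equation
\[ L_{\chi_1,\chi_2}(s) = \alpha_{\chi_1,\chi_2}(s) \, L_{\overline{\chi_1}, \overline{\chi_2}}(1 - s), \]
applied in combination with the already-established bound for $1 - \sigma \in [1/2, 1]$ and the size estimate~\eqref{2284} for $\alpha_{\chi_1,\chi_2}$. There is no serious obstacle in the argument; the only mildly technical point is verifying the polynomial-growth hypothesis required by Phragmén--Lindelöf, which is standard.
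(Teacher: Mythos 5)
Your overall route --- Heath-Brown's hybrid bound $L(1/2+\i t,\chi)\ll (q(|t|+1))^{3/16+\eps}$ at $\sigma=1/2$, the trivial bound at $\sigma=1+\eps$, Phragm\'en--Lindel\"of in between, and the functional equation for $\sigma<1/2$ --- is exactly the one-line proof the paper intends (``an immediate consequence of a result by Heath-Brown and the convexity principle''), and your argument is correct and complete on the range $\sigma\in[1/2,1]$, which is the only range the paper actually uses later.

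However, the final step is not the formality you claim. For $\sigma\in[0,1/2)$ the functional equation together with~\eqref{2284} gives
\[ \left| L_{\chi_1,\chi_2}(\sigma+\i t) \right| \asymp (q_1q_2)^{\frac12-\sigma}|t|^{1-2\sigma}\left| L_{\overline{\chi_1},\overline{\chi_2}}(1-\sigma-\i t) \right| \ll (q_1q_2)^{\frac12-\frac{5\sigma}8+\eps}(|t|+1)^{1-\frac{5\sigma}4+\eps}, \]
and these exponents exceed the stated ones $\tfrac{3(1-\sigma)}8$ and $\tfrac{3(1-\sigma)}4$ by $\tfrac{1-2\sigma}8$ and $\tfrac{1-2\sigma}4$ respectively, so the reflection does \emph{not} yield the claimed bound for $\sigma<1/2$; nor does running Phragm\'en--Lindel\"of on $[-\eps,1/2]$ help, since the endpoint data at $\Re(s)=-\eps$ are exactly the convexity exponents $1/2$ and $1$. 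In fact no proof can close this gap: taking $q_1=q_2=1$ gives $L_{\chi_1,\chi_2}=\zeta^2$ and $|\zeta(\i t)|^2\asymp|t|\,|\zeta(1+\i t)|^2\gg |t|(\log|t|)^{-2}$, which contradicts the asserted $(|t|+1)^{3/4+\eps}$ at $\sigma=0$. So the statement as printed is too strong near $\sigma=0$ (a defect of the paper's formulation, not only of your write-up), and the honest conclusion of your method is the bound above for $\sigma<1/2$ together with the stated bound for $\sigma\in[1/2,1]$; you should record that explicitly rather than asserting that ``there is no serious obstacle.''
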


We will also need upper bounds for the first moment of~\( L_{\chi_1, \chi_2}(s) \) in the critical strip.
In this regard, the following result will be helpful.

\begin{theorem} \label{222}
  Let~\( \eps > 0 \).
  We have, for~\( \sigma \in [0, 1] \) and~\( q_1, q_2 \leq T \),
  \[ \int_1^T \left| L_{\chi_1, \chi_2}(\sigma + \i t) \right| \, \d t \ll T^{1 + \eps} + (q_1 q_2)^{\frac12 - \sigma} T^{2 - 2 \sigma + \eps}, \]
  where the implicit constant depends only on~\(\eps\).
\end{theorem}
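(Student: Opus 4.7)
The plan is to split the range of~\(\sigma\) at~\(1/2\) and to reduce the lower range to the upper one by means of the functional equation for~\( L_{\chi_1, \chi_2} \).

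For~\( \sigma \in [1/2, 1] \), I would apply Cauchy--Schwarz to decouple the two \(L\)-functions,
\[ \int_1^T \left| L_{\chi_1, \chi_2}(\sigma + \i t) \right| \, \d t \leq \prod_{i = 1, 2} \left( \int_1^T \left| L(\sigma + \i t, \chi_i) \right|^2 \, \d t \right)^{\frac12}, \]
and then reduce the task to the uniform single-factor second moment estimate
\[ \int_1^T \left| L(\sigma + \i t, \chi) \right|^2 \, \d t \ll T^{1 + \eps} \]
for primitive~\( \chi \) mod~\(q\) with~\( q \leq T \) and~\( \sigma \in [1/2, 1] \). This is by now classical and can be proved by means of an approximate functional equation for~\( L(s, \chi) \) with balanced length~\( X = Y \asymp \sqrt{qT} \) (which stays~\( \leq T \) thanks to the hypothesis~\( q \leq T \)), combined with the mean value theorem \( \int_1^T |\sum_{n \leq N} a_n n^{-\i t}|^2 \, \d t \ll (T + N) \sum_n |a_n|^2 \). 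The leading Dirichlet polynomial contributes~\( \ll T (\log T)^{\O(1)} \), while the dual polynomial, after a dyadic decomposition of~\(t\) and using the analogue of~\eqref{2284} to control~\( |\alpha_\chi(\sigma + \i t)| \), also contributes~\( \ll T^{1 + \eps} \) uniformly throughout the strip \( \sigma \in [1/2, 1] \).

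For~\( \sigma \in [0, 1/2] \), I would use the functional equation \( L_{\chi_1, \chi_2}(s) = \alpha_{\chi_1, \chi_2}(s) L_{\overline{\chi_1}, \overline{\chi_2}}(1 - s) \) together with~\eqref{2284} to obtain
\[ \left| L_{\chi_1, \chi_2}(\sigma + \i t) \right| \ll (q_1 q_2)^{\frac12 - \sigma} t^{1 - 2\sigma} \left| L_{\overline{\chi_1}, \overline{\chi_2}}(1 - \sigma + \i t) \right|, \]
after flipping the sign of~\(t\) via \( |L(\overline s, \overline\chi)| = |L(s, \chi)| \). Integrating this pointwise estimate over~\( [1, T] \) and invoking the bound already established in the first step, applied with~\( 1 - \sigma \in [1/2, 1] \) in place of~\(\sigma\), yields the contribution~\( (q_1 q_2)^{\frac12 - \sigma} T^{2 - 2\sigma + \eps} \). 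Combining the two ranges gives the claimed bound~\( T^{1 + \eps} + (q_1 q_2)^{\frac12 - \sigma} T^{2 - 2\sigma + \eps} \).

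The main (and essentially only) substantive obstacle is establishing the uniform second moment bound across the entire strip~\( \sigma \in [1/2, 1] \) under the sole assumption~\( q \leq T \): the boundary cases~\( \sigma = 1/2 \) and~\( \sigma = 1 \) are both classical (going back to Ramachandra and Heath-Brown), but the smooth interpolation through intermediate~\(\sigma\) requires verifying, by a careful dyadic estimate in~\(t\), that the dual contribution in the approximate functional equation really remains~\( \ll T^{1 + \eps} \) throughout. Once this is in hand, the remaining reductions are routine applications of Cauchy--Schwarz and the functional equation.
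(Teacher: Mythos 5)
Your proposal is correct and follows essentially the same route as the paper: the paper likewise handles \( \sigma \geq 1/2 \) by reducing to the hybrid second moment of the individual \(L\)-functions (citing Gallagher's result at \( \sigma = 1/2 \) and noting his proof adapts to \( \sigma > 1/2 \), which is exactly the approximate-functional-equation-plus-mean-value argument you sketch), and then obtains the range \( \sigma < 1/2 \) from the functional equation together with~\eqref{2284}. The only difference is that you re-derive the second moment bound rather than quoting it, and your verification that the dual polynomial stays \( \ll T^{1+\eps} \) throughout \( \sigma \in [1/2, 1] \) is sound.
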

\begin{proof}
  For~\( \sigma = 1/2 \), this is an immediate consequence of a result by Gallagher~\cite[(1\(_T\))]{Gal75}.
  His proof can easily be adapted to cover also the range~\( \sigma > 1/2 \), and the result for~\( \sigma < 1/2 \) then follows from the functional equation and~\eqref{2284}.
\end{proof}

\subsection{Voronoi summation for~\texorpdfstring{\( \tau_{\chi_1, \chi_2}(n) \)}{tau(n)}} \label{23}

Here we want to develop a summation formula of Voronoi type for~\( \tau_{\chi_1, \chi_2}(n) \).

Before stating the result we need to introduce some notation.
Let~\(a\) and~\( c > 0 \) be coprime integers.
We set
\[ \hat \tau_{\chi_1, \chi_2}\left( n; \frac ac \right) := \frac1{ [c, q_1]^\frac12 [c, q_2]^\frac12 } \sum_{n_1 n_2 = n} \sum_\twoln{ b_1 \bmod [c, q_1] }{ b_2 \bmod [c, q_2] } \chi_1(b_1) \chi_2(b_2) e\left( \frac{ a b_1 b_2 }c + \frac{n_1 b_1}{ [c, q_1] } + \frac{n_2 b_2}{ [c, q_2] } \right), \]
where \( [c, q_i] \) denotes the least common multiple of \(c\)~and~\(q_i\).
We also define
\begin{align}
  B_{\chi_1, \chi_2}^+(\xi) &:= \left\{ \begin{aligned}
      -2\pi Y_0( 4\pi \xi ) \quad \text{if} \quad \chi_1(-1) = \chi_2(-1), \\
      -2\pi\i J_0( 4\pi \xi ) \quad \text{if} \quad \chi_1(-1) \neq \chi_2(-1),
    \end{aligned} \right. \label{2303} \\
  B_{\chi_1, \chi_2}^-(\xi) &:= 2 \left( \chi_1(-1) + \chi_2(-1) \right) K_0( 4\pi \xi ). \label{2352}
\end{align}
Finally, we define~\( \Pi_{\chi_1, \chi_2}( X; c, a ) \) to be the polynomial in~\(X\), which in the case~\( \chi_1 = \chi_2 \) is given by
\begin{align*}
  \Pi_{\chi_1, \chi_2}( X; c, a ) &:= \chi_1\left( \frac c{ (c, q_1) } \right) \overline{\chi_1}\left( \frac{ a q_1 }{ (c, q_1) } \right) G(\chi_1) \sum_{d \mid q_1} \frac{ \mu(d) }d \left( X + 2 \gamma + 2 \log\left( \frac{q_1}{cd} \right) \right), \\
  \intertext{and which otherwise is equal to the constant}
  \Pi_{\chi_1, \chi_2}( X; c, a ) &:= \chi_1\left( \frac c{ (c, q_2) } \right) \overline{\chi_2}\left( \frac{ q_2 }{ (c, q_2) } \right) G(\chi_2, a) L( 1, \chi_1 \overline{\chi_2} ) \\
    &\phantom{ := {} } \qquad\qquad\qquad + \chi_2\left( \frac c{ (c, q_1) } \right) \overline{\chi_1}\left( \frac{ q_1 }{ (c, q_1) } \right) G(\chi_1, a) L( 1, \overline{\chi_1} \chi_2 ).
\end{align*}

The Voronoi formula for~\( \tau_{\chi_1, \chi_2}(n) \) now reads as follows.

\begin{theorem} \label{231}
  Let \( f : (0, \infty) \to \CC \) be a smooth and compactly supported function.
  Let \(a\)~and~\( c \geq 1 \) be coprime integers.
  Let \( \chi_1 \)~mod~\(q_1\) and \(\chi_2\)~mod~\(q_2\) be primitive Dirichlet characters.
  Then
  \begin{align} \label{2367}
    \begin{split}
      \sum_n f(n) &\tau_{\chi_1, \chi_2}(n) e\left( \frac{an}c \right) = \frac1c \int \! \Pi_{\chi_1, \chi_2}( \log \xi; c, a ) f(\xi) \, \d\xi \\
        &\qquad + \frac1{ [c, q_1]^\frac12 [c, q_2]^\frac12 } \sum_\pm \sum_{n = 1}^\infty \hat \tau_{\chi_1, \chi_2}\left( n; \frac{\pm a}c \right) \int \! B_{\chi_1, \chi_2}^\pm\left( \frac{ (n \xi)^\frac12 }{ [c, q_1]^\frac12 [c, q_2]^\frac12 } \right) f(\xi) \, \d\xi.
    \end{split}
  \end{align}
\end{theorem}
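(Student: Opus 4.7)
The approach is the classical Mellin transform plus functional equation method, carried out in the character-twisted setting. First write
\[ S := \sum_n f(n) \tau_{\chi_1, \chi_2}(n) e\left( \frac{an}{c} \right) = \frac{1}{2\pi \i} \int_{(\sigma_0)} D(s; a/c) \tilde f(s) \, \d s, \]
where $\sigma_0 > 1$, where $\tilde f(s) := \int_0^\infty f(\xi) \xi^{s - 1} \, \d \xi$ is the Mellin transform of $f$ (entire and of rapid decay on vertical lines, since $f$ is smooth and compactly supported), and where
\[ D(s; a/c) := \sum_{n \geq 1} \tau_{\chi_1, \chi_2}(n) e\left( \frac{an}{c} \right) n^{-s}. \]
The core task is then to establish the meromorphic continuation of $D(s; a/c)$ to $\CC$ and a functional equation relating it to a dual series expressed through $\hat \tau_{\chi_1, \chi_2}(n; \pm a/c)$.

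To this end, writing $\tau_{\chi_1, \chi_2}(n) = \sum_{d_1 d_2 = n} \chi_1(d_1) \chi_2(d_2)$, I would split $d_1$ and $d_2$ into residue classes modulo $q_1' := [c, q_1]$ and $q_2' := [c, q_2]$, respectively. Since $c \mid q_i'$ and $q_i \mid q_i'$, both $\chi_i(d_i)$ and $e(a d_1 d_2 / c)$ depend only on $d_i$ modulo $q_i'$, and one obtains
\[ D(s; a/c) = (q_1' q_2')^{-s} \sum_{b_1 \bmod q_1'} \sum_{b_2 \bmod q_2'} \chi_1(b_1) \chi_2(b_2) e\left( \frac{a b_1 b_2}{c} \right) \zeta\left( s, \frac{b_1}{q_1'} \right) \zeta\left( s, \frac{b_2}{q_2'} \right), \]
with $\zeta(s, \cdot)$ the Hurwitz zeta function. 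Now apply Hurwitz's functional equation to each factor, split each resulting cosine into $e(\pm k_i b_i / q_i')$, and interchange the order of summation. The inner double sum over $(b_1, b_2)$ matches exactly the defining expression of $\hat \tau_{\chi_1, \chi_2}(n_1 n_2; \pm a/c)$, with the dual variables $k_i$ relabelled as $n_i$ and $n = n_1 n_2$. The Gamma and trigonometric factors surviving outside, after a case split according to the parities $\chi_1(-1)$ and $\chi_2(-1)$, assemble into the Mellin transforms of the kernels $B^\pm_{\chi_1, \chi_2}$.

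To finish, shift the contour from $\Re s = \sigma_0$ to $\Re s = -\sigma_0$, picking up the residues at $s = 1$ (and handling the boundary at $s = 0$ symmetrically via the functional equation itself). Inverting the Mellin transform on the shifted contour, and using the standard Mellin pairs for $J_0$, $Y_0$ and $K_0$, identifies the resulting integral as the Bessel sum on the right-hand side of~\eqref{2367}. The residual polar contributions produce the main term: when $\chi_1 = \chi_2$, each Hurwitz zeta factor contributes a simple pole in $s$, combining into a double pole whose expansion yields a linear polynomial in $\log \xi$; when $\chi_1 \neq \chi_2$, only two simple poles contribute, each giving a constant. In both cases, carrying out the residual character sums and then inverting in $s$ produces precisely the polynomial $\Pi_{\chi_1, \chi_2}(\log \xi; c, a)$ on the first line of~\eqref{2367}.

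The main obstacle I anticipate lies in this residue computation: verifying that the polar contributions collapse into the explicit form of $\Pi_{\chi_1, \chi_2}$ requires delicate local computations at the primes dividing $q_1 q_2$. In particular, the sum $\sum_{d \mid q_1} \mu(d)/d$ appearing in the diagonal case arises from the principal-character piece of the Hurwitz expansion interacting with the coprimality conditions implicit in $\chi_1(b_1)$, and the ratios $c/(c, q_i)$ emerge from the interaction between the modulus $q_i'$ and the twist $a/c$. A secondary difficulty is the parity matching: the appearance of $Y_0$ versus $J_0$ in $B^+_{\chi_1, \chi_2}$ and the vanishing of $B^-_{\chi_1, \chi_2}$ unless both characters are even reflects exactly which combinations of $\sin(\pi s/2)$ and $\cos(\pi s/2)$ survive after the parity-dependent sums from the two Hurwitz functional equations are carried out.
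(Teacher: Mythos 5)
Your proposal is correct and follows essentially the same route as the paper: Mellin inversion, expression of the twisted Dirichlet series through Hurwitz zeta functions, the Hurwitz functional equation to produce the dual series in $\hat\tau_{\chi_1,\chi_2}$, identification of the surviving Gamma/trigonometric factors as Mellin transforms of the Bessel kernels $B^\pm_{\chi_1,\chi_2}$, and a residue computation at the (at most) double pole at $s=1$ yielding $\Pi_{\chi_1,\chi_2}$. The delicate points you flag -- the local computations at primes dividing $q_1 q_2$ in the residue and the parity bookkeeping for $J_0$, $Y_0$, $K_0$ -- are exactly where the paper invokes the Laurent expansion of the Hurwitz zeta function together with the Gau{\ss}-sum evaluation of Montgomery--Vaughan.
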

\begin{proof}
  The proof of this result follows standard paths (see e.g.~\cite[Chapter~1]{Jut87}), although a few additional technical difficulties arise from the fact that the parameters \(c\), \(q_1\)~and~\(q_2\) may have possible common factors.
  To simplify the notation we set~\( c_1 := [c, q_1] \) and~\( c_2 := [c, q_2] \).
  
  We start by defining the following two Dirichlet series,
  \[ L_{\chi_1, \chi_2}\left( s; \frac ac \right) := \sum_{n = 1}^\infty \frac{ \tau_{\chi_1, \chi_2}(n) }{n^s} e\left( \frac{an}c \right) \qquad \text{and} \qquad \hat L_{\chi_1, \chi_2}\left( s; \frac ac \right) := \sum_{n = 1}^\infty \frac{ \hat \tau_{\chi_1, \chi_2}\left( n; \frac ac \right) }{n^s}. \]
  By expressing these two Dirichlet series in terms of Hurwitz zeta functions, we see that they can both be continued meromorphically to the whole complex plane with at most one possible pole at~\( s = 1 \) of degree not larger than~\(2\).
  In the same way, by using the functional equation for the Hurwitz zeta function, we deduce the following functional equation for~\( L_{\chi_1, \chi_2}(s; a/c) \),
  \begin{equation} \label{2332}
    L_{\chi_1, \chi_2}\left( s; \frac ac \right) = \frac{ \Gamma(1 - s)^2 }\pi \left( \frac{ 4 \pi^2 }{ c_1 c_2 } \right)^{s - \frac12} \sum_\pm \kappa_{\chi_1, \chi_2}^\pm(1 - s) \hat L_{\chi_1, \chi_2}\left( 1 - s; \pm \frac ac \right),
  \end{equation}
  with
  \[ \kappa_{\chi_1, \chi_2}^+(s) := \frac{ \chi_1 \chi_2(-1) e^{\pi\i s} + e^{-\pi\i s} }2 \qquad \text{and} \qquad \kappa_{\chi_1, \chi_2}^-(s) := \frac{ \chi_1(-1) + \chi_2(-1) }2. \]
  
  In order to prove~\cref{231}, we first express the sum on the left hand side in~\eqref{2367} via Mellin inversion as
  \[ \sum_n f(n) \tau_{\chi_1, \chi_2}(n) e\left( \frac{an}c \right) = \frac1{2\pi\i} \int_{ (2) } \! \hat f(s) L_{\chi_1, \chi_2}\left( s; \frac ac \right) \, \d s, \]
  where \(\hat f\) denotes the Mellin transform of~\(f\).
  After moving the line of integration to~\( \Re(s) = -1 \), using the functional equation~\eqref{2332}, and expanding the \(L\)-functions back into Dirichlet series, we arrive at
  \[ \sum_n f(n) \tau_{\chi_1, \chi_2}(n) e\left( \frac{an}c \right) = \Res{s = 1}\left( \hat f(s) L_{\chi_1, \chi_2}\left( s; \frac ac \right) \right) + \frac1{ (c_1 c_2)^\frac12 } \sum_\pm \sum_{n = 1}^\infty \hat \tau_{\chi_1, \chi_2}\left( n; \frac{\pm a}c \right) I^\pm(n), \]
  where
  \[ I^\pm(n) := \frac1{2\pi\i} \int_{ (-1) } \! G^\pm(1 - s) \hat f(s) \, \d s \qquad \text{with} \qquad G^\pm(s) := 2 \kappa_{\chi_1, \chi_2}^\pm(s) \Gamma(s)^2 \left( \frac{c_1 c_2}{4\pi^2 n} \right)^s. \]
  The integral~\( I^\pm(n) \) can be evaluated by observing that~\( G^\pm(s) \) is the Mellin transform of a certain Bessel function (see~\cite[17.43.16--18]{GR07}), so that by the Mellin convolution theorem we have
  \[ I^\pm(n) = \int \! B_{\chi_1, \chi_2}^\pm\bigg( \frac{ (n \xi)^\frac12 }{ (c_1 c_2)^\frac12 } \bigg) f(\xi) \, \d\xi, \]
  with \( B_{\chi_1, \chi_2}^\pm(\xi) \) as defined in \eqref{2303}~and~\eqref{2352}.
  
  It remains to evaluate the residue, which essentially amounts to determining the Laurent series expansion of~\( L_{\chi_1, \chi_2}( s; a/c) \) around~\( s = 1 \).
  We only want to indicate the main steps.
  Using the Laurent series expansion of the Hurwitz zeta function, we see that
  \[ L_{\chi_1, \chi_2}\left( s, \frac ac \right) = \frac1{ (s - 1)^2 } \lambda_{\chi_1, \chi_2}^{ (2) }\left( \frac ac \right) + \frac1{s - 1} \left( \lambda_{\chi_1, \chi_2}^{ (1) }\left( \frac ac \right) - \lambda_{\chi_1, \chi_2}^{ (2) }\left( \frac ac \right) \log(c_1 c_2) \right) + \LO{1}, \]
  where
  \begin{align*}
    \lambda_{\chi_1, \chi_2}^{ (2) }\left( \frac ac \right) &:= \frac1{c_1 c_2} \sum_\twoln{ 1 \leq b_1 \leq c_1 }{ 1 \leq b_2 \leq c_2 } \chi_1(b_1) \chi_2(b_2) e\left( \frac{a b_1 b_2}c \right), \\
    \lambda_{\chi_1, \chi_2}^{ (1) }\left( \frac ac \right) &:= -\frac1{c_1 c_2} \sum_\twoln{ 1 \leq b_1 \leq c_1 }{ 1 \leq b_2 \leq c_2 } \chi_1(b_1) \chi_2(b_2) e\left( \frac{a b_1 b_2}c \right) \left( \psi\left( \frac{b_1}{c_1} \right) + \psi\left( \frac{b_2}{c_2} \right) \right),
  \end{align*}
  with \(\psi\) denoting the digamma function.
  The first expression can be evaluated via~\cite[Lemma~5.4]{MV75}.
  For the second, we also make use of~\cite[Lemma~5.4]{MV75} and get
  \begin{multline*}
    \lambda_{\chi_1, \chi_2}^{ (1) }\left( \frac ac \right) = - \frac1{ c [q_1, q_2] } \Bigg( \chi_1\left( \frac{c_2}{q_2} \right) \overline{\chi_2}\left( a \frac{c_2}c \right) G(\chi_2) \sum_{b_1 = 1}^{ [q_1, q_2] } \chi_1 \overline{\chi_2}(b_1) \psi\left( \frac{b_1}{ [q_1, q_2] } \right) \\
      + \chi_2\left( \frac{c_1}{q_1} \right) \overline{\chi_1}\left( a \frac{c_1}c \right) G(\chi_1) \sum_{b_2 = 1}^{ [q_1, q_2] } \overline{\chi_1} \chi_2(b_2) \psi\left( \frac{b_2}{ [q_1, q_2] } \right) \Bigg).
  \end{multline*}
  The remaining sums can be calculated by writing \( L( s, \chi_1 \overline{\chi_2} ) \)~and~\( L( s, \overline{\chi_1} \chi_2 ) \) in terms of Hurwitz zeta functions and comparing the Laurent series coefficients around~\( s = 1 \).
\end{proof}

As an immediate corollary of \cref{231}, we can deduce a summation formula for~\( \tau_{\chi_1, \chi_2}(n) \) in arithmetic progressions.
If we set
\[ T_{\chi_1, \chi_2}(n; c, h) := \frac1{c^\frac12 } \sum_\twoln{a \bmod c}{ (a, c) = 1 } e\left( \frac{-h a}c \right) \hat \tau_{\chi_1, \chi_2}\left( n; \frac ac \right), \]
then the result reads as follows.

\begin{theorem} \label{232}
  Let \( f : (0, \infty) \to \CC \) be a smooth and compactly supported function.
  Let~\(h\) and~\( c \geq 1 \) be integers.
  Let \( \chi_1 \)~mod~\(q_1\) and \(\chi_2\)~mod~\(q_2\) be primitive Dirichlet characters.
  Then
  \begin{multline*}
    \sum_{ n \equiv h \bmod c } f(n) \tau_{\chi_1, \chi_2}(n) = \frac1c \sum_{c_0 \mid c} \frac1{c_0} \sum_\twoln{a_0 \bmod c_0}{ (a_0, c_0) = 1 } e\left( \frac{-h a_0}{c_0} \right) \int \! \Pi_{\chi_1, \chi_2}( \log\xi; c_0, a_0 ) f(\xi) \, \d\xi \\
      + \frac1c \sum_{c_0 \mid c} \frac{ {c_0}^\frac12 }{ [c_0, q_1]^\frac12 [c_0, q_2]^\frac12 } \sum_\pm \sum_{n = 1}^\infty T_{\chi_1, \chi_2}( n; c_0, \pm h ) \mc \int \! B_{\chi_1, \chi_2}^\pm\left( \frac{ (n \xi)^\frac12 }{ [c_0, q_1]^\frac12 [c_0, q_2]^\frac12 } \right) f(\xi) \, \d\xi.
  \end{multline*}
\end{theorem}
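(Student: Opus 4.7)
The plan is to derive \cref{232} as a direct corollary of \cref{231} by detecting the congruence condition through additive characters. Starting from the orthogonality relation
\[ \mathbf{1}_{n \equiv h \bmod c} = \frac{1}{c} \sum_{a \bmod c} e\left( \frac{a(n - h)}{c} \right), \]
I would write
\[ \sum_{ n \equiv h \bmod c } f(n) \tau_{\chi_1, \chi_2}(n) = \frac{1}{c} \sum_{a \bmod c} e\left( \frac{-h a}{c} \right) \sum_n f(n) \tau_{\chi_1, \chi_2}(n) e\left( \frac{a n}{c} \right). \]
Since \cref{231} requires the numerator and denominator of the fraction to be coprime, I would regroup the outer sum by the divisor \( c_0 := c/(a, c) \). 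Writing \( a = (c/c_0) a_0 \) with \( (a_0, c_0) = 1 \), the fraction \( a/c \) reduces to \( a_0/c_0 \), and \(a\) running through residues mod~\(c\) corresponds to letting \(c_0\) range over divisors of \(c\) and \(a_0\) range over reduced residues mod~\(c_0\).

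Having performed this reindexing, I would apply \cref{231} with the pair~\( (a_0, c_0) \) to the inner sum. The main term of \eqref{2367} contributes
\[ \frac1c \sum_{c_0 \mid c} \frac1{c_0} \sum_\twoln{ a_0 \bmod c_0 }{ (a_0, c_0) = 1 } e\left( \frac{-h a_0}{c_0} \right) \int \! \Pi_{\chi_1, \chi_2}( \log\xi; c_0, a_0 ) f(\xi) \, \d\xi, \]
which matches the first line in the statement of \cref{232} directly. For the dual sum in \eqref{2367}, switching the order of summation yields, for each divisor~\(c_0\) and each sign~\(\pm\), an inner character sum of the shape
\[ \sum_\twoln{ a_0 \bmod c_0 }{ (a_0, c_0) = 1 } e\left( \frac{-h a_0}{c_0} \right) \hat \tau_{\chi_1, \chi_2}\left( n; \frac{\pm a_0}{c_0} \right). \]

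At this point the substitution \( a_0 \mapsto \pm a_0 \), which is a bijection on units mod~\(c_0\), transforms this expression into \( c_0^{1/2}\, T_{\chi_1, \chi_2}(n; c_0, \pm h) \), by the very definition of~\( T_{\chi_1, \chi_2} \). Collecting everything, the dual contribution becomes
\[ \frac1c \sum_{c_0 \mid c} \frac{ c_0^\frac12 }{ [c_0, q_1]^\frac12 [c_0, q_2]^\frac12 } \sum_\pm \sum_{n = 1}^\infty T_{\chi_1, \chi_2}(n; c_0, \pm h) \int \! B_{\chi_1, \chi_2}^\pm\!\left( \frac{ (n \xi)^\frac12 }{ [c_0, q_1]^\frac12 [c_0, q_2]^\frac12 } \right) f(\xi) \, \d\xi, \]
which is exactly the second line claimed in \cref{232}.

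In summary, the proof is purely formal: it amounts to detecting the progression, reducing every fraction to lowest terms, invoking \cref{231}, and renormalizing the resulting character sums into the form~\(T_{\chi_1, \chi_2}\). There is no genuine obstacle; the only thing requiring care is the bookkeeping of the gcd decomposition and of the signs \(\pm\), in particular verifying that the change of variables \(a_0 \mapsto \pm a_0\) is what converts the shift by~\(-h\) into a shift by~\(\pm h\) as required by the definition of~\( T_{\chi_1, \chi_2}(n; c_0, \pm h) \).
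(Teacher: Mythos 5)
Your proposal is correct and follows exactly the route the paper takes (the paper's proof is a one-line remark that one encodes the congruence via additive characters and applies \cref{231}); your detailed bookkeeping of the gcd decomposition, the reduction of $a/c$ to $a_0/c_0$, and the substitution $a_0 \mapsto \pm a_0$ converting the character sum into $c_0^{1/2}\, T_{\chi_1,\chi_2}(n;c_0,\pm h)$ is all accurate.
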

\begin{proof}
  The formula follows by encoding the congruence condition via additive characters and then applying \cref{231}.
\end{proof}

Concerning the Bessel function~\( B_{\chi_1, \chi_2}^+(\xi) \), we want to note the following technical lemma, which describes its behaviour for large~\(\xi\) (see~\cite[Lemma~2.3]{Top16}).

\begin{lemma} \label{233}
  If~\( \xi \gg 1 \), then \( B_{\chi_1, \chi_2}^+(\xi) \) can be expressed as
  \[ B_{\chi_1, \chi_2}^+(\xi) = 2 \Re\left( e( 2 \xi ) W_{\chi_1, \chi_2}(\xi) \right), \]
  where~\( W_{\chi_1, \chi_2} : (0, \infty) \to \CC \) is a certain smooth function whose derivatives satisfy the bounds
  \[ W_{\chi_1, \chi_2}^{ (\nu) }(\xi) \ll \xi^{-\frac12 - \nu} \quad \text{for} \quad \nu \geq 0. \]
\end{lemma}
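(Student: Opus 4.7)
The plan is to reduce the statement to the standard large-argument asymptotic expansion of Bessel functions, which is cleanest to formulate in terms of the Hankel function \( H_0^{(1)}(x) = J_0(x) + \i Y_0(x) \). The key fact one needs is that this function admits a representation of the form
\[ H_0^{(1)}(x) = \sqrt{\frac{2}{\pi x}} \, e^{\i(x - \pi/4)} \, F(1/x) \]
for a function \( F \) smooth in a neighborhood of~\(0\) with~\( F(0) = 1 \); this can be obtained, for example, from the standard integral representation of \( H_0^{(1)} \) by the method of steepest descent, which simultaneously produces a full asymptotic expansion of~\(F\) in nonnegative powers of~\(1/x\) together with uniform derivative bounds of the shape~\( F^{(\nu)}(u) \ll 1 \) on some neighborhood of~\(0\). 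This is completely classical (e.g.\ Watson's treatise on Bessel functions).

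Setting~\( x = 4\pi\xi \) and pulling the oscillation to the front, I would define
\[ W_0(\xi) := e(-2\xi) \, H_0^{(1)}(4\pi\xi) = \frac{e^{-\i\pi/4}}{\sqrt{2\pi^2 \xi}} \, F\!\left(\frac{1}{4\pi\xi}\right), \]
so that \( H_0^{(1)}(4\pi\xi) = e(2\xi) W_0(\xi) \) for~\( \xi \gg 1 \). The derivative bounds~\( W_0^{(\nu)}(\xi) \ll \xi^{-\frac12 - \nu} \) then follow immediately by the product and chain rules, since every differentiation of either the factor~\( \xi^{-1/2} \) or of~\( F(1/(4\pi\xi)) \) produces an additional factor of order~\( \xi^{-1} \), while the smoothness of~\(F\) at~\(0\) keeps the inner factor bounded.

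Finally, I would split the two cases in the definition of~\( B_{\chi_1, \chi_2}^+ \) using the identities~\( Y_0 = \Im H_0^{(1)} \) and~\( J_0 = \Re H_0^{(1)} \) on the positive real axis. In the case~\( \chi_1(-1) = \chi_2(-1) \) one immediately obtains
\[ -2\pi Y_0(4\pi\xi) = -2\pi\Im\bigl( e(2\xi) W_0(\xi) \bigr) = 2\Re\bigl( e(2\xi) \cdot \pi\i\, W_0(\xi) \bigr), \]
so one can take~\( W_{\chi_1, \chi_2} := \pi\i \, W_0 \); the case~\( \chi_1(-1) \neq \chi_2(-1) \) is handled analogously by pairing off the contributions of~\( H_0^{(1)}(4\pi\xi) \) and~\( H_0^{(2)}(4\pi\xi) = \overline{H_0^{(1)}(4\pi\xi)} \) coming from~\( J_0 = \frac12 (H_0^{(1)} + H_0^{(2)}) \), with the appropriate choice of constant prefactor. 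In both cases the required derivative bounds for~\( W_{\chi_1, \chi_2} \) are inherited from those for~\(W_0\). The whole argument is essentially mechanical; no real obstacle appears, since the entire analytic content is packaged into the Hankel asymptotic, and the main bookkeeping is just the explicit verification of the derivative estimates.
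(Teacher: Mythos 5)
Your overall strategy --- packaging the large-argument behaviour into the Hankel function \( H_0^{(1)} \), writing \( H_0^{(1)}(4\pi\xi) = e(2\xi) W_0(\xi) \) with \( W_0^{(\nu)}(\xi) \ll \xi^{-1/2-\nu} \), and then expressing \( Y_0 \) and \( J_0 \) through \( H_0^{(1)} \) --- is the standard one, and it is what the source the paper cites for this lemma (\cite[Lemma~2.3]{Top16}) does; the paper itself gives no proof beyond that citation. The first case is handled correctly: \( -2\pi Y_0(4\pi\xi) = -2\pi \Im\left( e(2\xi) W_0(\xi) \right) = 2\Re\left( e(2\xi)\, \pi\i\, W_0(\xi) \right) \), and the derivative bounds follow as you describe. (A minor point: the Hankel asymptotic series is divergent, so \(F\) is not analytic at \(0\); what steepest descent actually gives, and what you correctly invoke, is boundedness of all derivatives of \(F\) near \(0\), which is all that is needed.)

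The second case, however, cannot be ``handled analogously with the appropriate choice of constant prefactor'', and this is the one genuine gap: \( -2\pi\i J_0(4\pi\xi) \) is purely imaginary, whereas \( 2\Re\left( e(2\xi) W(\xi) \right) \) is real for any \(W\) whatsoever, so no constant multiple of \(W_0\) (nor any other choice of \(W\)) can produce the claimed identity. Your own computation via \( J_0 = \tfrac12 ( H_0^{(1)} + H_0^{(2)} ) \) in fact yields
\[ -2\pi\i J_0(4\pi\xi) = e(2\xi) \left( -\pi\i W_0(\xi) \right) - \overline{ e(2\xi) \left( -\pi\i W_0(\xi) \right) } = 2\i \, \Im\left( e(2\xi) \left( -\pi\i W_0(\xi) \right) \right), \]
i.e.\ an \(\Im\) rather than a \(\Re\). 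The defect originates in the statement of the lemma, which is carried over from the \(Y_0\)-only setting of~\cite{Top16}; the form in which it is actually used later (e.g.\ in the proof of \cref{432}) is the two-term decomposition \( B_{\chi_1, \chi_2}^+(\xi) = e(2\xi) W(\xi) + e(-2\xi) \tilde W(\xi) \) with \emph{both} \(W\) and \(\tilde W\) (not necessarily complex conjugates of each other) satisfying the stated derivative bounds. That is exactly what your argument proves in both parity cases, so you should state and prove the lemma in that corrected form rather than assert that a constant prefactor settles the \(J_0\) case.
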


We finish this section with the following result on Gau{\ss} sums, which is a special case of~\cite[Lemma~5.4]{MV75} and which will later be of use when evaluating the sums~\( T_{\chi_1, \chi_2}(n; c, h) \).

\begin{lemma} \label{234}
  Let \(\tilde \chi\)~mod~\(\tilde q\) be a Dirichlet character induced by the primitive character~\( \chi \)~mod~\( q \), and let \(a\) be an integer.
  Assume that~\( \tilde q \mid q^\infty \).
  Then \( G(\tilde \chi, a) \) vanishes unless \( \tilde q / q \) divides~\(a\), in which case we have
  \[ G(\tilde \chi, a) = \overline\chi\left( \frac{ a q }{\tilde q} \right) G(\chi) \frac{\tilde q}q. \]
\end{lemma}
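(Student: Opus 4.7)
The plan is to parametrize residues \( b \bmod \tilde q \) as \( b = b_0 + jq \) with \(b_0\) running over residues modulo~\(q\) and \( j \in \{ 0, 1, \ldots, m - 1 \} \), where \( m := \tilde q / q \). The hypothesis \( \tilde q \mid q^\infty \) guarantees that every prime divisor of~\(\tilde q\) also divides~\(q\); consequently \( (b, \tilde q) = 1 \) if and only if \( (b_0, q) = 1 \), and moreover \( \tilde \chi(b) = \chi(b_0) \) for such~\(b\). Splitting the exponential as \( e( a b / \tilde q ) = e( a b_0 / \tilde q ) \, e( a j / m ) \) turns the Gau\ss{} sum into
\[ G(\tilde\chi, a) = \sum_\twoln{b_0 \bmod q}{(b_0, q) = 1} \chi(b_0) \, e\left( \frac{a b_0}{\tilde q} \right) \sum_{j = 0}^{m - 1} e\left( \frac{a j}{m} \right). \]

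The inner geometric sum over~\(j\) equals~\(m\) when \( m \mid a \) and vanishes otherwise, which gives the first half of the claim. When \( m \mid a \), set \( a' := a / m = a q / \tilde q \); the outer sum is then precisely \( G(\chi, a') \), so that \( G(\tilde \chi, a) = (\tilde q / q) \, G(\chi, a') \).

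It remains to evaluate \( G(\chi, a') \) using the primitivity of~\(\chi\). If \( (a', q) = 1 \), the substitution \( b_0 \mapsto \overline{a'} b_0 \) yields \( G(\chi, a') = \overline\chi(a') G(\chi) \). If \( (a', q) > 1 \), then \( G(\chi, a') = 0 \), a standard consequence of primitivity, which is consistent with the convention \( \overline\chi(a') = 0 \). Hence \( G(\chi, a') = \overline\chi(a q / \tilde q) G(\chi) \) in either case, and substituting back gives the claimed identity. No real obstacle arises here: the calculation is essentially direct, and the only delicate point is invoking the vanishing of \( G(\chi, a') \) when \( (a', q) > 1 \), which is the single place where primitivity of~\(\chi\) is essential.
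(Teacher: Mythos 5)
Your computation is correct, and it is worth noting that the paper does not actually prove this lemma at all: it is stated as a special case of Lemma~5.4 of Montgomery--Vaughan~\cite{MV75} and simply cited. Your argument is therefore a self-contained replacement for that citation rather than a variant of an in-paper proof. The three steps all check out: the parametrization \( b = b_0 + jq \) does run over a complete residue system mod~\(\tilde q\), the condition \( \tilde q \mid q^\infty \) is exactly what makes \( (b, \tilde q) = 1 \) equivalent to \( (b_0, q) = 1 \) (so that \( \tilde\chi(b) = \chi(b_0) \)), and the geometric sum over~\(j\) detects the divisibility condition \( (\tilde q / q) \mid a \). The final reduction \( G(\chi, a') = \overline\chi(a') G(\chi) \) for primitive~\(\chi\), valid also when \( (a', q) > 1 \) with both sides zero, is the standard fact you identify, and it is indeed the only point where primitivity enters. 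The only cosmetic slip is that after restricting to \( m \mid a \) you should make explicit that \( e(a b_0 / \tilde q) = e(a' b_0 / q) \), which is what turns the outer sum into \( G(\chi, a') \); this is immediate but is the step that actually uses \( m \mid a \) in the outer sum.
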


\subsection{Approximate functional equations for~\texorpdfstring{\( L_{\chi_1, \chi_2}(s) \)}{L(s)}} \label{24}

Last but not least we want to state the following smooth approximate functional equation for~\( L_{\chi_1, \chi_2}(s) \) which generalizes~\cite[Theorem~4.2]{Ivi91} to Dirichlet \(L\)-functions.

\begin{theorem} \label{241}
  Let~\(\eps > 0 \).
  Let \( V : (0, \infty) \to [0, \infty) \) be a smooth function satisfying
  \[ V(\xi) + V( \xi^{-1} ) = 1 \quad \text{for} \quad \xi \in (0, \infty). \]
  Let~\( s = \sigma + \i t \in \CC \) and~\( x, y \geq 1 \) be such that~\( 1/2 \leq \sigma \leq 1 \), \( q_1, q_2 \leq t \) and~\( 4 \pi^2 xy = q_1 q_2 t^2 \).
  Then
  \begin{equation} \label{2426}
    L_{\chi_1, \chi_2}(s) = \sum_{n = 1}^\infty \frac{ \tau_{\chi_1, \chi_2}(n) }{n^s} V\left( \frac nx \right) + \alpha_{\chi_1, \chi_2}(s) \sum_{n = 1}^\infty \frac{ \tau_{ \overline{\chi_1}, \overline{\chi_2} }(n) }{ n^{1 - s} } V\left( \frac ny \right) + R_{\chi_1, \chi_2}(s; x, y),
  \end{equation}
  where \( R_{\chi_1, \chi_2}(s; x, y) \) satisfies the following individual bound,
  \begin{equation} \label{2440}
    R_{\chi_1, \chi_2}(s; x, y) \ll (q_1 q_2)^{ \frac{ 3 (1 - \sigma) }8 } t^{ -\frac{1 + 3\sigma}4 + \eps },
  \end{equation}
  as well as, for~\( T \gg \max\{ q_1, q_2 \} \), the following bound on average on the critical line,
  \begin{equation} \label{2416}
    \int_{T/2}^T \left| R_{\chi_1, \chi_2}\left( \tfrac12 + \i t; \tfrac{ t \sqrt{q_1 q_2} }{2\pi}, \tfrac{ t \sqrt{q_1 q_2} }{2\pi} \right) \right| \, \d t \ll T^\eps.
  \end{equation}
  The implicit constants depend at most on~\( V \) and~\(\eps\).
\end{theorem}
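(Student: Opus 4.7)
The plan is a classical contour-shifting argument, with the symmetry $V(\xi) + V(\xi^{-1}) = 1$ playing the role usually played by an auxiliary factor~$1/u$. Setting $\tilde V(u) := \int_0^\infty V(\xi) \xi^{u-1}\,\d\xi$, splitting at $\xi = 1$, and substituting $\xi \mapsto 1/\xi$ in one piece yields
\[ \tilde V(u) = \frac{1}{u} + \int_1^\infty V(\xi) \left( \xi^{u-1} - \xi^{-u-1} \right) \d\xi, \]
so that $\tilde V$ extends meromorphically to~$\CC$ with a single simple pole at $u = 0$ of residue~$1$, satisfies $\tilde V(-u) = -\tilde V(u)$, and (together with all its derivatives) decays rapidly on vertical lines. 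Mellin inversion then produces
\[ \sum_n \frac{\tau_{\chi_1,\chi_2}(n)}{n^s}\, V\!\left(\frac{n}{x}\right) = \frac{1}{2\pi\i} \int_{(2)} L_{\chi_1,\chi_2}(s + u)\, \tilde V(u)\, x^u\, \d u. \]

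I would then shift the contour to $\Re u = -c$ for some small $c > 0$, picking up the residue at $u = 0$ which equals $L_{\chi_1,\chi_2}(s)$; any additional residue from a possible pole of $L_{\chi_1,\chi_2}$ at $s + u = 1$ (in case a character is trivial of modulus one) is absorbed into the main term. On the shifted line I apply the functional equation $L_{\chi_1,\chi_2}(s + u) = \alpha_{\chi_1,\chi_2}(s + u)\, L_{\overline{\chi_1},\overline{\chi_2}}(1 - s - u)$ and split
\[ \alpha_{\chi_1,\chi_2}(s + u) = \alpha_{\chi_1,\chi_2}(s)\, (xy)^{-u} + E(s, u). \]
The first piece, after substituting $u \mapsto -u$, exploiting $\tilde V(-u) = -\tilde V(u)$, and re-expanding as a Dirichlet series, produces exactly the second sum in~\eqref{2426}, since the factor $x^u$ combines with $(xy)^{-u}$ to leave~$y^u$, whose pairing with $\tilde V$ yields~$V(n/y)$. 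The contribution of $E(s,u)$ is collected into $R_{\chi_1,\chi_2}(s; x, y)$.

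The central analytic input is that, on the shifted contour,
\[ E(s, u) \ll \left|\alpha_{\chi_1,\chi_2}(s)\right| (xy)^{-\Re u}\, \frac{1 + |u|^2}{|t|}, \]
which follows from~\eqref{2263} via Stirling. The relation $4\pi^2 xy = q_1 q_2 t^2$ is engineered precisely so that $(xy)^{-u}$ is the first-order Taylor approximation of $\alpha_{\chi_1,\chi_2}(s + u)/\alpha_{\chi_1,\chi_2}(s)$ at $u = 0$: the logarithmic derivatives at $u = 0$ of the Gamma and trigonometric factors in $\alpha_{\chi_1,\chi_2}$ match $\log(xy) = \log\bigl(q_1 q_2 t^2/(4\pi^2)\bigr)$. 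Verifying this cancellation and controlling the second-order remainder uniformly on the contour will be the main technical obstacle.

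Given this bound, the individual estimate~\eqref{2440} follows by truncating the integral at $|\Im u| \leq t^\eps$ by the decay of~$\tilde V$, bounding $|\alpha_{\chi_1,\chi_2}(s)|$ via~\eqref{2284}, and estimating $L_{\overline{\chi_1},\overline{\chi_2}}(1 - s - u)$ on the line $\Re(1 - s - u) = 1 - \sigma + c$ via the convexity bound~\cref{221}, with~$c$ chosen to balance the powers of $q_1 q_2$ and $t$. For the averaged estimate~\eqref{2416} I specialize $x = y = t\sqrt{q_1 q_2}/(2\pi)$, interchange the $t$-integration with the contour integration, and apply the first-moment bound~\cref{222} to $\int_{T/2}^T |L_{\overline{\chi_1},\overline{\chi_2}}(1 - s - u)|\,\d t$ on the shifted line; the saving of $1/|t|$ coming from $E(s,u)$, together with the $T^{1+\eps}$ first-moment estimate, then produces the claimed $T^\eps$.
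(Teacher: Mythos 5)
Your proposal is correct and follows essentially the same route as the paper, which simply cites Ivi\'c's Theorem~4.2 (the case \( q_1 = q_2 = 1 \)) and notes that the proof adapts via Theorems \ref{221}~and~\ref{222} --- precisely the convexity and first-moment inputs you invoke for \eqref{2440} and~\eqref{2416}. The Mellin/contour-shift argument, with the cancellation forced by \( 4\pi^2 xy = q_1 q_2 t^2 \) making \( (xy)^{-u} \) the first-order approximation of \( \alpha_{\chi_1, \chi_2}(s + u) / \alpha_{\chi_1, \chi_2}(s) \), is exactly the classical one being adapted.
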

\begin{proof}
  In the special case~\( q_1 = q_2 = 1 \), this result is proven in~\cite[Theorem~4.2]{Ivi91}.
  The proof can be adapted to our situation without any difficulties via Theorems \ref{221}~and~\ref{222}.
\end{proof}

A similar approximative formula holds for the second sum on the right hand side in~\eqref{2426}.

\begin{theorem} \label{242}
  Let~\(\eps > 0 \) and~\( \rho > 1 \).
  Let~\( V : (0, \infty) \to [0, \infty) \), \( s \in \CC \) and~\( x, y \geq 1 \) be as in \cref{241}.
  Then
  \begin{align*}
    \alpha_{\chi_1, \chi_2}(s) \sum_{n = 1}^\infty \frac{ \tau_{ \overline{\chi_1}, \overline{\chi_2} }(n) }{ n^{1 - s} } V\left( \frac ny \right) &= \sum_{n = 1}^\infty \frac{ \tau_{\chi_1, \chi_2}(n) }{n^s} V\left( \frac xn \right) V\left( \frac n{\rho x} \right) \\
      &\phantom{ = {} } + \alpha_{\chi_1, \chi_2}(s) \sum_{n = 1}^\infty \frac{ \tau_{ \overline{\chi_1}, \overline{\chi_2} }(n) }{ n^{1 - s} } V\left( \frac ny \right) V\left( \frac{\rho n}y \right) + R_{\chi_1, \chi_2}'(s; x, y),
  \end{align*}
  where~\( R_{\chi_1, \chi_2}'(s; x, y) \) satisfies the bounds \eqref{2440}~and~\eqref{2416}.
\end{theorem}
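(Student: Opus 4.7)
My plan is to reduce Theorem~\ref{242} to a single identity involving a bump function and then prove it by a Mellin--Barnes argument combined with the functional equation, in close analogy to the proof of Theorem~\ref{241}. First, I introduce the smooth compactly supported function \( J(\xi) := V(\xi) V(1/(\rho \xi)) \) on \((0,\infty)\). Using the symmetry \( V(\xi) + V(1/\xi) = 1 \), a short calculation gives
\[ V\!\left(\frac xn\right) V\!\left(\frac n{\rho x}\right) = J\!\left(\frac xn\right), \qquad V\!\left(\frac ny\right) - V\!\left(\frac ny\right) V\!\left(\frac{\rho n}y\right) = J\!\left(\frac ny\right), \]
which show that the claim of Theorem~\ref{242} is equivalent to the single identity
\[ \alpha_{\chi_1, \chi_2}(s) \sum_{n=1}^\infty \frac{\tau_{\overline{\chi_1}, \overline{\chi_2}}(n)}{n^{1-s}} J\!\left(\frac ny\right) = \sum_{n=1}^\infty \frac{\tau_{\chi_1, \chi_2}(n)}{n^s} J\!\left(\frac xn\right) + R'_{\chi_1, \chi_2}(s; x, y). \]

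To establish this, I would use Mellin inversion. Let \( \hat J(w) := \int_0^\infty J(\xi) \xi^{w-1} \d\xi \); since \( J \in C_c^\infty(0,\infty) \), \( \hat J \) is entire and of super-polynomial decay on any vertical strip. For \( c > \Re(s) \), the left-hand side above becomes, after interchanging sum and integral,
\[ \frac{\alpha_{\chi_1, \chi_2}(s)}{2\pi \i} \int_{(c)} \hat J(w) y^w L_{\overline{\chi_1}, \overline{\chi_2}}(1-s+w) \, \d w, \]
while the right-hand side, after applying the functional equation \( L_{\chi_1, \chi_2}(s-w) = \alpha_{\chi_1, \chi_2}(s-w) L_{\overline{\chi_1}, \overline{\chi_2}}(1-s+w) \), becomes, on a contour \( \Re(w) = c' < \Re(s) - 1 \),
\[ \frac{1}{2\pi \i} \int_{(c')} \hat J(w) x^{-w} \alpha_{\chi_1, \chi_2}(s-w) L_{\overline{\chi_1}, \overline{\chi_2}}(1-s+w) \, \d w. \]
Shifting both contours to \( \Re(w) = 0 \) and taking the difference, I would find
\[ R'_{\chi_1, \chi_2}(s; x, y) = \frac{1}{2\pi \i} \int_{(0)} \hat J(w) L_{\overline{\chi_1}, \overline{\chi_2}}(1-s+w) \bigl[ y^w \alpha_{\chi_1, \chi_2}(s) - x^{-w} \alpha_{\chi_1, \chi_2}(s-w) \bigr] \d w \]
plus residue contributions from poles of \( \alpha_{\chi_1, \chi_2}(s-w) \) in the strip (originating in the \( \Gamma(1-s+w)^2 \) factor at \( w = s-1 \)) and, when \( \chi_1\chi_2 \) is trivial, from the simple pole of \( L_{\overline{\chi_1}, \overline{\chi_2}}(1-s+w) \) at \( w = s \).

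The decisive observation is that the bracket vanishes at \( w = 0 \). Using \eqref{2263} together with the normalization \( 4\pi^2 xy = q_1 q_2 t^2 \), a short calculation gives \( \tfrac{\d}{\d s} \log \alpha_{\chi_1, \chi_2}(s) \big|_{s = 1/2 + \i t} = -\log(xy) + \O(1/t) \), and hence
\[ \alpha_{\chi_1, \chi_2}(s-w) = \alpha_{\chi_1, \chi_2}(s) (xy)^w \bigl( 1 + \O(|w|/t) \bigr) \qquad \text{for } |w| \ll t^{1/2 - \eps}. \]
Consequently, in this range the bracket has size \( |\alpha_{\chi_1, \chi_2}(s)| \cdot \O(|w|/t) \) on \( \Re(w) = 0 \), while for \( |\Im w| \) beyond this the super-polynomial decay of \( \hat J \) takes over. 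Bounding the resulting integral pointwise via Theorem~\ref{221} yields \eqref{2440}, and bounding it on average via Theorem~\ref{222} yields \eqref{2416}. The residue terms, which carry a factor \( \hat J \) evaluated at points with imaginary part of size \( |t| \), are by the rapid decay of \( \hat J \) much smaller than required.

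The main obstacle I expect is the bookkeeping around the contour shifts. This includes a case distinction based on the parities \( \kappa(\chi_1), \kappa(\chi_2) \) (which determine whether \( \alpha_{\chi_1, \chi_2} \) actually has a genuine pole at \( \tilde w = 1 \), after cancellation between the \( \sin \) factors and the \( \Gamma(1-\tilde w)^2 \) factor) and on whether \( \chi_1 \chi_2 \) is trivial. Fortunately, this is precisely the same bookkeeping that already arises in the proof of Theorem~\ref{241}, so it proceeds along entirely analogous lines.
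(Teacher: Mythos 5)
Your argument is correct and is essentially the approach the paper itself takes: the paper's proof of this statement simply defers to \cite[Theorem~4.2]{Ivi91} (as in the proof of \cref{241}), and that proof is exactly the Mellin--Barnes/functional-equation reflection argument you describe, with the bounds \eqref{2440} and \eqref{2416} obtained from Theorems \ref{221} and \ref{222} just as you indicate. One cosmetic slip: the pole of \( L_{\overline{\chi_1},\overline{\chi_2}}(1-s+w) \) at \( w=s \) arises when one of \( \chi_1,\chi_2 \) is the trivial character, not when \( \chi_1\chi_2 \) is trivial, but that residue is negligible in either case by the rapid decay of \( \hat J \).
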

\begin{proof}
  As above, this can be proven by adapting the proof given in~\cite[Theorem~4.2]{Ivi91}.
\end{proof}

\section{Background on automorphic forms} \label{3}

The aim of this section is to briefly present the tools coming from the spectral theory of automorphic forms needed in the treatment of the shifted convolution problem in \cref{4}.
Apart from the well-known Kuznetsov formula, this in particular includes a certain variant of the large sieve inequalities for Fourier coefficients of automorphic forms.

For a general account of the theoretic background we refer to~\cite{DFI02} and~\cite{Iwa02}.
In our specific situation we will however rely mainly on the results worked out in~\cite{Dra17}.

\subsection{Fourier coefficients of automorphic forms} \label{31}

Let \(q\)~and~\(q_0\) be positive integers such that~\( q_0 \mid q \).
In the following, \( \psi \)~will always denote a Dirichlet character mod~\(q_0\).
Let~\( \kappa(\psi) \) be defined as in~\eqref{2109}.
Furthermore, it will be convenient to set
\[ i(\gamma, z) := cz + d \qquad \text{and} \qquad j(\gamma, z) := \frac{ c z + d }{ |cz + d| } \qquad \text{for} \qquad \gamma = \begin{pmatrix} a & b \\ c & d \end{pmatrix} \in \GL_2(\RR). \]

Let~\( \theta_k(q, \psi) \) be the dimension of the space of holomorphic cusp forms of weight~\( k \equiv \kappa(\psi) \bmod 2 \) with respect to~\( \Gamma_0(q) \) and with nebentypus~\(\psi\).
Let~\( f_{j, k}^\psi \),~\( 1 \leq j \leq \theta_k(q, \psi) \), be an orthonormal basis for this space.
Given a singular cusp~\( \mf a \) with associated scaling matrix~\( \sigma_\mf a \), we write the Fourier expansion of~\( f_{j, k}^\psi \) around~\( \mf a \) as
\[ i( \sigma_{\mf a}, z )^{-k} f_{j, k}^\psi( \sigma_{\mf a} z) = \frac{ (4\pi)^\frac k2 }{ \sqrt{ (k - 1)! } } \sum_{n = 1}^\infty \lambda_{j, k}^\psi(n, \mf a) n^\frac{k - 1}2 e(nz). \]

Next, let~\( u_j^\psi \),~\( j \geq 1 \), be an orthonormal basis of the space of Maa{\ss} cusp forms of weight~\( \kappa(\psi) \) with respect to~\( \Gamma_0(q) \) and with nebentypus~\( \psi \).
We can assume that each~\( u_j^\psi \) is either even or odd.
We denote the corresponding spectral parameters by~\( t_j^\psi \), and we write the Fourier expansion of~\(u_j^\psi\) around a singular cusp~\( \mf a \) as
\[ j( \sigma_{\mf a}, z )^{ -\kappa(\psi) } u_j^\psi( \sigma_{\mf a} z) = \sqrt{ \cosh(\pi t_j^\psi) } \sum_{n \neq 0} \rho_j^\psi( n, \mf a ) n^{-\frac12} W_{ \frac n{ |n| } \frac{ \kappa(\psi) }2, \i t_j^\psi }(4 \pi |n| y) e(nx), \]
where \( W_s(\xi) \) denotes the Whittaker function as defined in~\cite[(1.26)]{Iwa02}.
Note that we can choose the spectral parameters in such a way that either~\( t_j^\psi \in [0, \infty) \) or~\( \i t_j^\psi \in [0, \infty) \).
The spectral parameters which satisfy the latter condition are called exceptional.
It is widely believed that Maa{\ss} cusp forms with exceptional spectral parameter do not exist, although this has not been proven so far.
Let~\( \theta \in [0, \infty) \) be such that \( \i t_j^\psi \leq \theta \) for all exceptional~\( t_j^\psi \), uniformly for all levels~\(q\) and any nebentypus~\(\psi\).
By the work of Kim and Sarnak~\cite{Kim03}, we know that the value
\[ \theta = \frac 7{64} \]
is admissible.

Last but not least, we write the Fourier expansion of the Eisenstein series~\( E_\mf c^\psi( z; 1/2 + \i t ) \) of weight~\( \kappa(\psi) \) with respect to~\( \Gamma_0(q) \) and with nebentypus~\( \psi\), associated to the singular cusp~\( \mf c \), around a singular cusp~\( \mf a \) as
\begin{multline*}
  j(\sigma_\mf a, z)^{ -\kappa(\psi) } E_\mf c^\psi\left( \sigma_{\mf a} z; \tfrac12 + \i t \right) = c_{ \mf c, 1 }^\psi(t) y^{\frac12 + \i t} + c_{ \mf c, 2 }^\psi(t) y^{\frac12 - \i t} \\
    + \sqrt{ \cosh(\pi t) } \sum_{n \neq 0} \phi_{ \mf c, t }^\psi( n, \mf a ) n^{-\frac12} W_{ \frac n{ |n| } \frac{ \kappa(\psi) }2, \i t  }(4 \pi |n| y) e(nx).
\end{multline*}

Note that the normalization of the Fourier coefficients used here differs from the one used in~\cite{Dra17} and~\cite{Top17}, from where we will cite some results further below.

\subsection{Bounds for Kloosterman sums} \label{32}

Let \( \mf a \)~and~\( \mf b \) be cusps of~\( \Gamma_0(q) \) which are singular with respect to all characters~\( \psi \)~mod~\( q_0 \), and let \( \sigma_\mf a \)~and~\( \sigma_\mf b \) be their associated scaling matrices.
For~\( m, n \in \ZZ \) and~\( c \in (0, \infty) \) the Kloosterman sum associated to~\( \mf a \) and~\( \mf b \) is defined as
\[ S_{\mf a \mf b}^\psi(m, n; c) := \sum_{ d \bmod c \ZZ } \overline\chi\left( \sigma_\mf a \begin{pmatrix} a & b \\ c & d \end{pmatrix} {\sigma_\mf b}^{-1} \right) e\left( m \frac ac + n \frac dc \right), \]
where the sum runs over all~\( d \)~mod~\( c \ZZ \) for which there exist \(a\)~and~\(b\) such that
\[ \begin{pmatrix} a & b \\ c & d \end{pmatrix} \in {\sigma_\mf a}^{-1} \Gamma_0(q) \sigma_\mf b. \]
Note that this definition depends on the particular choice of the associated scaling matrices.
Furthermore, depending on the choice of~\(c\) the sum may well be empty.

Of particular importance are the sums with~\( \mf a = \mf b \), as they come up in the proof of the large sieve inequalities.
In the following, we will focus on a particular set of cusps~\( \mf a \), namely
\[ \mf A := \{ \infty \} \cup \left\{ u/w \in \QQ : u, w \in \ZZ_{\geq 1}, \,\, (u, w) = 1, \,\, w \mid q, \,\, \left( w, q/w \right) = 1 \right\}, \]
since they are easier to work with from a technical point of view, and since they cover all the cases we need.
Note that all the cusps in~\( \mf A \) are singular with respect to all characters mod~\( q_0 \).

As can be deduced from~\cite[Lemma~4.1]{Dra17}, the sum~\( S_{\mf a \mf a}^\psi(m, n; c) \) for~\( \mf a \in \mf A \) is non-empty exactly when \( c \)~is an integer divisible by~\(q\), in which case we have
\begin{equation} \label{3214}
  \left| S_{\mf a \mf a}^\psi(m, n; c) \right| = \left| S_\psi(m, n; c) \right|,
\end{equation}
where \( S_\psi(m, n; c) \) is the usual twisted Kloosterman sum,
\[ S_\psi(m, n; c) := \sum_\twoln{a \bmod c}{ (a, c) = 1 } \psi(a) e\left( \frac{ ma + n \overline a }c \right). \]
Concerning upper bounds, we know by~\eqref{3214} and~\cite[Theorem~9.2]{KL13} that
\[ S_{\mf a \mf a}^\psi(m, n; c) \ll (m, n, c)^\frac12 ( q_0 c )^{\frac12 + \eps}. \]
The factor~\( {q_0}^{1/2} \) appearing on the right hand side is unfavorable, but in general cannot be omitted (see~\cite[Example~9.9]{KL13}).
However, it effectively disappears if we include a further averaging over all characters~\(\psi\)~mod~\(q_0\).

\begin{lemma} \label{3224}
  Let~\( \eps > 0 \).
  Let \(c\)~and~\(q_0\) be positive integers such that~\( q_0 \mid c \), let~\( m, n \in \ZZ \) and let \(\mf a \in \mf A \).
  Then
  \begin{equation} \label{3265}
    \frac1{ \phi(q_0) } \sum_{\psi \bmod q_0} \left| S_{\mf a \mf a}^\psi(m, n; c) \right|^2 \ll (m, n, c) c^{1 + \eps},
  \end{equation}
  where the implicit constant depends only on~\( \eps \).
\end{lemma}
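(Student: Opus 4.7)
The plan is to reduce the character average of $|S_\psi|^2$ to an additive average of classical (untwisted) Kloosterman sums via two applications of orthogonality, and then to apply the Weil bound together with an elementary divisor estimate. By~\eqref{3214} it suffices to prove the claim with $|S_\psi(m,n;c)|^2$ in place of $|S_{\mf{a}\mf{a}}^\psi(m,n;c)|^2$. Expanding the square of the twisted Kloosterman sum and applying the orthogonality relation
\(
\phi(q_0)^{-1} \sum_{\psi \bmod q_0} \psi(a_1) \overline{\psi}(a_2) = \mathbf{1}[a_1 \equiv a_2 \bmod q_0]
\)
(valid since \(q_0 \mid c\) forces \((a_i, q_0) = 1\) for all surviving terms), one obtains
\[
\frac{1}{\phi(q_0)} \sum_\psi |S_\psi(m,n;c)|^2 = \sum_{\twoln{a_1, a_2 \bmod c,\, (a_i,c) = 1}{a_1 \equiv a_2 \bmod q_0}} e\!\left( \frac{m(a_1 - a_2) + n(\bar a_1 - \bar a_2)}{c} \right).
\]

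The key next step is to detect the congruence $q_0 \mid a_1 - a_2$ by the dual orthogonality of additive characters, writing $\mathbf{1}[q_0 \mid a_1 - a_2] = q_0^{-1} \sum_{k \bmod q_0} e(k(a_1 - a_2)/q_0)$. Since $k/q_0 = (kc/q_0)/c$, this additive character merges with the existing phase into a single linear term; factoring the resulting double sum over $a_1,a_2$ then yields the exact identity
\[
\frac{1}{\phi(q_0)} \sum_\psi |S_\psi(m,n;c)|^2 = \frac{1}{q_0} \sum_{k \bmod q_0} \left| S\!\left( m + \tfrac{kc}{q_0},\, n;\, c \right) \right|^2,
\]
which expresses the left-hand side as an arithmetic average of untwisted Kloosterman sums.

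From here, applying the classical Weil bound $|S(m', n; c)|^2 \leq \tau(c)^2 (m', n, c)\, c$ to each term reduces the lemma to the elementary divisor estimate
\[
\sum_{k \bmod q_0} \left( m + \tfrac{kc}{q_0},\, n,\, c \right) \ll q_0\, (m,n,c)\, c^\eps.
\]
I would prove this by expanding \((x,y,z) = \sum_{d \mid \gcd(x,y,z)} \varphi(d)\), interchanging the order of summation, and counting for each $d \mid (n,c)$ the number of $k \bmod q_0$ with $d \mid m + kc/q_0$, which is at most $O\bigl(q_0 (c/q_0, d)/d + 1\bigr)$ and vanishes unless $(c/q_0, d) \mid m$. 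The main obstacle is a delicate multiplicative bookkeeping of the resulting sum: at each prime $p \mid c$ one must split according to whether $v_p(c/q_0) \leq v_p(m)$, and show that the local factor coming from the rounding term is bounded by $p^{v_p((m,n,c))} \cdot p^{v_p(q_0)}$ up to divisor-function losses. Taking the product over primes then produces exactly the claimed bound $q_0 (m,n,c) c^\eps$, and combining with the Weil bound yields the lemma.
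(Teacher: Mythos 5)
Your proof is correct, but it takes a genuinely different route from the paper's after the shared opening moves (reduction to \( S_\psi(m,n;c) \) via \eqref{3214} and orthogonality in \(\psi\)). You detect the congruence \( a_1 \equiv a_2 \bmod q_0 \) with additive characters, which produces the clean identity
\[
\frac1{\phi(q_0)} \sum_{\psi \bmod q_0} \left| S_\psi(m,n;c) \right|^2 = \frac1{q_0} \sum_{k \bmod q_0} \left| S\left( m + \tfrac{kc}{q_0}, n; c \right) \right|^2,
\]
and then finish with the classical Weil bound plus an elementary gcd-sum estimate. That estimate does close along the lines you sketch: the count of admissible \(k\) for a given \( d \mid (n,c) \) vanishes unless \( (c/q_0, d) \mid m \), and the local case split on whether \( v_p(c/q_0) \leq v_p(m) \) is exactly the right bookkeeping (in the regime where the \(p^\ell\)-sized terms appear, one also has \( v_p((m,n,c)) \geq \ell - \ell_0 \), which is what saves the bound). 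The paper instead reduces to prime-power moduli \( c = p^\ell \), \( q_0 = p^{\ell_0} \) by twisted multiplicativity and splits into two cases: when \( \ell_0 < \ell - k \) (with \( p^k \| (m,n) \)) it quotes the pointwise Weil-type bound for twisted Kloosterman sums to prime-power moduli from \cite{KL13}, and in the complementary case it applies orthogonality and observes that the surviving exponential is identically \(1\), because then \( c \mid m(a_1 - a_2) \) and \( c \mid n(\overline{a_1} - \overline{a_2}) \), so the sum is just a trivial count. Your version buys a self-contained argument needing only the untwisted Weil bound, and the intermediate identity is of some independent interest; the paper's version is shorter because in the case where orthogonality is invoked there is no nontrivial exponential sum left to estimate at all.
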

\begin{proof}
  By~\eqref{3214} it is enough to consider the case of usual twisted Kloosterman sums.
  Moreover, by twisted multiplicativity of Kloosterman sums it is enough to consider the case when \(c\)~and~\(q_0\) are powers of a prime~\(p\).
  Hence, let~\( c = p^\ell \) and~\( q_0 = p^{\ell_0} \) with~\( \ell_0 \leq \ell \), and let \(k\) be the largest integer such that~\( p^k \mid (m, n) \).
  
  If~\( \ell_0 < \ell - k \), then by~\cite[Propositions 9.7~and~9.8]{KL13} we have
  \[ S_\psi(m, n; c) \ll p^{ \frac{k + \ell}2 + \eps } = (m, n, c)^\frac12 c^{\frac12 + \eps}, \]
  and \eqref{3265}~follows immediately.
  If~\( \ell_0 \geq \ell - k \), then we have by orthogonality of Dirichlet characters,
  \[ \frac1{ \phi(q_0) } \sum_{\psi \bmod q_0} \left| S_\psi( m, n; c ) \right|^2 = \sum_\twoln{a_1, a_2 \bmod c}{ a_1 \equiv a_2 \bmod q_0 } e\left( \frac{ (a_1 - a_2) m + ( \overline{a_1} - \overline{a_2} ) n }c \right) = \sum_\twoln{a_1, a_2 \bmod c}{ a_1 \equiv a_2 \bmod q_0 } 1, \]
  so that
  \[ \frac1{ \phi(q_0) } \sum_{\psi \bmod q_0} \left| S_\psi( m, n; c ) \right|^2 \leq \frac{c^2}{q_0} = p^{2\ell - \ell_0} \leq p^{k + \ell} = (m, n, c) c, \]
  and we see that \eqref{3265}~also holds in this case.
\end{proof}

\subsection{The Kuznetsov formula} \label{33}

Let~\( f : (0, \infty) \to \CC \) be a smooth and compactly supported function.
Given a Dirichlet character~\( \psi \)~mod~\(q_0\), we define the following integral transforms of~\(f\),
\begin{align}
  \tilde f(t) &:= \frac{ 2\pi \i t^{ \kappa(\psi) } }{ \sinh(\pi t) } \int_0^\infty \! \left( J_{2\i t}(\eta) - (-1)^{ \kappa(\psi) } J_{-2\i t}(\eta) \right) f(\eta) \, \frac{d\eta}\eta, \label{3393} \\
  \check f(t) &:= 8 \i^{ -\kappa(\psi) } \cosh(\pi t) \int_0^\infty \! K_{2\i t}(\eta) f(\eta) \, \frac{d\eta}\eta, \label{3389} \\
  \dot f(k) &:= 4 \i^k \int_0^\infty \! J_{k - 1}(\eta) f(\eta) \, \frac{d\eta}\eta. \label{3382}
\end{align}
Note that these integral transforms depend on the parity of the character~\(\psi\), even though we do not indicate this in the notation.

The Kuznetsov formula then reads as follows (see~\cite[Theorem~2.3]{Top17}).

\begin{theorem} \label{331}
  Let~\( f : (0, \infty) \to \mathbb{C} \) be a smooth and compactly supported function, let~\( \mf a, \mf b \in \mf A \), let \( \psi \)~mod~\(q_0\) be a Dirichlet character, and let~\(m\),~\(n\) be positive integers.
  Then
  \begin{align*}
    \sum_c \frac{ S_{\mf a \mf b}^\psi(m, n; c) }c f\left( 4\pi \frac{ \sqrt{mn} }c \right) &= \sum_{j = 1}^\infty \tilde f(t_j^\psi) \overline{ \rho_j^\psi(m, \mf a) } \rho_j^\psi(n, \mf b) + \!\!\!\! \sum_\thrln{ k > \kappa(\psi) }{ k \equiv \kappa(\psi) \bmod 2 }{ 1 \leq j \leq \theta_k(q, \psi) } \!\!\!\! \dot f(k) \overline{ \lambda_{j, k}^\psi (m, \mf a) } \lambda_{j, k}^\psi( n, \mf b ) \\
        &\phantom{ = {} } + \sum_{ \mf c \text{ sing.} } \frac1{4\pi} \int_{-\infty}^\infty \! \tilde f(t) \overline{ \phi_{ \mf c, t }^\psi ( m, \mf a ) } \phi_{\mf c, t }^\psi ( n, \mf b ) \, \d t, \\
    \intertext{and}
    \sum_c \frac{ S_{\mf a \mf b}^\psi(m, -n; c) }c f\left( 4\pi \frac{ \sqrt{mn} }c \right) &= \sum_{j = 1}^\infty \check f(t_j^\psi) \overline{ \rho_j^\psi(m, \mf a) } \rho_j^\psi(-n, \mf b) \\
        &\phantom{ = {} } + \sum_{ \mf c \text{ sing.} } \frac1{4\pi} \int_{-\infty}^\infty \! \check f(t) \overline{ \phi_{ \mf c, t }^\psi ( m, \mf a ) } \phi_{\mf c, t }^\psi ( -n, \mf b ) \, \d t,
  \end{align*}
  where \(c\) runs over all positive real numbers for which \( S_{\mf a \mf b}^\psi(m, \pm n; c) \)~is non-empty.
\end{theorem}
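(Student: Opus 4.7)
The plan is to follow the classical Bruggeman--Kuznetsov route via Poincaré series.  For each singular cusp~\(\mf a\), positive integer~\(m\), and complex parameter~\(s\) of sufficiently large real part, I would introduce a Poincaré series \(P_m^{\psi, \mf a}(z, s)\) of weight~\(\kappa(\psi)\), level~\(q\) and nebentypus~\(\psi\), obtained by summing \(y^{s} e(mz)\) (together with the appropriate automorphy factor involving~\(j\) and~\(\overline\psi\)) over \(\Gamma_\mf a \backslash \Gamma_0(q)\) after conjugation by~\(\sigma_\mf a\).  A Bruhat-style decomposition of \(\sigma_\mf a^{-1} \Gamma_0(q) \sigma_\mf b\) then yields an explicit Fourier expansion at the cusp~\(\mf b\), in which the \(n\)-th coefficient is, up to harmless factors, a weighted sum over~\(c\) of \(c^{-1} S_{\mf a \mf b}^\psi(m, n; c)\) against a \(J\)-Bessel integral (for \(n > 0\)) or a \(K\)-Bessel integral (for \(n < 0\)), evaluated at \(4\pi\sqrt{m|n|}/c\).

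Next I would compute the Petersson-style inner product \(\langle P_m^{\psi, \mf a}(\cdot, s_1), P_n^{\psi, \mf b}(\cdot, s_2) \rangle\) in two different ways.  Unfolding one Poincaré series against the Fourier expansion of the other produces a sum of Kloosterman sums weighted by a Mellin-type kernel in \(s_1, s_2\).  The spectral decomposition of \(L^2(\Gamma_0(q) \backslash \mathbb H, \psi)\) with respect to the weight-\(\kappa(\psi)\) hyperbolic Laplacian instead expresses the same inner product as a discrete sum over Maass cusp forms (contributing \(\overline{\rho_j^\psi(m, \mf a)} \rho_j^\psi(n, \mf b)\)), a sum over holomorphic cusp forms (contributing \(\overline{\lambda_{j, k}^\psi(m, \mf a)} \lambda_{j, k}^\psi(n, \mf b)\), present only in the plus-sign case since holomorphic forms carry only positive-index Fourier coefficients), and the continuous Eisenstein contribution, each term weighted by an explicit Bessel-type spectral integral.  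Equating the two expressions and analytically continuing in \(s_1, s_2\) yields the Kuznetsov identity for the specific test function that arises from this computation.

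To replace this rigid test function by an arbitrary smooth compactly supported~\(f\) evaluated at \(4\pi\sqrt{mn}/c\), I would invoke the Sears--Titchmarsh inversion formula for the \(J\)-Bessel transform (and its \(K\)-Bessel analogue in the minus-sign case), which is exactly designed to undo the Bessel weights appearing on the spectral side; the resulting spectral kernels are precisely \(\tilde f\), \(\check f\) and \(\dot f\) as defined in \eqref{3393}--\eqref{3382}.  The main technical difficulty is the careful bookkeeping forced by the nebentypus living on a proper divisor~\(q_0\) of the level~\(q\) and by the cusps \(\mf a, \mf b \in \mf A\) being handled with their explicit scaling matrices: one must verify that the Kloosterman sums on the geometric side and the Fourier coefficients on the spectral side are consistently normalized, and that the analytic continuation of the inner product from the region of absolute convergence of the Poincaré series down to \(\operatorname{Re}(s_i) = 1/2\) is justified term by term.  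All of these points have been carried out in the exact normalization required here in~\cite{Top17}, from which the stated formula is cited.
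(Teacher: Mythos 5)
The paper gives no proof of this statement beyond citing \cite[Theorem~2.3]{Top17}, and your outline is precisely the standard Bruggeman--Kuznetsov argument (Poincaré series, unfolding against the spectral decomposition, Sears--Titchmarsh inversion) that underlies that reference, with the correct observation that holomorphic forms contribute only in the same-sign case and that the real work lies in the normalization of Kloosterman sums and Fourier coefficients at the cusps of~\( \mf A \) with nebentypus mod~\( q_0 \mid q \). This matches the paper's treatment.
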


Assume that \(q\)~is of the form~\( q = rs \) for positive coprime integers~\(r\) and~\(s\) with~\( q_0 \mid r \).
If we consider the cusps~\( \mf a = \infty \) and~\( \mf b = 1/s \), together with associated scaling matrices
\[ \sigma_\infty = \begin{pmatrix} 1 & 1 \\ & 1 \end{pmatrix} \qquad \text{and} \qquad \sigma_{1/s} = \begin{pmatrix} \sqrt{r} & 1 \\ s \sqrt{r} & \sqrt{r}^{-1} \end{pmatrix}, \]
then the left hand sides of the two formulae in \cref{331} become
\begin{equation} \label{3343}
  \sum_c \frac{ S_{\infty 1/s}^\psi(m, \pm n; c) }c f\left( 4\pi \frac{ \sqrt{mn} }c \right) = e\left( \frac{ \pm n \overline{s} }r  \right) \sum_{ (c, r) = 1 } \overline \psi(c) \frac{ S\left( m, \pm n \overline r; sc \right) }{ \sqrt{r} sc } f\left( 4\pi \frac{ \sqrt{mn} }{ \sqrt{r} sc } \right).
\end{equation}
It is in this specific form that we will use the Kuznetsov formula in \cref{4}.

\subsection{Large sieve inequalities} \label{34}

The aim of this section is to deduce a variant of the large sieve inequalities for Fourier coefficients of automorphic forms adapted to our specific setting.
We could in principle use~\cite[Proposition~4.7]{Dra17}, however the factor~\( {q_0}^{1/2} \) appearing there is disadvantageous in our situation.
As we will show, this factor can be removed by averaging over all~\( \psi \)~mod~\(q_0\).

Let~\( \mf a \in \mf A \) and~\( N \geq 1 \).
For each \( \psi \)~mod~\(q_0\), let~\( a_n^\psi \) be a sequence of complex numbers supported in~\( N/2 < n \leq N \), and set
\[ \| a_n^\psi \| := \sum_{ N/2 < n \leq N } \max_{\psi \bmod q_0} | a_n^\psi |^2. \]
Furthermore, let
\begin{align*}
  \Sigma_{1, \pm}^\psi (j) &:= \sum_{ N/2 < n \leq N } a_n^\psi \rho_j^\psi( \pm n, \mf a ), \qquad \Sigma_{2, \pm}^\psi (\mf c, t) := \sum_{ N/2 < n \leq N } a_n^\psi \phi_{ \mf c, t }^\psi( \pm n, \mf a ), \\
  \Sigma_3^\psi (j, k) &:= \sum_{ N/2 < n \leq N } a_n^\psi \lambda_{j, k}^\psi( n, \mf a ).
\end{align*}

Then the following variant of the large sieve inequalities holds.

\begin{theorem} \label{341}
  Let~\( \eps > 0 \).
  Let~\( T, N \geq 1 \) and~\( \mf a \in \mf A \).
  Let~\( a_n^\psi \) be as described above.
  Then
  \begin{align*}
    \frac1{ \phi(q_0) } \sum_{\psi \bmod q_0} \sum_{ | t_j^\psi | \leq T } ( 1 + |t_j^\psi| )^{ \pm \kappa(\psi) } \left| \Sigma_{1, \pm}^\psi (j) \right|^2 &\ll \left( T^2 + \frac{ N^{1 + \varepsilon} }q \right) \| a_n^\psi \|, \\
    \frac1{ \phi(q_0) } \sum_{\psi \bmod q_0} \sum_{ \mf c \text{ sing.} } \int_{-T}^T \! ( 1 + |t| )^{ \pm \kappa(\psi) } \left| \Sigma_{2, \pm}^\psi (\mf c, t) \right|^2 \, \d t &\ll \left( T^2 + \frac{ N^{1 + \varepsilon} }q \right) \| a_n^\psi \|, \\
    \frac1{ \phi(q_0) } \sum_{ \psi \bmod q_0 } \sum_\twoln{ \kappa(\psi) < k \leq T }{ k \equiv \kappa(\psi) \bmod 2 } \sum_{ 1 \leq j \leq \theta_k(q, \psi) } \left| \Sigma_3^\psi (j, k) \right|^2 &\ll \left( T^2 + \frac{ N^{1 + \varepsilon} }q \right) \| a_n^\psi \|,
  \end{align*}
  where the implicit constants depend only on~\( \varepsilon \).
\end{theorem}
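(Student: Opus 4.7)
The plan is to follow the classical derivation of the large sieve inequalities for Fourier coefficients of automorphic forms — as carried out in~\cite[Proposition~4.7]{Dra17} for a fixed nebentypus~\(\psi\) — and to identify the single place in that argument where a factor of~\(q_0\) would normally enter. This factor comes in through the pointwise Weil-type bound for the twisted Kloosterman sums~\( S_{\mf a \mf a}^\psi(m, n; c) \), and is precisely what \cref{3224} allows us to save on average over~\(\psi\).

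By the standard duality argument, each of the three inequalities is equivalent to a bilinear bound which, after applying the Kuznetsov formula (\cref{331}) with a suitable non-negative majorant test function~\(f\) — chosen in the classical way so that~\( \tilde f(t), \check f(t) \gg (1 + |t|)^{\pm \kappa(\psi)} \) for~\( |t| \leq T \), and~\( \dot f(k) \gg 1 \) for~\( k \leq T \) — reduces to the estimation of expressions of the form
\[ \frac1{ \phi(q_0) } \sum_{ \psi \bmod q_0 } \sum_{ m, n } \overline{ c_m^\psi } c_n^\psi \sum_c \frac{ S_{\mf a \mf a}^\psi( m, \pm n; c ) }c f\left( \frac{ 4\pi \sqrt{mn} }c \right). \]
By~\eqref{3214} the variable~\(c\) is restricted to~\( c \in q\ZZ \), and the support of~\(f\) localizes~\(c\) to the range~\( c \asymp \sqrt{mn}/T \). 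The diagonal contribution~\( m = n \) is independent of~\(\psi\) and yields the term~\( T^2 \|a_n^\psi\| \) as in the fixed-nebentypus case.

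For the off-diagonal terms, I would apply Cauchy--Schwarz in~\( m, n \) to separate the coefficients from the Kloosterman kernel, which leaves us with the task of estimating
\[ \frac1{ \phi(q_0) } \sum_{ \psi \bmod q_0 } \left| S_{\mf a \mf a}^\psi( m, \pm n; c ) \right|^2. \]
\Cref{3224} provides the bound~\( (m, n, c) c^{1 + \eps} \), which is a factor of~\(q_0\) better than the pointwise estimate~\( (m, n, c)^{1/2} (q_0 c)^{1/2 + \eps} \) that would have to be used in the unaveraged version. Summing over~\(c\) in the range~\( c \asymp \sqrt{mn}/T \) and over~\( m, n \sim N \) then produces the off-diagonal term~\( (N^{1 + \eps}/q) \|a_n^\psi\| \), and the three inequalities of the theorem follow.

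The main obstacle here is purely technical: one has to verify that the various standard manipulations in the derivation of the large sieve — the construction of the majorant test function, the treatment of the continuous spectrum and of any exceptional Maa{\ss} forms, and the application of Cauchy--Schwarz — interact cleanly with the averaging over~\(\psi\). Since the only input in the entire argument that depends on~\(\psi\) in an essential way is the Kloosterman bound just discussed, no deeper difficulty should arise, and the proof should reduce to a careful bookkeeping of the fixed-nebentypus argument with \cref{3224} substituted at the right place.
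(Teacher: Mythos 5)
Your proposal is correct and takes essentially the same route as the paper: the proof there likewise defers to the fixed-nebentypus argument of Deshouillers--Iwaniec and Drappeau~\cite[Proposition~4.7]{Dra17} and observes that the only input depending on~\(\psi\) in an essential way is the Weil-type bound for the bilinear forms of Kloosterman sums, which \cref{3224} supplies, after averaging over~\(\psi\), without the \( {q_0}^{1/2} \)~loss. The only caveat is that your description of the off-diagonal terms as ``one Cauchy--Schwarz plus the averaged Weil bound, summed over \( c \asymp \sqrt{mn}/T \)'' is an oversimplification (by itself it would leave cross terms of size roughly \( N^{3/2 + \eps} T^{-1/2} q^{-1} \)); one still needs the full Deshouillers--Iwaniec iteration using the two further bounds for the bilinear form \( B_{\mf a}(\lambda, \vartheta, c, N) \) from~\cite[Lemma~4.6]{Dra17} -- but those are uniform in~\(\psi\) and require no modification, so your identification of the single point where the averaging over~\(\psi\) intervenes is exactly the one the paper makes.
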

\begin{proof}
  The proof is in large parts identical to the proof of the original large sieve inequalities as given by Deshouillers and Iwaniec~\cite[Theorem~2]{DI82}, and its generalization to arbitrary nebentypus as worked out by Drappeau~\cite[Proposition~4.7]{Dra17}.
  We will therefore restrict ourselves to pointing out the main differences.
  
  Let~\( \kappa_0 \in \{ 0, 1 \} \), \( \vartheta \in (0, \infty) \) and~\( \lambda \in [0, \infty) \), and set
  \[ B_\mf a(\lambda, \vartheta, c, N) := \frac1{ \phi(q_0) } \sum_\twoln{\psi \bmod q_0}{ \kappa(\psi) = \kappa_0 } \sum_\twoln{ N/2 < n_1 \leq N }{ N/2 < n_2 \leq N } a_{n_1}^\psi \overline{ a_{n_2}^\psi } e^{ -\lambda \sqrt{n_1 n_2} } S_{\mf a \mf a}^\psi(n_1, n_2, c) e\left( 2 \frac{ \sqrt{n_1 n_2} }c \vartheta \right). \]
  Then we have the following bounds for this expression,
  \begin{align}
    | B_\mf a(\lambda, \vartheta, c, N) | &\ll c^{\frac12 + \eps} N \| a_n^\psi \|, \label{3402} \\
    | B_\mf a(\lambda, \vartheta, c, N) | &\ll ( c + N + \sqrt{\vartheta c N} ) \| a_n^\psi \|, \label{3446} \\
    | B_\mf a(\lambda, \vartheta, c, N) | &\ll \vartheta^{-\frac12} c^\frac12 N^{\frac12 + \eps} \| a_n^\psi \| \qquad\qquad \text{(for \( \vartheta < 2 \) and~\( c < N \)),} \label{3439}
  \end{align}
  with all the implicit constants depending at most on~\(\eps\).
  Here the first bound~\eqref{3402} is a direct consequence of \cref{3224}, while \eqref{3446}~and~\eqref{3439} are proven in~\cite[Lemma~4.6]{Dra17}.
  
  From this point on we can follow the proof of~\cite[Proposition~4.7]{Dra17}, always taking into account the extra summation over~\( \psi \).
  We leave the details to the reader.
\end{proof}

When taking care of the exceptional eigenvalues, the following weighted large sieve inequality will be useful.

\begin{theorem} \label{342}
  Let~\( \eps > 0 \).
  Let~\( 1 \leq N \leq q^2 \) and~\( \mf a \in \mf A \).
  Let~\( a_n^\psi \) be as described above.
  Then
  \[ \frac1{ \phi(q_0) } \sum_{\psi \bmod q_0} \sum_{ \text{\(t_j^\psi\)~exc.} } \left( \frac q{ N^\frac12 } \right)^{4 \i t_j^\psi} \left| \Sigma_{1, \pm}^\psi ( j ) \right|^2 \ll q^\eps N^{1 + \eps} \| a_n^\psi \|, \]
  where the implicit constant depends only on~\( \varepsilon \).
\end{theorem}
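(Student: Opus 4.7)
The plan is to apply~\cref{331} with a test function tailored to amplify the exceptional spectrum, in the spirit of the Deshouillers--Iwaniec amplification technique~\cite{DI82}, and to estimate the resulting Kloosterman-sum averages using~\cref{3224}.

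First, I would construct a non-negative, smooth, compactly supported function~\( f : (0, \infty) \to [0, \infty) \) concentrated on~\( \eta \asymp q^2 / N \), whose Bessel transforms~\( \tilde f \),~\( \check f \) and~\( \dot f \) from \eqref{3393}--\eqref{3382} are pointwise non-negative on the real spectrum and on the integers, and which satisfies
\[ \tilde f(\i r) \gg \left( \frac{ \sqrt N }q \right)^{4r} \qquad \text{for all } r \in [0, \theta]. \]
The amplification bound stems from the small-argument asymptotic~\( J_{-2r}(\eta) \sim (\eta/2)^{-2r}/\Gamma(1 - 2r) \), with the scale~\(q^2/N\) chosen precisely so that~\( \eta^{-2r} \) matches the desired weight. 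The non-negativity properties are classical in the style of~\cite[\S8]{DI82}, with minor case distinctions depending on~\( \kappa(\psi) \) and on the sign~\(\pm\). In the complementary range~\( N < q^{3/2} \), where the scale~\(q^2/N\) exceeds the maximal possible value~\(4\pi N/q\) of the Kuznetsov argument (using~\( c \geq q \) for~\( \mf a \in \mf A \)), the claim follows from~\cref{341} applied with~\( T = 1 \) after dropping the weight by means of~\( (\sqrt N/q)^{4 r_j^\psi} \leq 1 \), valid for~\( N \leq q^2 \).

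Second, plugging~\(f\) into~\cref{331} with~\( \mf a = \mf b \), multiplying by~\( \overline{a_m^\psi} a_n^\psi \) and summing over~\( m, n \in (N/2, N] \), the positivity of the transforms allows me to discard the non-exceptional contributions on the spectral side, yielding the pointwise-in-\(\psi\) estimate
\[ \sum_{\text{\(t_j^\psi\) exc.}} \left( \frac{ \sqrt N }q \right)^{4 r_j^\psi} \left| \Sigma_{1, \pm}^\psi(j) \right|^2 \ll \left| \sum_c \frac{1}{c} f\!\left( \frac{4\pi\sqrt{mn}}c \right) \sum_{m, n} \overline{a_m^\psi} a_n^\psi \, S_{\mf a \mf a}^\psi(m, \pm n; c) \right|. \]
By the support of~\(f\) and the constraint that~\(c\) be a positive multiple of~\(q\), the \(c\)-sum is effectively restricted to values~\( c \asymp N^2/q^2 \), of which there are~\( \O(N^2/q^3) \).

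Third, I would average over~\( \psi \bmod q_0 \) and invoke the bilinear form estimate~\eqref{3402} (or, equivalently, a Cauchy--Schwarz in~\(\psi\) combined directly with~\cref{3224}), after expanding~\(f\) in Fourier to separate the~\(\sqrt{mn}\)- and~\(c\)-dependencies. This gives a contribution of~\( \ll c^{-1/2 + \eps} N \|a_n^\psi\| \) per admissible~\(c\). Summing over the~\( \O(N^2/q^3) \) values~\( c \asymp N^2/q^2 \) yields~\( \ll (N/q)^{2 + \eps} \|a_n^\psi\| \), which is~\( \leq q^\eps N^{1 + \eps} \|a_n^\psi\| \) throughout the range~\( N \leq q^2 \). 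The main obstacle is the construction of the amplifier~\(f\) satisfying all the positivity, support, and lower-bound properties simultaneously; this is technically delicate but follows the classical template of~\cite[\S8]{DI82}, suitably adapted to the parity and sign cases arising from~\eqref{3393}--\eqref{3382}.
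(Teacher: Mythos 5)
There is a genuine error at the heart of your construction: the amplifier is built at the reciprocal of the correct scale, and as a result it amplifies in the wrong direction. For an exceptional parameter, write \( r = \i t_j^\psi \in (0, \theta] \); since \( N \leq q^2 \), the weight in the statement, \( (q / N^{1/2})^{4r} \), is \emph{at least}~\(1\), and the whole point of the theorem is that one can afford this extra growing factor. Your test function is concentrated at \( \eta \asymp q^2 / N \geq 1 \), and the quantity \( \eta^{-2r} \) there equals \( (N/q^2)^{2r} = (\sqrt N / q)^{4r} \leq 1 \) — the \emph{reciprocal} of the required weight, not "the desired weight" as you claim. Accordingly, the estimate you derive in your second step, with the factor \( (\sqrt N/q)^{4 r_j^\psi} \leq 1 \), is strictly weaker than the unweighted \cref{341} (with \( T \asymp 1 \)) and does not imply the theorem. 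Two further symptoms of the same inversion: the small-argument asymptotic \( J_{-2r}(\eta) \sim (\eta/2)^{-2r}/\Gamma(1-2r) \) is only valid for \( \eta \to 0 \) and cannot be invoked at \( \eta \asymp q^2/N \geq 1 \); and your fallback for \( N < q^{3/2} \), which "drops the weight by \( (\sqrt N/q)^{4r} \leq 1 \)", is unavailable for the true weight \( (q/\sqrt N)^{4r} \), which can be as large as \( q^{4\theta} \) and would destroy the bound for small~\(N\) if handled trivially. The correct scale is \( \eta \asymp N/q^2 \leq 1 \): then \( \tilde f(\i r) \gg (N/q^2)^{-2r} = (q/\sqrt N)^{4r} \), the Kuznetsov argument \( 4\pi\sqrt{mn}/c \) lands in the support for \( c \asymp q^2 \) (about \(q\) admissible moduli \( c \equiv 0 \bmod q \)), and the bilinear bound~\eqref{3402} gives \( \ll q \cdot (q^2)^{-1/2+\eps} N \|a_n^\psi\| \ll q^\eps N^{1+\eps}\|a_n^\psi\| \), which is the assertion. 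So your overall strategy is salvageable, but only after the scale is inverted.

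For comparison, the paper proceeds differently and more economically: it applies Cauchy--Schwarz in~\(n\) to reduce the bilinear form to the diagonal estimate
\[ \frac1{\phi(q_0)} \sum_{\psi} \sum_{\text{\(t_j^\psi\) exc.}} \left( \frac q{n^{1/2}} \right)^{4\i t_j^\psi} \big| \rho_j^\psi(\pm n, \mf a) \big|^2 \ll (qn)^\eps (q,n)^\frac12 \]
for an individual \(n\), which is the analogue of~\cite[(16.58)]{IK04} (itself proved by exactly the amplified Kuznetsov argument at scale \( n/q^2 \)), with \cref{3224} replacing Weil's bound so that the averaging over \( \psi \bmod q_0 \) removes the spurious \( {q_0}^{1/2} \). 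Summing \( (q,n)^{1/2} \) over \( n \leq N \) then yields \( q^\eps N^{1+\eps} \). If you repair the scale, your direct bilinear treatment is a legitimate alternative to this diagonal-plus-Cauchy--Schwarz route, but as written the proof does not establish the statement.
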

\begin{proof}
  This result is a direct consequence of the Cauchy-Schwarz inequality and the following estimate,
  \[ \frac1{ \phi(q_0) } \sum_\twoln{ \psi \bmod q_0 }{ \kappa(\psi) = \kappa_0 } \sum_{ \text{\(t_j^\psi\) exc.} } \left( \frac q{ n^\frac12 } \right)^{4\i t_j^\psi} \big| \rho_j^\psi(\pm n, \mf a) \big|^2 \ll (qn)^\eps (q, n)^\frac12, \]
  where~\( \kappa_0 \in \{ 0, 1 \} \).
  It can be proven in the same way as~\cite[(16.58)]{IK04} with the difference that in order to bound the Kloosterman sums, \cref{3224} has to be used instead of Weil's bound.
\end{proof}

\section{A shifted convolution problem} \label{4}

In this section, we consider the shifted convolution problem which is at the heart of the proof of Theorems~\ref{101}--\ref{106}.

As usual, let \( \chi_1 \)~mod~\(q_1\) and \( \chi_2 \)~mod~\(q_2\) be primitive Dirichlet characters, and set~\( q_1^\ast := \left( q_1, {q_2}^\infty \right) \) and~\( q_2^\ast := \left( q_2, {q_1}^\infty \right) \).
Furthermore, let~\( \delta > 0 \) be a fixed constant, let~\( \alpha, N, H \geq 1 \) be real numbers satisfying the condition
\begin{equation} \label{4021}
  \alpha^\frac23 H \leq N^{1 - \delta},
\end{equation}
and let~\( f : (0, \infty) \times \RR \to \CC \) be a smooth weight function, compactly supported in either
\[ \supp f \subset [ N/4, 2 N ] \times [ H/4, 2 H ] \qquad \text{or} \qquad \supp f \subset [ N/4, 2 N ] \times [ -2H, -H/4 ], \]
and with derivatives satisfying the bounds
\begin{equation} \label{4096}
  \frac{ \partial^{\nu_1 + \nu_2} }{ \partial \xi^{\nu_1} \partial\eta^{\nu_2} } f(\xi, \eta) \ll \frac1{ N^{\nu_1} H^{\nu_2} } \quad \text{for} \quad \nu_1, \nu_2 \geq 0.
\end{equation}
We are then interested in the following shifted convolution sum
\[ D_{\chi_1, \chi_2}(f, \alpha) := \sum_h \frac1h \sum_n \tau_{\chi_1, \chi_2}(n) \tau_{ \overline{\chi_1}, \overline{\chi_2} }(n + h) f(n, h) e\left( \alpha \frac hn \right), \]
and our aim will be to prove the following asymptotic formula.

\begin{proposition} \label{401}
  Let~\( \delta, \eps > 0 \).
  Let~\( f, \alpha, N, H \) be as described above.
  Then
  \begin{multline*}
    D_{\chi_1, \chi_2}(f, \alpha) = \sum_h \frac1h \int \! Q_{\chi_1, \chi_2}( \log\xi, \log(\xi + h); h ) f(\xi, h) e\left( \alpha \frac h\xi \right) \, \d\xi \\
      + \LO{ ( q_1^\ast q_1 + q_2^\ast q_2 )^\frac12 [q_1, q_2]^\frac12 N^{\frac12 + \eps} \bigg( 1 + \alpha \frac{ H^\frac12 }N \bigg) + \left( \frac{ q_1^\ast q_1 }{ {q_1^\ast}^{4\theta} } + \frac{ q_2^\ast q_2 }{ {q_2^\ast}^{4\theta} } \right)^\frac12 [q_1, q_2]^{\frac12 - 2\theta} N^{\frac12 + \theta + \eps} },
  \end{multline*}
  where~\( Q_{\chi_1, \chi_2}(X_1, X_2; h) \) is a polynomial in \(X_1\)~and~\(X_2\) of degree at most~\(2\) with coefficients depending only on~\(\chi_1\), \(\chi_2\) and~\(h\).
  The implicit constant depends at most on~\( \delta \), \( \eps \) and the implicit constants in~\eqref{4096}.
\end{proposition}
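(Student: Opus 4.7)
The strategy follows the classical route for shifted divisor problems, adapted to the presence of the characters \( \chi_1, \chi_2 \): open one of the divisor functions, apply Voronoi to the resulting sum in arithmetic progression, encode the resulting twisted Kloosterman sums via Dirichlet characters, apply the Kuznetsov formula, and close with the large sieve.

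First I would write
\[ \tau_{\overline{\chi_1},\overline{\chi_2}}(n+h) = \sum_{cd = n+h} \overline{\chi_1}(c)\, \overline{\chi_2}(d), \]
split the $(c,d)$-sum according to which of the two divisors is the smaller one so that in each piece the small variable ranges up to \( \asymp N^{1/2} \), and treat the two ranges symmetrically. For each small $c$ the inner sum takes the shape
\[ \sum_{n \equiv -h\,(c)} \tau_{\chi_1,\chi_2}(n)\, \overline{\chi_2}\!\left(\tfrac{n+h}{c}\right) f(n,h)\, e\!\left(\tfrac{\alpha h}{n}\right), \]
and a further splitting modulo $q_2$ resolves the character twist on $(n+h)/c$. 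Applying \cref{232} to the resulting sum in arithmetic progression, the polynomial part \( \Pi_{\chi_1,\chi_2} \) produces, after recombining the pieces, the integral against the polynomial $Q_{\chi_1,\chi_2}$ in the main term, while the error part is a dual sum weighted by the twisted sums \( T_{\chi_1,\chi_2}(n; c_0, \pm h) \) integrated against Bessel transforms of \( f(\xi,h)\, e(\alpha h/\xi) \).

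Using \cref{233} to replace \( B^+_{\chi_1,\chi_2} \) by its oscillating asymptotic, stationary-phase analysis in $\xi$ -- with the combined phase coming from the Bessel kernel and from \( e(\alpha h/\xi) \) -- localizes the dual variable $n$ to a controlled window; the hypothesis \( \alpha^{2/3} H \le N^{1 - \delta} \) is exactly what ensures that this analysis is valid and that contributions outside the stationary window decay arbitrarily fast after repeated integration by parts. What remains is a sum of Kloosterman sums of the shape appearing on the left-hand side of the Kuznetsov formula \eqref{3343}, except that the modulus carries a hidden character twist coming from the $T$-sum. Following the introductory remark around \eqref{1078}, I would expand that twisted Kloosterman sum in Dirichlet characters $\psi$ of a conductor dividing $q_1^\ast q_1$ (respectively $q_2^\ast q_2$); this separates the $c$-variable from the shift $h$ and from the residue classes modulo $q_1,q_2$, at the unavoidable cost of a Gauss-sum factor of size \( (q_1^\ast q_1 + q_2^\ast q_2)^{1/2} \), which is precisely the source of the first term in the stated error.

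With the $c$-sum now in proper Kuznetsov form, \cref{331} converts it into a spectral sum over Maa{\ss} forms, holomorphic forms and Eisenstein series of level $q \mid [c_0, q_1 q_2]$ with nebentypus $\psi$. A Cauchy--Schwarz in the remaining variables $(n,h)$ reduces the estimation to bilinear forms of exactly the type bounded by \cref{341}, while the exceptional spectrum is handled separately with \cref{342} and produces the $\theta$-dependent second term in the error. The principal difficulty is not any single step but the careful bookkeeping of the various moduli \( [c_0, q_1], [c_0, q_2], q_1^\ast, q_2^\ast, q_0 \) and the Kuznetsov level $q$: these choices must be reconciled so that, on the one hand, the \( q_0^{1/2} \) loss in a single twisted Kloosterman sum is reabsorbed through the averaging over $\psi$ built into \cref{3224} and \cref{341}, and, on the other, the final bound comes out cleanly in terms of \( q_1^\ast q_1 + q_2^\ast q_2 \) and \( [q_1, q_2] \) as stated. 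Getting this compatibility right is what turns the outline above into an actual proof of the proposition.
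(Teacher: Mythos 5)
Your outline reproduces the paper's own proof route essentially step for step: opening the shifted divisor function with the small divisor $\ll N^{1/2}$, Voronoi summation in arithmetic progressions via \cref{232} (with the polynomial part yielding the main term), the character expansion of the resulting twisted Kloosterman sums in the spirit of~\eqref{1078}, the Kuznetsov formula in the form~\eqref{3343}, and Cauchy--Schwarz combined with the large sieve inequalities of \cref{341} and \cref{342} for the regular and exceptional spectrum respectively. The only caveats are bookkeeping details you already flag as such (the paper's characters $\psi$ run modulo $q_2^\ast/h^\ast$ and the Kuznetsov level is $h^\ast[q_1,q_2]$, not the moduli you guess, and one also needs a Fourier-inversion step to separate the shift from the dual Voronoi variable before Cauchy--Schwarz), so the proposal is a correct plan of the same argument.
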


Remember that \(\theta\) denotes the bound in the Ramanujan-Petersson conjecture (see \cref{31}).
Here we are only concerned with the evaluation of the sum over~\(n\), while we will take care of the remaining sum over~\(h\) at a later stage.
Nevertheless, the additional average over~\(h\) will simplify some of the estimations in the proof.

The polynomial~\( Q_{\chi_1, \chi_2}(X_1, X_2; h) \) can be stated in fairly explicit terms.
Let
\begin{equation} \label{4086}
  \psi_z(q) := \sum_{d \mid q} \frac{ \mu(d) }{ d^{1 + z} }, \qquad Z_q(z) := \psi_z(q) z \zeta(z + 1) \qquad \text{and} \qquad \Delta_z := \frac\partial{\partial z} \bigg|_{z = 0}.
\end{equation}
Then, if~\( \chi_1 = \chi_2 \), it is the quadratic polynomial given by
\begin{equation} \label{4065}
  Q_{\chi_1, \chi_2}(\log\xi_1, \log\xi_2; h) := \Delta_{z_1} \Delta_{z_2} {\xi_1}^{z_1} {\xi_2}^{z_2} Z_{q_1}(2 z_1) Z_{q_2}(2 z_2) \frac{ r_{q_1}(h) }{q_1} \sum_\twoln{ c = 1 }{ (c, q_1) = 1 }^\infty \frac{ r_c(h) }{ c^{2 + 2 z_1 + 2 z_2} },
\end{equation}
while if~\( \chi_1 \neq \chi_2 \), it is simply a constant, namely, in the case~\( q_1 = q_2 \),
\begin{align*}
  &Q_{\chi_1, \chi_2}(X_1, X_2; h) := 2 \left| L( 1, \chi_1 \overline{\chi_2} ) \right|^2 \frac{ r_{q_1}(h) }{q_1} \sum_\twoln{c = 1}{ (c, q_1) = 1 }^\infty \frac{ r_c(h) }{ c^2 } \\
    &\quad + L( 1, \overline{\chi_1} \chi_2 )^2 \frac{ G( \overline{\chi_1} \chi_2, h) }{ \overline{ G(\chi_1) } G( \chi_2 ) } \sum_{c = 1}^\infty \frac{ r_c(h) ( \overline{\chi_1} \chi_2 )^2(c) }{ c^2 } + L( 1, \chi_1 \overline{\chi_2} )^2 \frac{ G( \chi_1 \overline{\chi_2}, h) }{ G(\chi_1) \overline{ G( \chi_2 ) } } \sum_{c = 1}^\infty \frac{ r_c(h) ( \chi_1 \overline{\chi_2} )^2(c) }{ c^2 },
\end{align*}
and, in the case~\( q_1 \neq q_2 \),
\[ Q_{\chi_1, \chi_2}(X_1, X_2; h) := \left| L(1, \overline{\chi_1} \chi_2 ) \right|^2 \sum_\twoln{c = 1}{ (c, q_1) = 1 }^\infty \frac{ r_{c q_2}(h) }{c^2 q_2} + \left| L( 1, \chi_1 \overline{\chi_2} ) \right|^2 \sum_\twoln{c = 1}{ (c, q_2) = 1 }^\infty \frac{ r_{c q_1}(h) }{c^2 q_1}. \]

\subsection{Initial transformations} \label{41}

Let~\( u : (0, \infty) \to [0, 1] \) be a smooth and compactly supported weight function which satisfies the conditions
\begin{equation} \label{4195}
  \supp u \subset [ 1/2, 2 ] \qquad \text{and} \qquad \sum_{j \in \ZZ} u\left( \frac\xi{2^j} \right) = 1 \quad \text{for} \quad \xi \in (0, \infty).
\end{equation}
We set
\[ u_0(\xi) := \sum_{i \leq 0} u\left( \frac{8\xi}{2^i \sqrt{N} } \right) \qquad \text{and} \qquad u_j(\xi) = u\left( \frac{8\xi}{ 2^j \sqrt{N} } \right) \quad \text{for} \quad j \geq 1. \]
We start the proof of \cref{401} by opening the divisor function~\( \tau_{ \overline{\chi_1}, \overline{\chi_2} }(n) \) and localizing the two new variables in dyadic intervals via the smooth partition of unity defined above.
This way our original sum~\( D_{\chi_1, \chi_2}(f, \alpha) \) is split up into the sums
\begin{equation} \label{4192}
  D_{j_1, j_2} := \sum_{n_1, n_2, h} \overline{\chi_1}(n_1) \overline{\chi_2}(n_2) \tau_{\chi_1, \chi_2}(n_1 n_2 - h) u_{j_1}(n_1) u_{j_2}(n_2) \frac{ f(n_1 n_2 - h, h) }h e\left( \alpha \frac h{n_1 n_2 - h} \right),
\end{equation}
with \(j_1\)~and~\(j_2\) ranging over~\( 0 \leq j_1, j_2 \ll \log N \).
Note that~\( D_{0, 0} \) is empty.

Since the expression~\eqref{4192} is symmetric in \(n_1\)~and~\(n_2\), we can assume without loss of generality that~\( j_2 \geq 1 \).
The variables \(n_1\)~and~\(n_2\) are then supported in the ranges
\[ n_1 \asymp N_1 \qquad \text{and} \quad \quad n_2 \asymp N_2 \qquad \text{with} \qquad N_2 := 2^{j_2 - 3} N^{1/2}, \quad N_1 := N / N_2, \]
and we have~\( N_1 \ll N^{1/2} \ll N_2 \).

In~\( D_{j_1, j_2} \) we split the variable~\(n_2\) into residue classes modulo~\(q_2\), so that the sum becomes
\[ D_{j_1, j_2} = \sum_{n_1, h} \overline{\chi_1}(n_1) \sum_{a_2 \bmod q_2} \overline{\chi_2}(a_2) \sum_{ m \equiv n_1 a_2 - h \bmod n_1 q_2 } \tau_{\chi_1, \chi_2}(m) g_{n_1, h}(m), \]
with
\[ g_{n_1, h}(\xi) := u_{j_1}(n_1) u_{j_2}\left( \frac{ \xi + h }{n_1} \right) \frac{ f( \xi, h ) }h e\left( \alpha \frac h\xi \right). \]
At this point, we use \cref{232} to evaluate the sum over~\(m\), and get
\[ D_{j_1, j_2} = \Sigma_{j_1, j_2}^0 + \Sigma_{j_1, j_2}^+ + \Sigma_{j_1, j_2}^-, \]
where~\( \Sigma_{j_1, j_2}^0 \) takes the form
\begin{align}
  \Sigma_{j_1, j_2}^0 &:= \frac1{ q_2 } \sum_\twoln{c, n, h}{c \mid n q_2} \frac{ \overline{\chi_1}(n) }{cn} \sum_\twoln{a_0 \bmod c}{ (a_0, c) = 1 } e\left( \frac{ a_0 h }c \right) G\left( \overline{\chi_2}, -a_0 \frac{n q_2}c \right) \int \! \Pi_{\chi_1, \chi_2}( \log\xi; c, a_0 ) g_{n, h}(\xi) \, \d\xi, \label{4164} \\
  \intertext{and where the other two sums are given by}
  \Sigma_{j_1, j_2}^\pm &:= \frac1{ q_2 } \sum_{n, h} \frac{ \overline{\chi_1}(n) }n \sum_{ c \mid n q_2 } \sum_{m = 1}^\infty \frac{ (c q_2)^\frac12 K_{\chi_1, \chi_2}^\pm(m, n, h, c) }{ [c, q_1]^\frac12 [c, q_2]^\frac12 } \int \! B_{\chi_1, \chi_2}^\pm\left( \frac{ (m \xi)^\frac12 }{ [c, q_1]^\frac12 [c, q_2]^\frac12 } \right) g_{n, h}(\xi) \, \d\xi, \nonumber
\end{align}
with
\begin{equation} \label{4125}
  K_{\chi_1, \chi_2}^\pm(m, n, h, c) := \frac1{ {q_2}^\frac12 } \sum_{a_2 \bmod q_2} \overline{\chi_2}(a_2) T_{\chi_1, \chi_2}( m; c, \pm ( n a_2 - h ) ).
\end{equation}

As we will show in \cref{45}, the contribution coming from the terms~\( \Sigma_{j_1, j_2}^0 \) together forms the main term in \cref{401}.
Before coming to that, we will however first take care of the other two sums~\( \Sigma_{j_1, j_2}^\pm \).
Once more it will be advantageous to localize the variable~\(m\) in a dyadic interval, so instead of looking at these sums directly, we will consider
\[ \Sigma_{j_1, j_2}^\pm(M) := \sum_{m, n, h} \frac{ \overline{\chi_1}(n) }n u\left( \frac mM \right) \sum_{ c \mid n q_2 } \frac{ c^\frac12 K_{\chi_1, \chi_2}^\pm(m, n, h, c) }{ {q_2}^\frac12 [c, q_1]^\frac12 [c, q_2]^\frac12 } \int \! B_{\chi_1, \chi_2}^\pm\left( \frac{ (m \xi)^\frac12 }{ [c, q_1]^\frac12 [c, q_2]^\frac12 } \right) g_{n, h}(\xi) \, \d\xi, \]
with the weight function~\(u\) as defined in~\eqref{4195}.

\subsection{Evaluation of~\texorpdfstring{\( K_{\chi_1, \chi_2}^\pm(m, n, h, c) \)}{K(m, n, h, c)}} \label{42}

Before going any further, we first need to evaluate the exponential sum~\eqref{4125} and express it in terms of Kloosterman sums.
This will allow us afterwards to make use of the Kuznetsov formula.

We decompose the moduli \(q_1\)~and~\(q_2\) as follows,
\[ q_1^\ast := \left( q_1, {q_2}^\infty \right), \quad q_2^\ast := \left( q_2, {q_1}^\infty \right) \qquad \text{and} \qquad q_1^\circ := q_1 / q_1^\ast, \quad q_2^\circ := q_2 / q_2^\ast, \]
and accordingly write the Dirichlet characters \( \chi_1 \)~and~\( \chi_2 \) as
\[ \chi_i = \chi_i^\ast \chi_i^\circ \quad \text{with} \quad \chi_i^\ast \bmod q_i^\ast \quad \text{and} \quad \chi_i^\circ \bmod q_i^\circ. \]
Note that the characters \( \chi_1^\ast \), \( \chi_1^\circ \), \( \chi_2^\ast \) and~\( \chi_2^\circ \) are all primitive.
We also set
\[ h = h^\ast h^\circ \quad \text{with} \quad h^\ast := (h, q_2^\ast) \quad \text{and} \quad h^\circ := h / h^\ast. \]
Furthermore, we define the quantity
\[ \kappa_{\chi_1, \chi_2} := \chi_1^\ast \overline{\chi_2^\ast}(q_1^\circ q_2^\circ) \chi_1^\circ \overline{\chi_2^\circ}( [q_1^\ast, q_2^\ast] ) \frac{ G( \chi_1^\circ \overline{\chi_2^\circ } ) }{ \sqrt{ q_1^\circ q_2^\circ} }, \]
as well as the exponential sum
\[ E_{\chi_1, \chi_2}(m; \psi) := \frac{ \overline\psi( q_1^\circ {q_2^\circ}^2 ) \overline{ G(\psi) } }{ \sqrt{ q_2^\ast / h^\ast } } \! \sum_{m_1 m_2 = m} \! \frac{ \overline{\chi_1^\circ} \chi_2^\circ(m_1) G( \chi_1^\ast \overline{ \chi_2^\ast \psi }, m_1 ) }{ \sqrt{ q_2^\ast [q_1^\ast, q_2^\ast] } } \!\! \sum_{ a \bmod q_2^\ast } \psi \chi_2^\ast(a) \overline{\chi_2^\ast}( a + m_2 ), \]
where \(\psi\) is a Dirichlet character mod~\( q_2^\ast / h^\ast \).

With the necessairy notation set up, we can now state the main result of this section.

\begin{lemma} \label{421}
  The sum~\( K_{ \chi_1, \chi_2 }^\pm(m, n, h, c) \) vanishes unless~\( (c, q_1 q_2) = q_2 \), in which case we have
  \begin{multline*}
    K_{ \chi_1, \chi_2 }^\pm(m, n, h, c) = \chi_2(\mp 1) \chi_1(n) \kappa_{\chi_1, \chi_2} \frac{ {q_2^\ast}^\frac12 }{ {h^\ast}^\frac12 } \frac1{ \phi(q_2^\ast / h^\ast) } \sum_{ \psi \bmod q_2^\ast / h^\ast } \psi(\mp h^\circ) E_{\chi_1, \chi_2}(m; \psi) \\
      \cdot \overline{\chi_1} \chi_2 \left( \frac{n q_2}c \right) \overline{\psi^2}\left( \frac c{q_2} \right) \frac{ S( \mp h^\ast h^\circ, \overline{ q_2^\ast [q_1, q_2^\ast] } m ; c / q_2^\ast ) }{ (c / q_2^\ast)^\frac12 },
  \end{multline*}
  where \( \psi \)~runs over all Dirichlet characters mod~\( q_2^\ast / h^\ast \).
\end{lemma}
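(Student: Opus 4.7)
The plan is to expand the definition of $K_{\chi_1,\chi_2}^\pm(m,n,h,c)$ step by step and then disentangle the resulting multiple sums by exploiting the multiplicative structure of the characters and of the common divisors of $c$, $q_1$, $q_2$. First, I would unfold $T_{\chi_1,\chi_2}(m;c,\pm(na_2-h))$ and the twisted divisor sum $\hat\tau_{\chi_1,\chi_2}(m;a/c)$, leaving an inner sum over $a\bmod c$ with $(a,c)=1$ against an additive character whose argument is $(b_1 b_2\mp na_2\pm h)/c$. Carrying out this Ramanujan sum imposes a congruence relating $b_1b_2$, $na_2$ and $h$ modulo divisors of $c$.

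Next, I would apply the Chinese Remainder Theorem to split the sums over $b_1\bmod[c,q_1]$ and $b_2\bmod[c,q_2]$ according to the decomposition $q_i=q_i^\ast q_i^\circ$ together with the part of $c$ coprime to $q_1q_2$. On the components lying over primes of $q_1^\circ$ or $q_2^\circ$ only, the twisted Gauss sums that arise are handled by \cref{234}: these force strong coprimality constraints which annihilate every term unless $(c,q_1q_2)=q_2$, exactly as claimed, while simultaneously producing the normalising factor $G(\chi_1^\circ\overline{\chi_2^\circ})/\sqrt{q_1^\circ q_2^\circ}$ absorbed into $\kappa_{\chi_1,\chi_2}$.

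Having imposed the vanishing condition, I would write $c=q_2c'$ with $(c',q_1q_2)=1$ and isolate the residual character sum on the $q_2^\ast$-block. A change of variables $b_2\mapsto a+m_2$ there recasts it as $\sum_{a\bmod q_2^\ast}\psi\chi_2^\ast(a)\overline{\chi_2^\ast}(a+m_2)$ visible in $E_{\chi_1,\chi_2}(m;\psi)$. To decouple the parameter $h^\circ$ from the sum over $c'$, I would insert the character orthogonality identity
\[
\mathbf{1}_{x\equiv y\,(\bmod q_2^\ast/h^\ast)}=\frac{1}{\phi(q_2^\ast/h^\ast)}\sum_{\psi\bmod q_2^\ast/h^\ast}\psi(x)\overline\psi(y),
\]
which manufactures the averaging over $\psi$ appearing in the target formula. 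What remains over $c'$ is then precisely the Kloosterman sum $S(\mp h,\overline{q_2^\ast[q_1,q_2^\ast]}\,m;c')$, together with the twist $\overline{\psi^2}(c/q_2)$ coming from the part of $\chi_2^\ast$ attached to $c'$.

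The main obstacle will be purely one of bookkeeping. The four moduli $q_1^\circ$, $q_1^\ast$, $q_2^\circ$, $q_2^\ast$ interact with the various parts of $c$ in genuinely different ways, and one must carefully collect the Gauss sums, Ramanujan sums and scaling factors $[c,q_1]^{1/2}$, $[c,q_2]^{1/2}$, $q_2^{1/2}$, $c^{1/2}$ so that the final normalisation matches. A smaller subtlety is that $h$ may share a nontrivial common divisor $h^\ast=(h,q_2^\ast)$ with $q_2^\ast$, which is exactly what forces the modulus of $\psi$ to drop from $q_2^\ast$ down to $q_2^\ast/h^\ast$, and why only the quotient $h^\circ$ enters the argument of $\psi$ in the stated formula.
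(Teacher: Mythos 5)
Your overall strategy --- unfolding $T_{\chi_1,\chi_2}$ and $\hat\tau_{\chi_1,\chi_2}$, splitting all variables by the Chinese Remainder Theorem along $q_1^\ast$, $q_1^\circ$, $q_2^\ast$, $q_2^\circ$ and the parts of $c$, invoking \cref{234} to evaluate the resulting Gau{\ss} sums and to extract the vanishing condition, and finally introducing an average over characters $\psi$ mod $q_2^\ast/h^\ast$ to decouple $h$ from $c$ --- is the one the paper follows. There is, however, one step that is genuinely wrong rather than a matter of bookkeeping: the reduction ``$c = q_2 c'$ with $(c', q_1 q_2) = 1$''. The constraints $q_2 \mid c$ and $c \mid n q_2$ with $(n, q_1) = 1$ only force $(c/q_2, q_1) = 1$; the quotient $c/q_2$ may still be divisible by primes of $q_2^\circ$. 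The paper therefore writes $c = c_0 c_2 q_2$ with $c_2 = (c/q_2, q_2^{\infty})$ and treats the block attached to $c_2 q_2^\circ$ separately; after the substitution $u^\circ \mapsto \overline{a^\circ}(u^\circ - m_2)$ this block yields a second Kloosterman sum of modulus $c_2 q_2^\circ$, which combines with the one of modulus $c_0$ to give the modulus $c/q_2^\ast = c_0 c_2 q_2^\circ$ appearing in the statement. With your decomposition the Kloosterman sum would come out with modulus $c/q_2$ and a spurious coprimality to $q_2$, which does not match the lemma.

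Two smaller remarks. First, the identity you quote is bare orthogonality; what is actually needed is the expansion of the additive character $e\bigl( \mp h \,\overline{q_1^\circ (c_0 c_2 q_2^\circ)^2}\, a^\ast / q_2^\ast \bigr)$ into multiplicative characters mod $q_2^\ast/h^\ast$ with Gau{\ss}-sum coefficients. The two are formally equivalent, but it is this version that produces the factor $\overline{G(\psi)}$ inside $E_{\chi_1,\chi_2}(m;\psi)$, the twist $\overline{\psi^2}(c/q_2)$, and the drop of the modulus from $q_2^\ast$ to $q_2^\ast/h^\ast$ (which requires first factoring out $h^\ast = (h, q_2^\ast)$, as you correctly note). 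Second, the paper does not collapse the unit sum over $a \bmod c$ into a Ramanujan sum: it first evaluates the Gau{\ss} sum over $a_2 \bmod q_2$ and keeps $a$ alive, so that after summing over one of the $b$-variables the Kloosterman sums emerge directly. Executing the Ramanujan sum first is possible in principle, but it routes you through a M\"obius-weighted sum over all divisors $d \mid c$ and a hyperbola congruence $b_1 b_2 \equiv \pm(n a_2 - h) \bmod d$, which is considerably more delicate if one wants the exact identity rather than an upper bound.
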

\begin{proof}
  Remember that~\( (n, q_1) = 1 \).
  Since~\( c \mid n q_2 \), the sum over~\( a_2 \) in~\eqref{4125} is simply a Gau{\ss} sum mod~\(q_2\), which can be evaluated directly.
  Hence \( K_{ \chi_1, \chi_2 }^\pm(m, n, h, c) \)~becomes
  \begin{equation} \label{4264}
    K_{\chi_1, \chi_2}^\pm(m, n, h, c) = \chi_2\left( \mp \frac{ n q_2 }c \right) \frac{ G( \overline{\chi_2} ) }{ {q_2}^\frac12 } \sum_{m_1 m_2 = m} \frac{ \tilde K_{\chi_1, \chi_2}(m_1, m_2, \pm h, c) }{ c^\frac12 [c, q_1]^\frac12 [c, q_2]^\frac12 },
  \end{equation}
  with
  \begin{equation} \label{4242}
    \tilde K_{\chi_1, \chi_2}(m_1, m_2, f, c) := \sum_\twoln{a \bmod c}{ (a, c) = 1 } \sum_\twoln{ b_1 \bmod [c, q_1] }{ b_2 \bmod [c, q_2] } \chi_1(b_1) \chi_2(a b_2) e\left( \frac{ a ( b_1 b_2 + f ) }c + \frac{m_1 b_1}{ [c, q_1] } + \frac{m_2 b_2}{ [c, q_2] } \right).
  \end{equation}
  In particular, we see that the sum vanishes unless~\( q_2 \mid c \).
  Moreover, we have~\( \left( c / q_2, q_1 \right) = 1 \).
  
  In view of this, we write the variable~\(c\) as
  \[ c = c_0 c_2 q_2 \quad \text{with} \quad c_2 := \left( c / q_2, {q_2}^\infty \right) \quad \text{and} \quad c_0 := c / ( c, {q_2}^\infty ). \]
  Note that with these definitions we have~\( (c_0, q_1 q_2) = (c_2, q_1) = 1 \).
  We write the variables \(a\),~\(b_1\)~and~\(b_2\) inside~\eqref{4242} accordingly as
  \begin{alignat*}{3}
    a &= a_0 c_2 q_2 + a_2 c_0 \qquad &\text{with} \qquad a_0 &\bmod c_0 \quad &\text{and} \quad a_2 &\bmod c_2 q_2, \\
    b_1 &= d_1 c_2 [q_1, q_2] + u_1 c_0 \qquad &\text{with} \qquad d_1 &\bmod c_0 \quad &\text{and} \quad u_1 &\bmod c_2 [q_1, q_2], \\
    b_2 &= d_2 c_2 q_2 + u_2 c_0 \qquad &\text{with} \qquad d_2 &\bmod c_0 \quad &\text{and} \quad u_2 &\bmod c_2 q_2.
  \end{alignat*}
  so that \( \tilde K_{\chi_1, \chi_2}(m_1, m_2, f, c) \) takes the form
  \begin{equation}
    \tilde K_{\chi_1, \chi_2}(m_1, m_2, f, c) = \chi_1 {\chi_2}^2 (c_0) \tilde K_{\chi_1, \chi_2}^{ (1) } \tilde K_{\chi_1, \chi_2}^{ (2) },
  \end{equation}
  with
  \begin{align*}
    \tilde K_{\chi_1, \chi_2}^{ (1) } &:= \sum_\twoln{ a_0 \bmod c_0 }{ (a_0, c_0) = 1 } \sum_\twoln{ d_1 \bmod c_0 }{d_2 \bmod c_0} e\left( \frac{ {c_2}^2 q_2 [q_1, q_2] a_0 d_1 d_2 + f a_0 + m_1 d_1 + m_2 d_2 }{ c_0 } \right), \\
    \tilde K_{\chi_1, \chi_2}^{ (2) } &:= \sum_\thrln{a_2 \bmod c_2 q_2}{ u_1 \bmod c_2 [q_1, q_2] }{u_2 \bmod c_2 q_2} \chi_1(u_1) \chi_2(a_2 u_2) e\left( \frac{ {c_0}^2 a_2 u_1 u_2 + f a_2 + m_2 u_2 }{c_2 q_2} + \frac{ m_1 u_1 }{ c_2 [q_1, q_2] } \right).
  \end{align*}
  
  In~\( \tilde K_{\chi_1, \chi_2}^{ (1) } \), we evaluate the sum over~\( d_2 \) and the whole expression immediately simplifies to
  \begin{equation}
    \tilde K_{\chi_1, \chi_2}^{ (1) } = c_0 S\big( -\overline{c_2 q_2^\circ} f, \overline{ c_2 q_2^\ast [q_1, q_2] } m_1 m_2; c_0 \big).
  \end{equation}
  In~\( \tilde K_{\chi_1, \chi_2}^{ (2) } \), we evaluate the sum over~\( u_2 \) via \cref{234} and get
  \[ \tilde K_{\chi_1, \chi_2}^{ (2) } = c_2 G(\chi_2) \sum_\thrln{a_2 \bmod c_2 q_2}{ u_1 \bmod c_2 [q_1, q_2] }{ a_2 u_1 \equiv - \overline{c_0}^2 m_2 \bmod c_2 } \chi_2(a_2) \chi_1(u_1) \overline{\chi_2}\left( \frac{ {c_0}^2 a_2 u_1 + m_2 }{c_2} \right) e\left( \frac{ f a_2 }{c_2 q_2} + \frac{ m_1 u_1 }{ c_2 [q_1, q_2] } \right). \]
  Here we write the variables \(a_2\)~and~\(u_1\) as
  \begin{alignat*}{2}
    a_2 &= a^\ast c_2 q_2^\circ + a^\circ q_2^\ast \quad &\text{with} \quad a^\circ &\bmod c_2 q_2^\circ \quad \text{and} \quad a^\ast \bmod q_2^\ast, \\
    u_1 &= u^\ast q_1^\circ q_2^\circ c_2 + v [q_1^\ast, q_2] c_2 + u^\circ [q_1, q_2^\ast] \quad &\text{with} \quad u^\circ &\bmod c_2 q_2^\circ, \quad v \bmod q_1^\circ \quad \text{and} \quad u^\ast \bmod [q_1^\ast, q_2^\ast],
  \end{alignat*}
  so that
  \begin{equation}
    \tilde K_{\chi_1, \chi_2}^{ (2) } = c_2 \overline{\chi_1^\circ}(m_1) \chi_1^\circ( c_2 [q_1^\ast, q_2] ) \chi_1^\ast( c_2 q_1^\circ q_2^\circ ) \chi_2^\circ(q_2^\ast) \overline{\chi_2^\ast}( q_1^\circ {c_0}^2 q_2^\circ ) G(\chi_1^\circ) G(\chi_2) \tilde K_{\chi_1, \chi_2}^{ (2 \text a) } \tilde K_{\chi_1, \chi_2}^{ (2 \text b) },
  \end{equation}
  with
  \begin{align*}
    \tilde K_{\chi_1, \chi_2}^{ (2 \text a) } &:= \sum_\twoln{ a^\circ, u^\circ \bmod c_2 q_2^\circ }{ a^\circ u^\circ \equiv - m_2 \bmod c_2 } \chi_2^\circ(a^\circ) \overline{\chi_2^\circ}\left( \frac{ a^\circ u^\circ + m_2 }{c_2} \right) e\bigg( \frac{ f a^\circ + m_1 \overline{ {c_0}^2 q_2^\ast [q_1, q_2^\ast] } u^\circ }{ c_2 q_2^\circ } \bigg), \\
    \tilde K_{\chi_1, \chi_2}^{ (2 \text b) } &:= \sum_\twoln{ a^\ast \bmod q_2^\ast }{ u^\ast \bmod [q_1^\ast, q_2^\ast] } \chi_1^\ast(u^\ast) \chi_2^\ast(a^\ast) \overline{\chi_2^\ast}( a^\ast u^\ast + m_2 ) e\bigg( \frac{ f \overline{ q_1^\circ (c_0 c_2 q_2^\circ)^2 } a^\ast }{q_2^\ast} \bigg) e\bigg( \frac{ m_1 u^\ast }{ [q_1^\ast, q_2^\ast] } \bigg).
  \end{align*}
  
  In the first sum~\( \tilde K_{\chi_1, \chi_2}^{ (2 \text a) } \), we make the substitution~\( u^\circ \mapsto \overline{ a^\circ } ( u^\circ - m_2 ) \), which leads to
  \begin{equation}
    \tilde K_{\chi_1, \chi_2}^{ (2 \text a) } = \chi_2^\circ(m_1) \overline{\chi_2^\circ}( {c_0}^2 q_2^\ast [q_1, q_2^\ast] ) G( \overline{\chi_2^\circ} ) S( -\overline{c_0} f, \overline{ c_0 q_2^\ast [q_1, q_2^\ast] } m_1 m_2 ; c_2 q_2^\circ ).
  \end{equation}
  In order to evaluate the second sum~\( \tilde K_{\chi_1, \chi_2}^{ (2 \text b) } \), we factorize~\(f\) as follows,
  \[ f = f^\ast f^\circ \quad \text{with} \quad f^\ast := (f, q_2^\ast) \quad \text{and} \quad f^\circ := f / f^\ast, \]
  and then express the first exponential in terms of Dirichlet characters mod~\( q_2^\ast / f^\ast \),
  \[ e\bigg( \frac{ f \overline{ q_1^\circ (c_0 c_2 q_2^\circ)^2 } a^\ast }{q_2^\ast} \bigg) = \frac1{ \phi(q_2^\ast / f^\ast) } \sum_{ \psi \bmod q_2^\ast / f^\ast } \psi( - f^\circ \overline{ q_1^\circ (c_0 c_2 q_2^\circ)^2 } a^\ast ) \overline{ G(\psi) }. \]
  This way we get
  \begin{equation} \label{4225}
    \tilde K_{\chi_1, \chi_2}^{ (2 \text b) } = \frac{ q_2^\ast [q_1^\ast, q_2^\ast]^\frac12 }{ {f^\ast}^\frac12 \phi(q_2^\ast / f^\ast) } \sum_{ \psi \bmod q_2^\ast / f^\ast } \psi(-f^\circ) \overline\psi( q_1^\circ(c_0 c_2 q_2^\circ)^2 ) \tilde E_{\chi_1, \chi_2}(m_1, m_2; \psi),
  \end{equation}
  with
  \[ \tilde E_{\chi_1, \chi_2}(m_1, m_2; \psi) := \frac{ G( \chi_1^\ast \overline{ \chi_2^\ast \psi }, m_1 ) \overline{ G(\psi) } }{ (q_2^\ast / f^\ast)^\frac12 {q_2^\ast}^\frac12 [q_1^\ast, q_2^\ast]^\frac12 } \sum_{ a \bmod q_2^\ast } \psi \chi_2^\ast(a) \overline{\chi_2^\ast}( a + m_2 ). \]
  Eventually, the lemma follows from~\eqref{4264}--\eqref{4225}.
\end{proof}

We conclude the section with the following bound for~\( E_{\chi_1, \chi_2}(m; \psi) \).

\begin{lemma}
  We have
  \[ \left| E_{\chi_1, \chi_2}(m; \psi) \right| \leq (m, q_1 q_2) \tau(m). \]
\end{lemma}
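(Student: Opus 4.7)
The plan is to bound $|E_{\chi_1, \chi_2}(m; \psi)|$ termwise by taking absolute values in its definition. The character values $|\overline\psi(q_1^\circ (q_2^\circ)^2)|$ and $|\overline{\chi_1^\circ}\chi_2^\circ(m_1)|$ are trivially bounded by $1$, and the standard estimate $|G(\psi)| \leq (q_2^\ast/h^\ast)^{1/2}$ for any character modulo $q_2^\ast/h^\ast$ exactly cancels the denominator $\sqrt{q_2^\ast/h^\ast}$.

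For the Gauss sum $G(\chi_1^\ast \overline{\chi_2^\ast \psi}, m_1)$, I would apply \cref{234}. The character $\chi_1^\ast \overline{\chi_2^\ast \psi}$ has modulus $[q_1^\ast, q_2^\ast]$ and is induced by some primitive character of conductor $q^\sharp \mid [q_1^\ast, q_2^\ast]$. \cref{234} shows the Gauss sum vanishes unless $[q_1^\ast, q_2^\ast]/q^\sharp$ divides $m_1$, in which case its modulus equals $[q_1^\ast, q_2^\ast]/\sqrt{q^\sharp}$. Since in the non-vanishing case $[q_1^\ast, q_2^\ast]/q^\sharp$ must then also divide $(m_1, [q_1^\ast, q_2^\ast])$, this gives
\[ |G(\chi_1^\ast \overline{\chi_2^\ast \psi}, m_1)| \leq \sqrt{[q_1^\ast, q_2^\ast] \cdot (m_1, [q_1^\ast, q_2^\ast])}, \]
and dividing by the denominator $\sqrt{q_2^\ast [q_1^\ast, q_2^\ast]}$ leaves the factor $\sqrt{(m_1, [q_1^\ast, q_2^\ast])/q_2^\ast}$.

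The main technical step is the bound
\[ \left| \sum_{a \bmod q_2^\ast} \psi \chi_2^\ast(a) \overline{\chi_2^\ast}(a + m_2) \right| \leq \sqrt{ q_2^\ast \cdot (m_2, q_2^\ast) }. \]
I would prove this by reducing to prime-power moduli via the Chinese remainder theorem: at each prime $p \mid q_2^\ast$ one obtains a shifted product of multiplicative characters on $(\ZZ/p^k \ZZ)^\times$, which can be treated by the Weil bound when $k = 1$ and by a direct Gauss-sum expansion for $k \geq 2$, yielding in each case the sharp local bound. This local analysis, where the constant $1$ must be preserved without appealing to deeper tools, is the main obstacle, as the various sub-cases (depending on whether each of $\psi\chi_2^\ast$, $\overline{\chi_2^\ast}$, and their product $\psi$ is locally primitive or principal) need to be checked separately.

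Combining all these estimates, the summand corresponding to a factorization $m = m_1 m_2$ is bounded by $\sqrt{(m_1, [q_1^\ast, q_2^\ast])(m_2, q_2^\ast)} \leq (m, q_1 q_2)$, since each of the two gcds individually divides $(m, q_1 q_2)$. Summing over the $\tau(m)$ divisor pairs $(m_1, m_2)$ then produces the desired bound $|E_{\chi_1, \chi_2}(m; \psi)| \leq (m, q_1 q_2) \tau(m)$.
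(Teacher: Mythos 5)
Your argument is correct and follows essentially the same route as the paper: take absolute values termwise, cancel \( |G(\psi)| \) against \( \sqrt{q_2^\ast/h^\ast} \), bound the Gau{\ss} sum \( G(\chi_1^\ast\overline{\chi_2^\ast\psi}, m_1) \) via the Montgomery--Vaughan evaluation \cite{MV75}, bound the shifted character sum by \( (m_2, q_2^\ast)^{1/2} {q_2^\ast}^{1/2} \), and sum over the \( \tau(m) \) factorizations. The only difference is that the paper simply cites \cite[Theorem~2.2]{OPP17} for the shifted character sum bound, so the prime-power case analysis you identify as the main obstacle is precisely the content of that reference and does not need to be reproved.
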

\begin{proof}
  This is a direct consequence of the bounds
  \[ \left| G( \chi_1^\ast \overline{ \chi_2^\ast \psi }, m_1 ) \right| \leq ( m_1, q_1^\ast q_2^\ast )^\frac12 [q_1^\ast, q_2^\ast]^\frac12 \quad \text{and} \quad \Bigg| \sum_{ a \bmod q_2^\ast } \psi \chi_2^\ast(a) \overline{\chi_2^\ast}( a + m_2 ) \Bigg| \leq (m_2, q_2^\ast)^\frac12 {q_2^\ast}^\frac12, \]
  see~\cite[Lemma~5.4]{MV75} and~\cite[Theorem~2.2]{OPP17}.
\end{proof}

\subsection{Technical preparations} \label{43}

Now that we have expressed the sum~\( \Sigma_{j_1, j_2}^\pm(M) \) as a sum of Kloosterman sums, the next step would be to apply the Kuznetsov formula.
However, before we can do so, some technical preparations need to be done first.

Let \( \iota_0 := 1 \)~or~\( \iota_0 := -1 \) depending on whether \(h\) is supported on the positive or negative real numbers.
Using \cref{421} we write the sum~\( \Sigma_{j_1, j_2}^\pm(M) \) as
\[ \Sigma_{j_1, j_2}^\pm(M) = \chi_2(\mp 1) \kappa_{\chi_1, \chi_2} \sum_{h^\ast \mid q_2^\ast} \sum_{n_0} \frac{ \overline{\chi_1} \chi_2(n_0) }{n_0} \Xi_{j_1, j_2}^\pm(M), \]
where
\begin{align*}
  \Xi_{j_1, j_2}^\pm(M) &:= \frac1{ \phi(h^\ast) } \sum_{ \psi \bmod h^\ast } \sum_\twoln{ h, m }{ (h, h^\ast) = 1 } \psi(\mp \iota_0 h) E_{\chi_1, \chi_2}(m; \psi) \\
     &\phantom{ := {} } \cdot \sum_{ ( c, q_1 ) = 1 } \overline{\psi^2}(c) \frac{ S( \mp \iota_0 h, \overline{ h^\ast [q_1, q_2^\ast] } m ; c q_2^\circ ) }{ c \sqrt{ h^\ast q_2^\circ [q_1, q_2] } } F_{h, m}^\pm\left( \frac{ 4\pi }c \sqrt{ \frac{ hm }{ h^\ast q_2^\circ [q_1, q_2] } } \right), \\
  \intertext{with}
  F_{h, m}^\pm(\eta) &:= \int \! B_{\chi_1, \chi_2}^\pm\left( \frac{ \eta \xi }{4 \pi} \right) U_{h, m}(\eta, \xi) e\left( \iota_0 \frac\alpha{\xi^2} \right) \, \d\xi, \\
  \intertext{and}
  U_{h, m}(\eta, \xi) &:= \iota_0 \frac{ \xi \eta }{ 2\pi } \sqrt{ \frac{ {h^\ast}^3 [q_1, q_2] }{ q_2^\ast hm } } u\left( \frac mM \right) f\left( \xi^2 h \frac{ q_2^\ast }{  h^\ast }, \iota_0 h \frac{q_2^\ast}{h^\ast} \right) \\
    &\phantom{ := {} } \cdot u_{j_1}\left( 4\pi \frac{ n_0 }{ \eta } \sqrt{ \frac{ hm }{ h^\ast q_2^\circ [q_1, q_2] } } \right) u_{j_2}\left( \frac{ \eta ( \xi^2 + \iota_0 ) }{ 4\pi n_0 } \sqrt{ \frac{ h q_2^\ast q_2 [q_1, q_2] }{h^\ast m} } \right).
\end{align*}
We also set
\[ X := \sqrt{ \frac NH }, \quad  Y := 4\pi \frac{ n_0 }{N_1} \sqrt{ \frac{HM}{ q_2 [q_1, q_2] } }, \quad Z := XY, \quad E := \frac{ h^\ast H }{ q_2^\ast }, \quad C := \frac{N_1}{n_0}, \quad F_0 := \frac{ h^\ast n_0 N_2}{ {q_2}^\frac12 H }. \]
With this notation, the different variables are supported in the intervals
\[ \xi \in [ X/3, 3X ], \quad \eta \in [ Y/120, 120Y ], \quad |h| \in [ E/4, 2E ], \quad m \in [ M/4, 2 M ], \quad c \in [ C/9, 9C ], \]
provided that~\(N\) is sufficiently large.
Also note that the variable~\(n_0\) is bounded by~\( n_0 \ll N_1 \).

We next want to show that the sums~\( \Sigma_{j_1, j_2}^\pm(M) \) become negligibly small when \(M\)~is in certain ranges.
Let~\( \eps_0 > 0 \) be an arbitrarily small but fixed constant, and set
\[ M_0^- := N^{\eps_0} \frac{ q_2 [q_1, q_2] }{16\pi^2 N} \left( \frac{N_1}{n_0} \right)^2 \qquad \text{and} \qquad M_0^+ := \frac{ q_2 [q_1, q_2] }{16\pi^2 N} \left( \frac{N_1}{n_0} \right)^2 \left( \frac{\alpha H}N \right)^2. \]
If \(M\) satisfies the bound~\( M > M_0^- \), which is equivalent to saying that~\( Z > N^{\eps_0 / 2} \), then by well-known properties of the \(K_0\)-Bessel function (see e.g.~\cite[(B.36)]{Iwa02}), we have
\[ F_{h, m}^-(\eta) \ll F_0 \exp\bigg( {-} \frac{ N^{ \eps_0 / 4 } M^{1/2} }{10} \bigg). \]
Hence the contribution coming from the sums~\( \Sigma_{j_1, j_2}^-(M) \) for such large~\(M\) is negligible.
By consequence, when looking at~\( \Sigma_{j_1, j_2}^-(M) \) we can therefore safely assume that~\( M \ll M_0^- \).

Similarly, if~\( M > M_0^- \), then we can express~\( F_{h, m}^+(\eta) \) by \cref{233} as
\[ F_{h, m}^+(\eta) = \int \! \left( W_{\chi_1, \chi_2}\left( \frac{ \xi \eta }{ 4 \pi } \right) U_{h, m}(\eta, \xi) e\bigg( \frac{ \iota_0 \alpha }{\xi^2} + \frac{ \xi \eta }{ 2 \pi } \bigg) + \overline{ W_{\chi_1, \chi_2}\left( \frac{ \xi \eta }{ 4 \pi } \right) } U_{h, m}(\eta, \xi) e\bigg( \frac{ \iota_0 \alpha }{\xi^2} - \frac{ \xi \eta }{ 2 \pi } \bigg) \right) \d\xi. \]
If we now make the additional assumption that
\begin{equation} \label{4350}
  \alpha X^{-3} \geq 10^6 Y \quad \text{or} \quad \alpha X^{-3} \leq 10^{-6} Y,
\end{equation}
then
\[ \left| \frac\partial{\partial \xi} \bigg( \iota_0 \frac{ \alpha }{\xi^2} \pm \frac{ \xi \eta }{ 2 \pi } \bigg) \right| = \left| -2 \iota_0 \frac\alpha{\xi^3} \pm \frac\eta{ 4\pi } \right| \geq \frac Y{10^4}, \]
so that by integrating by parts over~\(\xi\) repeatedly it follows that, for any~\( \nu \),
\[ F_{h, m}^+(\eta) \ll F_0 Z^{-\nu} \ll F_0 N^{ -\eps_0 \nu / 4 } M^{-\nu / 2}. \]
Hence we see that the contribution coming from those sums~\( \Sigma_{j_1, j_2}^+(M) \) where \(M\) satisfies both \( M > M_0^- \)~and~\eqref{4350} is negligible.
When looking at~\( \Sigma_{j_1, j_2}^+(M) \) we can therefore assume that \(M\) is either bounded by~\( M \ll M_0^- \), or that it satisfies the two conditions~\( M \gg M_0^- \) and~\( Y \asymp \alpha X^{-3} \).
Note that the latter condition~\( Y \asymp \alpha X^{-3} \) is equivalent to saying that~\( M \asymp M_0^+ \).

Due to technical reasons it is necessairy to separate the variables \(h\) and~\(m\) via Fourier inversion.
To this end, we define
\[ G_{\rho, \lambda}^\pm(\eta) := \frac1{ G_{\rho, \lambda}^0 } \iint \! F_{h, m}^\pm(\eta) e( -\rho h - \lambda m ) \, \d h \d m \qquad \text{with} \qquad G_{\rho, \lambda}^0 := \frac{ E M}{ (1 + \rho^2 E^2) (1 + \lambda^2 M^2) }, \]
so that
\begin{multline*}
  \Xi_{j_1, j_2}^\pm(M) = \iint \! G_{\rho, \lambda}^0 \frac1{ \phi(h^\ast) } \sum_{ \psi \bmod h^\ast } \sum_\twoln{ h, m }{ (h, h^\ast) = 1 } \psi(\mp \iota_0 h) e(\rho h) E_{\chi_1, \chi_2}(m; \psi) e(\lambda m) \\
     \cdot \sum_{ ( c, q_1 ) = 1 } \overline{\psi^2}(c) \frac{ S( \mp \iota_0 h, \overline{ h^\ast [q_1, q_2^\ast] } m ; c q_2^\circ ) }{ c \sqrt{ h^\ast q_2^\circ [q_1, q_2] } } G_{\rho, \lambda}^\pm\left( \frac{ 4\pi }c \sqrt{ \frac{ hm }{ h^\ast q_2^\circ [q_1, q_2] } } \right) \, \d\rho \, \d\lambda.
\end{multline*}

Last but not least, we need estimates for the integral transforms of~\( G_{\rho, \lambda}^\pm \) as defined in~\eqref{3393}--\eqref{3382}.
Note that in our case it suffices to consider the integral transforms associated to even characters.

We start with the case~\( M \leq M_0^- \).

\begin{lemma} \label{431}
  Assume that~\( M \leq M_0^- \).
  Then we have, for any~\( \nu \geq 0 \),
  \begin{alignat}{2}
    \tilde G_{\rho, \lambda}^\pm(\i t), \check G_{\rho, \lambda}^\pm(\i t) &\ll \frac{ F_0 }{ Y^{2t} } \quad &&\text{for} \quad 0 \leq t \leq 1/4, \label{4304} \\
    \tilde G_{\rho, \lambda}^\pm(t), \check G_{\rho, \lambda}^\pm(t), \dot G_{\rho, \lambda}^\pm(t) &\ll N^\eps F_0 \left( \frac{N^\eps}t \right)^\nu \quad &&\text{for} \quad t > 0. \label{4310}
  \end{alignat}
\end{lemma}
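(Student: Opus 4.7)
I would first establish uniform pointwise size and smoothness bounds for $G_{\rho, \lambda}^\pm(\eta)$, and then insert these into the defining integrals \eqref{3393}--\eqref{3382}. The condition $M \leq M_0^-$ forces $\eta\xi \ll Z \ll N^{\eps_0/2}$ throughout the $\xi$-integral defining $F_{h, m}^\pm$, so the Bessel function $B_{\chi_1, \chi_2}^\pm$ has bounded argument, with at most a logarithmic singularity at zero contributing a harmless $N^\eps$. Tracking the scaling in the amplitude $U_{h, m}(\eta, \xi)$ gives $|F_{h, m}^\pm(\eta)| \ll F_0 N^\eps$, and since the $(h, m)$-dependence enters only through compactly supported smooth factors, repeated integration by parts in $h$ and $m$ yields arbitrary polynomial decay in $\rho E$ and $\lambda M$. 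The normalisation by $G_{\rho, \lambda}^0$ is precisely designed to absorb this decay, so I obtain $|G_{\rho, \lambda}^\pm(\eta)| \ll F_0 N^\eps$ together with analogous bounds on its $\eta$-derivatives.

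For \eqref{4304}, I would substitute $t \mapsto \i t$ in \eqref{3393} and \eqref{3389}, using $\sinh(\pi \i t) = \i \sin(\pi t)$ and $K_{-\nu} = K_\nu$, to rewrite the two transforms as integrals of $G_{\rho, \lambda}^\pm(\eta)/\eta$ against the real-order Bessel combinations $2\pi (J_{-2t} - J_{2t})/\sin(\pi t)$ and $8 \cos(\pi t) K_{2t}$ respectively. Since $\eta \asymp Y \ll N^{\eps_0/2}$, the small-argument asymptotics
\[ J_{\pm 2t}(\eta) \asymp \frac{(\eta/2)^{\pm 2t}}{\Gamma(1 \pm 2t)}, \qquad K_{2t}(\eta) \asymp \frac{\Gamma(2t)}{2} (\eta/2)^{-2t} \]
apply, and the apparent singularities of $1/\sin(\pi t)$ and $\Gamma(2t)$ at $t = 0$ cancel cleanly against the vanishing factor $J_{-2t} - J_{2t}$ and the logarithmic behaviour of $K_0$ respectively. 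Multiplying by the pointwise bound on $G_{\rho, \lambda}^\pm$ and integrating over $\eta \asymp Y$ yields the claimed estimate $F_0/Y^{2t}$.

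For \eqref{4310}, the strategy is iterated integration by parts in $\eta$, based on the Bessel differential equation $\eta^2 u'' + \eta u' + (\eta^2 - \nu^2) u = 0$ (valid for $u = J_\nu$ and with a sign change for $u = K_\nu$). Solving for $u$ allows me to replace an occurrence of $J_{2 \i t}(\eta)$, $K_{2 \i t}(\eta)$ or $J_{k - 1}(\eta)$ under the integral by $\nu^{-2}$ times a linear combination of the same Bessel function and its first two derivatives; transferring the derivatives onto $G_{\rho, \lambda}^\pm(\eta)/\eta$ through integration by parts then produces a net factor of $1/t^2$ (or $1/k^2$ for $\dot G^\pm$), at the cost of at most $N^\eps$ per application thanks to the derivative bound above. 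Iterating this procedure yields the bound $F_0 N^\eps (N^\eps/t)^\nu$. The main technical obstacle lies in the preliminary step: making the size bound on $G_{\rho, \lambda}^\pm$ truly uniform in the parameters $\rho, \lambda, h^\ast, q_2^\ast$ and carefully handling the logarithmic singularities of $Y_0$ and $K_0$; once this is in place, both \eqref{4304} and \eqref{4310} follow from standard Bessel-function asymptotics.
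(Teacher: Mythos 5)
Your proposal is correct and follows essentially the same route as the paper: the paper likewise reduces matters to the function \(F_{h,m}^{\pm}\) and its first two partial derivatives in \(h\) and \(m\) (the normalisation \(G_{\rho,\lambda}^{0}\) absorbing the Fourier decay), records that these are supported in \([Y/120,120Y]\) with \(\nu\)-th \(\eta\)-derivative \(\ll N^{\eps}F_0(N^{\eps}/Y)^{\nu}\), and then simply cites \cite[Lemma~2.1]{BHM07} for the resulting bounds on the three Bessel transforms, whereas you reprove that lemma's content directly via small-argument asymptotics and iterated integration by parts through the Bessel differential equation -- which is exactly how such estimates are established. The only slip is immaterial: under \(M\leq M_0^-\) the argument of \(B_{\chi_1,\chi_2}^{\pm}\) is \(\ll Z\ll N^{\eps_0/2}\) rather than \(O(1)\), but since \(J_0\), \(Y_0\), \(K_0\) are bounded away from the origin this does not affect your estimates.
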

\begin{proof}
  It is clearly sufficient to look directly at the function~\( F_{h, m}^\pm(\eta) \) and its first two partial derivatives in~\( h \) and~\(m\).
  Noting that~\( Y \ll 1 \), and that
  \[ \supp F_{h, m}^\pm \subset [ Y/120, 120 Y ] \qquad \text{and} \qquad F_{h, m}^{ \pm (\nu) }(\eta) \ll N^\eps F_0 ( N^\eps / Y )^\nu \quad \text{for} \quad \nu \geq 0, \]
  we apply~\cite[Lemma~2.1]{BHM07} on~\( F_{h, m}^\pm(\eta) \) and its partial derivatives in~\(h\) and~\(m\), and \eqref{4304}~and~\eqref{4310} eventually follow.
\end{proof}

Next, we consider the case~\( M > M_0^- \), which requires a more delicate analysis.
As argued above, this only involves the function~\( F_{h, m}^+(\eta) \), and we can assume that~\( M \asymp M_0^+ \).
Remember that now we also have~\( Z > N^{\eps_0 / 2} \).

\begin{lemma} \label{432}
  Assume that~\( M > M_0^- \) and~\( M \asymp M_0^+ \).
  Then we have, for any~\( \nu \geq 0 \),
  \begin{alignat}{2}
    \tilde G_{\rho, \lambda}^+(\i t), \check G_{\rho, \lambda}^+(\i t) &\ll \frac{F_0}{N^\nu} \quad &&\text{for} \quad 0 \leq t \leq 1/4, \label{4393} \\
    \tilde G_{\rho, \lambda}^+(t), \check G_{\rho, \lambda}^+(t), \dot G_{\rho, \lambda}^+(t) &\ll N^\eps \frac{ F_0 }{ Z^2 } \left( \frac Zt \right)^\nu \quad &&\text{for} \quad t > 0. \label{4349}
  \end{alignat}
\end{lemma}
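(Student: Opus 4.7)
The strategy is to exploit the critical stationary-phase configuration in the $\xi$-integral defining $F_{h,m}^+(\eta)$: the hypothesis $M \asymp M_0^+$ forces $Y \asymp \alpha X^{-3}$, so the phase $\iota_0\alpha/\xi^2$ has a stationary point at $\xi_0 \asymp X$ lying in the support. After analysing this integral, the plan is to perform a second oscillatory analysis in the $\eta$-integral defining each Bessel transform. Throughout the argument we use the accompanying hypothesis $M > M_0^-$, i.e.\ $Z > N^{\eps_0/2}$, which provides the small parameter $Z^{-1}$ that powers the savings.

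First I would apply \cref{233}, valid since $\xi\eta/(4\pi) \gtrsim Z \gg 1$ on the support, to write $B_{\chi_1,\chi_2}^+(\xi\eta/(4\pi))$ as the real part of $2\, e(\xi\eta/(2\pi)) W_{\chi_1,\chi_2}(\xi\eta/(4\pi))$ with $|W^{(\nu)}| \ll (\xi\eta)^{-1/2-\nu}$. Then $F_{h,m}^+(\eta)$ splits into two oscillatory $\xi$-integrals with phases $\Phi_\pm(\xi) = \pm\xi\eta/(2\pi) + \iota_0\alpha/\xi^2$. For the sign which places the stationary point outside the support, $|\Phi_\pm'| \gtrsim Y$ throughout, and repeated integration by parts in $\xi$ extracts arbitrary $Z^{-\nu}$-savings; this branch is negligible. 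For the other sign, the unique stationary point $\xi_0(\eta) \asymp X$ lies in the support and satisfies $|\Phi_\pm''(\xi_0)| \asymp Y/X$, so standard stationary phase produces
\[
F_{h,m}^+(\eta) = F_0 Z^{-1} A_{h,m}(\eta)\, e(\Psi(\eta)) + O\bigl(F_0 Z^{-2}\bigr),
\]
where $\Psi(\eta) \propto \eta^{2/3}$ satisfies $|\Psi^{(k)}(\eta)| \asymp Z/Y^k$, and $A_{h,m}$ is a unit-size amplitude whose $\eta$-derivatives scale like $Y^{-\nu}$, uniformly in $h, m$. The smoothing in $h, m$ and the normalising denominator $G_{\rho,\lambda}^0$ preserve this structure, so $G_{\rho,\lambda}^+(\eta)$ admits the analogous expansion.

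For bound \eqref{4393}, where $0 \leq t \leq 1/4$, the Bessel kernels $J_{\pm 2it}, K_{2it}$ and their prefactors $2\pi\i t^\kappa/\sinh(\pi t)$ or $8\i^{-\kappa}\cosh(\pi t)$ act as smooth functions of $\eta$ on $\eta \asymp Y$ whose $\eta$-derivatives are of order $Y^{-\nu}$ (the effective ``$t$-frequency'' $t/\eta$ is already $O(1/Y)$ for such small $t$). Hence the only genuine oscillation in the $\eta$-integral is $\Psi(\eta)$, and repeated integration by parts in $\eta$ using $|\Psi'| \asymp Z/Y$ extracts a factor of $Z^{-1}$ per step. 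Since $Z > N^{\eps_0/2}$, performing sufficiently many steps yields the claimed arbitrary negative power of $N$. For bound \eqref{4349} with general $t > 0$, one expands the Bessel kernels via their oscillatory asymptotics into a bounded sum of pieces of the form $\eta^{-1/2} V_j(\eta)\, e(\Omega_j(t, \eta))$ with $V_j$ smooth and $|\partial_\eta^k \Omega_j| \lesssim t/\eta^k$, plus negligible terms. The combined phase derivative is $\asymp (Z + t)/Y$, so a single integration by parts in $\eta$ produces a factor $1/(Z + t) \leq 1/Z$, bounding the transform by $F_0/Z \cdot 1/Z = F_0/Z^2$; when $t \gg Z$, further integration by parts using the dominant Bessel phase yields the additional decay $(Z/t)^\nu$. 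The holomorphic transform $\dot G^+(k)$ is handled analogously, using that $J_{k-1}(\eta)$ admits a similar oscillatory expansion and is rapidly decaying when $k \gg \eta$.

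The main obstacle is the transitional regime $t \asymp Z$ in the second bound, where $\Psi'$ and $\partial_\eta \Omega_j$ may have opposite signs and the combined phase derivative degenerates, so the naive integration-by-parts step fails. In that range I would fall back on a genuine second stationary-phase application (with second derivative of size $|\phi''| \asymp Z/Y^2$, yielding stationary-phase savings of $Y/\sqrt{Z}$ which, combined with the prefactor $F_0/Z$ from the $\xi$-analysis and the $1/Y$ from $d\eta/\eta$, comes out to $F_0 Z^{-3/2}$, well within the claimed bound). The other technical point is the careful bookkeeping of the prefactors $2\pi\i t^\kappa/\sinh(\pi t)$ near $t = 0$ and the product $\cosh(\pi t) K_{2it}(\eta)$, whose exponential growth and decay must be verified to cancel uniformly across both the oscillatory and exponentially decaying regimes of $K_{2it}$.
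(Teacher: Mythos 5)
Your overall architecture is reasonable and is a legitimate variant of the paper's argument: you first do a one-dimensional stationary phase in~\(\xi\) to reduce \(F_{h,m}^+(\eta)\) to an oscillatory function with phase \(\Psi(\eta)\propto\alpha^{1/3}\eta^{2/3}\), \(\Psi'\asymp Z/Y\), and then analyse the \(\eta\)-integrals of the Bessel transforms; the paper instead keeps the double integral in \((\xi,\eta)\) and localizes around the two-dimensional stationary point with bump functions. Your treatment of \eqref{4393}, of the non-stationary branch \(\Phi^+\), and of the range \(t\gg N^\eps Z\) all match the paper in substance.

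There is, however, a genuine gap exactly at the point you yourself identify as the main obstacle, the transitional range \(t\asymp Z\). Your stationary-phase count there produces \(F_0Z^{-3/2}\), and you assert this is ``well within the claimed bound'' \(N^\eps F_0Z^{-2}\). It is not: since \(Z>N^{\eps_0/2}\gg1\), we have \(Z^{-3/2}>Z^{-2}\), so \(F_0Z^{-3/2}\) is \emph{weaker} than the target by a factor \(Z^{1/2}\). The missing half power of \(Z\) comes from the normalization of the Bessel kernel, which you have mishandled: in the critical range one has \(\eta\asymp Y\ll1\ll t\), so the large-argument expansion \(\eta^{-1/2}e(\Omega_j(t,\eta))\) you invoke is the wrong asymptotic regime. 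The correct uniform expansion is \(J_{2\i t}(\eta)=\Gamma(1+2\i t)^{-1}\eta^{2\i t}W_t(\eta)\) (this is how the paper proceeds, via~\cite[(B.28)]{Iwa02}), and the modulus \(|\Gamma(1+2\i t)|^{-1}\asymp e^{\pi t}t^{-1/2}\) combined with the prefactor \(\sinh(\pi t)^{-1}\) in~\eqref{3393} contributes precisely a factor \(t^{-1/2}\asymp Z^{-1/2}\). With that factor restored, \(F_0Z^{-3/2}\cdot Z^{-1/2}=F_0Z^{-2}\) and the bound \eqref{4349} closes; without it your argument proves only \(F_0Z^{-3/2}\). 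The same bookkeeping must be carried out for \(\check G^+\) (where \(\cosh(\pi t)K_{2\i t}(\eta)\) has to be paired with the analogous uniform asymptotics) before the claim is complete. A secondary point: justifying the clean expansion \(F_{h,m}^+=F_0Z^{-1}A(\eta)e(\Psi(\eta))+\O(F_0Z^{-2})\) with full derivative control on \(A\), uniformly in \(h,m,\rho,\lambda\), requires a quantitative stationary-phase lemma with an asymptotic expansion; the paper sidesteps this by never separating the \(\xi\)- and \(\eta\)-integrations, instead bounding the measure of the region where both partial derivatives of the joint phase are small.
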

\begin{proof}
  As before, it is enough to consider the function~\( F_{h, m}^+(\eta) \) and its first two partial derivatives in~\(h\) and~\(m\).
  We will restrict our attention here to~\( F_{h, m}^+(\eta) \) itself, since the analogous bounds for its derivatives can be derived similarly.
  Moreover, we will make the additional assumption~\( \iota_0 = -1 \), since the other case~\( \iota_0 = 1 \) can be treated almost identically.
  
  We start by using \cref{233} to write~\( F_{h, m}^+(\eta) \) as
  \[ F_{h, m}^+(\eta) = \Phi^+(\eta) + \Phi^-(\eta) \qquad \text{with} \qquad \Phi^\pm(\eta) := \int \! V_\xi^\pm(\eta) e\bigg( {-}\frac\alpha{\xi^2} \pm \frac{ \xi \eta }{ 2 \pi } \bigg) \, \d\xi, \]
  where \( V_\xi^+(\eta) \)~and~\( V_\xi^-(\eta) \) are given by
  \[ V_\xi^+(\eta) := W_{\chi_1, \chi_2}\left( \frac{ \xi \eta }{ 4 \pi } \right) U_{h, m}(\eta, \xi) \qquad \text{and} \qquad V_\xi^-(\eta) := \overline{ W_{\chi_1, \chi_2}\left( \frac{ \xi \eta }{ 4 \pi } \right) } U_{h, m}(\eta, \xi). \]
  Note that
  \[ \supp V_\xi^\pm \subset [ Y/120, 120 Y ] \qquad \text{and} \qquad V_\xi^{ \pm (\nu) }(\eta) \ll F_0 X^{-1} Z^{-\frac12} Y^{-\nu} \quad \text{for} \quad \nu \geq 0. \]
  Furthermore, the assumption~\eqref{4021} ensures that~\( Y \ll N^{-\eps} \).
  Hence we can apply~\cite[Lemma~2.6]{Top16} on the function~\( V_\xi^\pm(\eta) e( \pm (2\pi)^{-1} \xi \eta ) \), and get
  \begin{alignat*}{2}
    \tilde \Phi^\pm(\i t), \check \Phi^\pm(\i t) &\ll F_0 N^{-\nu} \quad &&\text{for} \quad 0 \leq t \leq 1/4, \\
    \tilde \Phi^\pm(t), \check \Phi^\pm(t), \dot \Phi^\pm(t) &\ll N^\eps F_0 Z^{-\frac32} (Z/t)^\nu \quad &&\text{for} \quad t > 0.
  \end{alignat*}
  This proves the first bound~\eqref{4393}, but also the second bound~\eqref{4349} in the range~\( t \gg N^\eps Z \).
  
  It thus remains to estimate the integral transforms of~\( \Phi^\pm(\eta) \) for~\( t \ll N^\eps Z \).
  In~\( \Phi^+(\eta) \), we integrate by parts over~\(\xi\) once and then apply one more time~\cite[Lemma~2.6]{Top16}.
  This gives
  \begin{equation} \label{4338}
    \tilde \Phi^+(t), \check \Phi^+(t), \dot \Phi^+(t) \ll N^\eps F_0 Z^{-5/2} \quad \text{for} \quad t > 0,
  \end{equation}
  which is sufficiently small.
  Unfortunately, we cannot repeat this procedure to get bounds for the integral transforms of~\( \Phi^-(\eta) \), since the argument of the exponential in~\( \Phi^-(\eta) \) may vanish.
  Instead, we will estimate the integral transforms manually via a stationary phase argument, and show that
  \begin{equation} \label{4329}
    \tilde \Phi^-(t), \check \Phi^-(t), \dot \Phi^-(t) \ll N^\eps F_0 Z^{-2} \quad \text{for} \quad t > 0.
  \end{equation}
  
  We begin with~\( \tilde \Phi^-(t) \).
  It will be convenient to have a smooth bump function of a certain shape at hand.
  To this end, we let~\( v_0 : \RR \to [0, 1] \) be a smooth and compactly supported function such that
  \[ v_0(\xi) = 1 \quad \text{for} \quad |\xi| \leq 1 \qquad \text{and} \qquad v_0(\xi) = 0 \quad \text{for} \quad |\xi| \geq 2, \]
  and furthermore define~\( v_1(\xi) := 1 - v_0(\xi) \).
  
  Assume first that~\( t \ll N^\eps \).
  Using~\cite[8.411.11]{GR07}, we write~\( \tilde \Phi^-(t) = I^+ + I^- \) with
  \[ I^\pm = -\iint \! \int_1^\infty \cos(2t \arcosh\zeta) \frac{ V_\xi^-(\eta) }{ { \eta \sqrt{\zeta^2 - 1} } } e\bigg( {-}\frac\alpha{\xi^2} - \frac{ \xi \eta }{ 2 \pi } \pm\frac{ \eta \zeta }{2\pi} \bigg) \, \d\zeta \d\eta \d\xi. \]
  Integrating by parts over~\(\eta\) repeatedly shows that the integral~\( I^- \) is arbitrarily small.
  We split the other integral into two parts~\( I^+ = I_0^+ + I_1^+ \) with
  \[ I_j^+ = -\iint \! \int_1^\infty \cos(2t \arcosh\zeta) v_j\left( \frac{ \xi - \zeta }{X/12} \right) \frac{ V_\xi^-(\eta) }{ { \eta \sqrt{\zeta^2 - 1} } } e\bigg( {-}\frac\alpha{\xi^2} - \frac{ \xi \eta }{ 2 \pi } \pm\frac{ \eta \zeta }{2\pi} \bigg) \, \d\zeta \d\eta \d\xi. \]
  In~\( I_1^+ \), we integrate by parts over~\( \eta \) repeatedly to see that its size is negligible.
  In~\( I_0^+ \), we observe that~\( \zeta \asymp X \) and integrate by parts over~\( \zeta \) repeatedly to see that this integral is also negligibly small.
  Hence \eqref{4329}~is certainly true.
  
  Now assume~\( N^\eps \ll t \ll N^\eps Z \).
  Since~\( Y \ll N^{-\eps} \), we can use~\cite[(B.28)]{Iwa02} to express the Bessel function~\( J_{2\i t}(\eta) \) inside the integral transform~\eqref{3393} as
  \[ J_{2 \i t}(\eta) = \Gamma(2\i t + 1)^{-1} \eta^{2\i t} W_t(\eta), \]
  where \( W_t(\eta) \) is a certain complex-valued function which, uniformly in~\(t\), satisfies the bounds
  \[ W_t^{ (\nu) }(\eta) \ll \eta^{-\nu} \quad \text{for} \quad \nu \geq 0. \]
  It follows that
  \[ \tilde \Phi^-(t) \ll t^{-\frac12} \left( | L^+ | + | L^- | \right), \]
  with
  \[ L^\pm := \iint \! e\left( A_0^\pm(\xi, \eta) \right) V_\xi^-(\eta) W_{\pm t}(\eta) \, \frac{\d\xi d\eta}\eta \qquad \text{and} \qquad A_0^\pm(\xi, \eta) := \pm\frac{ t \log\eta }\pi -\frac\alpha{\xi^2} - \frac{\xi \eta}{2\pi}. \]
  Integrating by parts over~\( \eta \) repeatedly shows that~\( L^- \) is negligibly small.
  By the same reasoning we see that~\( L^+ \) too is negligible, unless \(t\)~is of the size~\( t \asymp Z \) which we will henceforth assume.
  
  We split the double integral~\( L^+ \) via the weight functions \(v_0\)~and~\(v_1\) defined above into four parts~\( L^+ = L_{0, 0}^+ + L_{1, 0}^+ + L_{0, 1}^+ + L_{1, 1}^+ \), where
  \[ L_{j_1, j_2}^+ = \iint \! e\left( A_0^+(\xi, \eta) \right) v_{j_1}\left( \frac{ A_1(\xi, \eta) }{ N^\eps (X/Y)^\frac12 } \right) v_{j_2}\left( \frac{ A_2(\xi, \eta) }{ N^\eps (Y/X)^\frac12 } \right) V_\xi^-(\eta) W_t(\eta) \, \frac{\d\xi d\eta}\eta, \]
  with
  \[ A_1(\xi, \eta) := \frac\partial{\partial\eta} A_0^+(\xi, \eta) = \frac t{\pi\eta} - \frac\xi{2\pi} \qquad \text{and} \qquad A_2(\xi, \eta) := \frac\partial{\partial\xi} A_0^+(\xi, \eta) = \frac{2\alpha}{\xi^3} - \frac\eta{2\pi}. \]
  Integration by parts, either over~\(\xi\) or over~\(\eta\), shows once more that~\( L_{1, 0}^+ \), \( L_{0, 1}^+ \) and~\( L_{1, 1}^+ \) are all of negligible size, so that we can focus on the remaining integral~\( L_{0, 0}^+ \) .
  
  Here we make the substitution
  \[ (\xi, \eta) = \psi(\zeta_1, \zeta_2) \qquad \text{with} \qquad \psi(\zeta_1, \zeta_2) := \left( \alpha_0 + \zeta_1, 2t ( \alpha_0 + \zeta_1 + 2\pi \zeta_2 )^{-1} \right), \]
  where we have set~\( \alpha_0 := (2\pi\alpha)^{1/2} t^{-1/2} \).
  Note that~\( \alpha_0 \asymp X \) and~\( (\alpha_0 + \zeta_1) \asymp X \).
  This gives
  \[ L_{0, 0}^+ \ll \frac{ F_0 }{ Z^\frac32 } \frac YX \iint \! v_0\left( \frac{ A_1( \psi(\zeta_1, \zeta_2) ) }{ N^\eps (X/Y)^\frac12 } \right) v_0\left( \frac{ A_2( \psi(\zeta_1, \zeta_2) ) }{ N^\eps (Y/X)^\frac12 } \right) \, \d\zeta_1 \d\zeta_2. \]
  As we will show below, the two integration variables \(\zeta_1\)~and~\(\zeta_2\) are both supported in~\( \zeta_1, \zeta_2 \ll N^\eps (X/Y)^{1/2} \).
  As a consequence, it follows that~\( L_{0, 0}^+ \ll N^\eps F_0 Z^{-3/2} \), which in turn directly leads to~\eqref{4329}.
  
  Concerning~\( A_1( \psi(\zeta_1, \zeta_2) ) \), we have
  \[ A_1( \psi(\zeta_1, \zeta_2)) = \zeta_2, \]
  which immediately confirms that the integration variable~\( \zeta_2 \) is bounded by~\( N^\eps (X/Y)^{1/2} \).
  Concerning~\( A_2( \psi(\zeta_1, \zeta_2) ) \), a quick calculation shows that
  \[ A_2( \psi(\zeta_1, \zeta_2)) = -\frac{ t \zeta_1 ( 2\alpha_0 + \zeta_1 ) }{ \pi ( \alpha_0 + \zeta_1 )^3 } + \frac{ 2 \zeta_2 t }{ ( \alpha_0 + \zeta_1 ) ( \alpha_0 + \zeta_1 + 2\pi \zeta_2 ) }. \]
  Since the second summand on the right hand side is bounded by~\( N^\eps (Y/X)^{1/2} \), we see that for the expression~\( A_2( \psi(\zeta_1, \zeta_2) ) \) to be bounded by~\( N^\eps (Y/X)^{1/2} \), we must have
  \[ \frac{ t \zeta_1 ( \zeta_1 + 2\alpha_0 ) }{ \pi ( \zeta_1 + \alpha_0 )^3 } \ll N^\eps \frac{ Y^\frac12 }{ X^\frac12 }, \]
  which is possible only if~\( \zeta_1 \ll N^\eps (X/Y)^{1/2} \).
  
  The integral transform~\( \check \Phi^-(t) \) can be treated similarly by using suitable integral representations for the Bessel function~\( K_{2\i t}(\eta) \), for example~\cite[8.432.4]{GR07} and~\cite[(B.32) and~(B.34)]{Iwa02}.
  Finally, in order to bound the integral transform~\( \dot \Phi^-(t) \), we express the Bessel function~\( J_{k - 1}(\eta) \) via the integral representation~\cite[8.411.1]{GR07} and then integrate by parts repeatedly over~\( \eta \), which already gives the desired bound.
  This finishes the proof of \cref{432}.
\end{proof}

\subsection{Use of the Kuznetsov formula} \label{44}

We are finally ready to apply the Kuznetsov formula on the sums~\( \Xi_{j_1, j_2}^\pm(M) \).
Specifically, we will use \cref{331} in the form~\eqref{3343} with parameters
\[ \tilde \psi := \psi^2, \quad \tilde q_0 := h^\ast, \quad \tilde r := h^\ast [q_1, q_2^\ast], \quad \tilde s := q_2^\circ, \quad \tilde q := h^\ast [q_1, q_2]. \]
We will give the details only for~\( \Xi_{j_1, j_2}^+(M) \) and assume that~\( \iota_0 = -1 \), since the other sums and cases can all be treated in the same manner.

Using the Kuznetsov formula as described above leads to
\begin{align*}
  \Xi^+_{j_1, j_2}(M) = \iint \! G_{\rho, \lambda}^0 \left( \Xi_1 + \Xi_2 + \Xi_3 \right) \, \d\rho \d\lambda,
\end{align*}
where \(\Xi_1\), \(\Xi_2\) and~\( \Xi_3 \) are given by
\begin{align*}
  \Xi_1 &:= \frac1{ \phi(h^\ast) } \sum_{ \psi \bmod h^\ast } \sum_{ j \geq 0 } \tilde G_{\rho, \lambda}^+\big( t_j^{\psi^2} \big) \overline{ \Sigma_{1 \text a}^\psi (j) } \Sigma_{ 1 \text b }^\psi (j), \\
  \Xi_2 &:= \frac1{ \phi(h^\ast) } \sum_{ \psi \bmod h^\ast } \sum_{ \mf c \text{ sing.} } \frac1{4\pi} \int_{-\infty}^\infty \! \tilde G_{\rho, \lambda}^+(t) \overline{ \Sigma_{2 \text a}^\psi (\mf c, t) } \Sigma_{2 \text b}^\psi (\mf c, t) \, \d t, \\
  \Xi_3 &:= \frac1{ \phi(h^\ast) } \sum_{ \psi \bmod h^\ast } \sum_\twoln{ k \geq 2, \,\, k \equiv 0 \bmod 2 }{ 1 \leq j \leq \theta_k( h^\ast [q_1, q_2], \psi^2 ) } \dot G_{\rho, \lambda}^+(k) \overline{ \Sigma_{3 \text a}^\psi (j, k) } \Sigma_{3 \text b}^\psi (j, k),
\end{align*}
with
\begin{alignat*}{3}
  \Sigma_{1 \text a}^\psi (j) &:= \sum_{ E/4 < h \leq 2E } A_1^\psi(h) \rho_j^{\psi^2}( h, \infty ), \qquad &\Sigma_{2 \text a}^\psi (j) &:= \sum_{ M/4 < m \leq 2 M } A_2^\psi(m) \rho_j^{\psi^2}\left( m, 1 / q_2^\circ \right), \\
  \Sigma_{1 \text b}^\psi (\mf c, t) &:= \sum_{ E/4 < h \leq 2E } A_1^\psi(h) \phi_{ \mf c, t }^{\psi^2}( h, \infty ), \qquad & \Sigma_{2 \text b}^\psi (\mf c, t) &:= \sum_{ M / 4 < m \leq 2 M } A_2^\psi(m) \phi_{ \mf c, t }^{\psi^2}\left( m, 1 / q_2^\circ \right), \\
  \Sigma_{1 \text c}^\psi (j, k) &:= \sum_{ E/4 < h \leq 2E } A_1^\psi(h) \lambda_{j, k}^{\psi^2}( h, \infty ), \qquad &\Sigma_{2 \text c}^\psi (j, k) &:= \sum_{ M / 4 < m \leq 2 M } A_2^\psi(m) \lambda_{j, k}^{\psi^2}\left( m, 1 / q_2^\circ \right),
\end{alignat*}
and
\[ A_1^\psi(h) := \overline\psi(h) e(-\rho h), \qquad A_2^\psi(m) := E_{\chi_1, \chi_2}(m; \psi) e(\lambda m) e\left( -\frac{ \overline{q_2^\circ} m }{ h^\ast [q_1, q_2^\ast] } \right). \]

We first consider the case~\( M \ll M_0^- \).
We split the sum~\( \Xi_1 \) into three parts,
\[ \Xi_1 = \frac1{ \phi(h^\ast) } \sum_\twoln{\psi \bmod h^\ast}{ t_j^{\psi^2} \leq N^\eps} (\ldots) + \frac1{ \phi(h^\ast) } \sum_\twoln{\psi \bmod h^\ast}{ t_j^{\psi^2} > N^\eps } (\ldots) + \frac1{ \phi(h^\ast) } \sum_\twoln{\psi \bmod h^\ast}{ \text{\( t_j^{\psi^2} \)exc.} } (\ldots) =: \Xi_{1 \text a} + \Xi_{1 \text b} + \Xi_{1 \text c}. \]
By \cref{431} it is clear that the contribution coming from~\( \Xi_{1 \text b} \) is negligible.
Concerning~\( \Xi_{1 \text a} \), we make use of the bound~\eqref{4310} and apply Cauchy-Schwarz, so that
\[ \Xi_{1 \text a} \ll N^\eps F_0 \Bigg( \frac1{ \phi(h^\ast) } \sum_{ \psi \bmod h^\ast } \sum_{ t_j^{\psi^2} \leq N^\eps } \left| \Sigma_{1 \text a}^\psi(j) \right|^2 \Bigg)^\frac12 \Bigg( \frac1{ \phi(h^\ast) } \sum_{ \psi \bmod h^\ast } \sum_{ t_j^{\psi^2} \leq N^\eps } \left| \Sigma_{1 \text b}^\psi(j) \right|^2 \Bigg)^\frac12. \]
Applying \cref{341} on the sums inside the two factors then leads to
\[ \Xi_{1 \text a} \ll N^\eps F_0 \left( 1 + \frac{ E^\frac12 }{ ( h^\ast [q_1, q_2] )^\frac12 } \right) \left( 1 + \frac{M^\frac12}{ ( h^\ast [q_1, q_2] )^\frac12 } \right) E^\frac12 M^\frac12 \ll (q_2^\ast q_2 [q_1, q_2])^\frac12 N^{\frac12 + \eps}. \]
Note that we have made here implicitly use of the fact that, for a given Dirichlet character~\( \tilde \psi \)~mod~\(h^\ast\), there are at most \( {h^\ast}^\eps \)~many Dirichlet characteres~\( \psi \)~mod~\(h^\ast\) such that~\( \psi^2 = \tilde \psi \).

For~\( \Xi_{1 \text c} \) the same approach leads, for~\( H \gg h^\ast q_2^\ast [q_1, q_2]^2 \), to
\[ \Xi_{1 \text c} \ll (q_2^\ast q_2 [q_1, q_2])^\frac12 \left( \frac N{ ( q_2^\ast [q_1, q_2] )^2 } \right)^\theta N^{\frac12 + \eps}. \]
For~\( H \ll h^\ast q_2^\ast [q_1, q_2]^2 \) we make use of \cref{342} instead of \cref{341} to estimate the sum over~\(h\), which gives
\begin{align*}
  \Xi_{1 \text c} &\ll \frac{ N^\eps F_0 E^\theta }{ (Y h^\ast [q_1, q_2])^{2\theta} } \Bigg( \frac1{ \phi(h^\ast) } \sum_{ \psi \bmod h^\ast } \sum_{ \text{\( t_j^{\psi^2} \)exc.} } \left( \frac{ h^\ast [q_1, q_2] }{ E^\frac12 } \right)^{ 4\i t_j^{\psi^2} } \left| \Sigma_{1 \text a}^\psi(j) \right| \Bigg)^\frac12 \\
    &\phantom{ \ll \frac{ N^\eps F_0 E^\theta }{ (Y h^\ast [q_1, q_2])^{2\theta} } } \!\!\! \cdot \Bigg( \frac1{ \phi(h^\ast) } \sum_{ \psi \bmod h^\ast } \sum_{ \text{\( t_j^{\psi^2} \)exc.} } \left| \Sigma_{1 \text b}^\psi(j) \right|^2 \Bigg)^\frac12 \\
    &\ll ( q_2^\ast q_2 [q_1, q_2] )^\frac12 \left( \frac N{ ( q_2^\ast [q_1, q_2] )^2 } \right)^\theta N^{\frac12 + \eps}.
\end{align*}
The two other sums \( \Xi_2 \)~and~\( \Xi_3 \) can be estimated similarly, except that there are no exceptional eigenvalues to be taken care of.
The upper bound we get for these two sums is the same as the one for~\( \Xi_{1 \text a} \).

Next we look at the case where \( M \gg M_0^- \) and~\( M \asymp M_0^+ \).
As before we split the sum~\( \Xi_1 \) into three parts,
\[ \Xi_1 = \frac1{ \phi(h^\ast) } \sum_\twoln{\psi \bmod h^\ast}{ t_j^{\psi^2} \leq N^\eps Z } (\ldots) + \frac1{ \phi(h^\ast) } \sum_\twoln{\psi \bmod h^\ast}{ t_j^{\psi^2} > N^\eps Z } (\ldots) + \frac1{ \phi(h^\ast) } \sum_\twoln{\psi \bmod h^\ast}{ \text{\( t_j^{\psi^2} \)exc.} } (\ldots) =: \Xi_{1 \text a} + \Xi_{1 \text b} + \Xi_{1 \text c}. \]
By \cref{432} we see that the contribution coming from both the terms~\( \Xi_{1 \text b} \) and~\( \Xi_{1 \text c} \) is negligible.
For~\( \Xi_{1 \text a} \) we get in the same way as above, using~\eqref{4349}, Cauchy-Schwarz and \cref{341},
\begin{align*}
  \Xi_{1 \text a} &\ll \frac{ N^\eps F_0 }{Z^2} \left( Z + \frac{ E^\frac12 }{ ( h^\ast [q_1, q_2] )^\frac12 } \right) \left( Z + \frac{M^\frac12}{ ( h^\ast [q_1, q_2] )^\frac12 } \right) E^\frac12 M^\frac12 \\
    &\ll N^\eps (q_2^\ast q_2 [q_1, q_2])^\frac12 N^{\frac12 + \eps} \left( 1 + \alpha \frac{ H^\frac12 }N \right).
\end{align*}
The same bound also holds for~\( \Xi_2 \) and~\( \Xi_3 \), as can be deduced analogously.

Putting everything together we arrive at
\[ \Xi_{j_1, j_2}^-(M) \ll ( q_2^\ast q_2 [q_1, q_2] )^\frac12 N^{\frac12 + \eps} \left( 1 + \alpha \frac{ H^\frac12 }N + \frac{ N^\theta }{ ( q_2^\ast [q_1, q_2] )^{2\theta} } \right). \]
This eventually leads to the error term stated in \cref{401}

\subsection{The main term} \label{45}

It remains to evaluate the main term, which is formed by summing over all the terms~\eqref{4164}, and which takes the following form,
\[ M := \frac12 \sum_h \frac1h \int \! \sum_{ c = 1 }^\infty \frac{ \overline{\chi_1}(c) A_2(c) B_2(c) + \overline{\chi_2}(c) A_1(c) B_1(c) }{c^2} f( \xi, h ) e\left( \alpha \frac h\xi \right) \, \d\xi, \]
with
\begin{align*}
  A_i(c) &:= \frac1{ {q_i}^2 } \sum_\twoln{a \bmod c q_i}{ (a, c q_i) = 1 } \chi_i(a) e\left( \frac{ ha }{c q_i} \right) \overline{ G( \chi_i ) } \Pi_{\chi_1, \chi_2}( \log\xi; c q_i, a ), \\
  B_1(c) &:= \sum_n \frac{ \chi_1 \overline{\chi_2}(n) }n ( 1 + u_0(cn) ) \left( 1 - u_0\left( \frac{ \xi + h }{cn} \right) \right), \qquad B_2(c) := \overline{ B_1 (c) }.
\end{align*}

In the case~\( \chi_1 = \chi_2 \), the expression~\( A_i(c) \) simplifies to
\begin{align*}
  A_i(c) &= {q_1}^{-1} \chi_1(c) r_{c q_1}(h) \Delta_{z_1} \xi^{z_1} Z_{q_1}(2 z_1) c^{-2 z_1},
  \intertext{while \( B_i(c) \)~can be evaluated via a standard counter integration argument, leading to}
  B_i(c) &= \Delta_{z_2} ( \xi + h )^{z_2} Z_{q_1}(2 z_2) c^{-2 z_2} + \LO{ c^{1 - \eps} N^{-\frac12 + \eps} }.
\end{align*}
Put together this immediately leads to the expression stated in~\eqref{4065}.
The other case~\( \chi_1 \neq \chi_2 \) can be handled similarly.

\section{Proof of Theorems~\ref{101}--\ref{106}} \label{5}

In this section, we want to prove our main results, Theorems~\ref{101}--\ref{106}.
The general outline of the proof follows the approach described in~\cite[Chapter~4]{Ivi91}.

As before we assume \( \chi_1 \)~mod~\(q_1\) and \( \chi_2 \)~mod~\(q_2\) to be primitive Dirichlet characters.
Let
\[ q_1^\ast := \left( q_1, {q_2}^\infty \right), \quad q_2^\ast := \left( q_2, {q_1}^\infty \right) \quad \text{and} \quad q_0 := \sqrt{q_1 q_2}. \]
Instead of looking directly at \eqref{1019}~and~\eqref{1045}, it will be advantageous to look at their smooth analogues.
Hence, let~\( \delta > 0 \) be a fixed constant, let \( T_0 \)~and~\( \Omega \) be positive real numbers such that
\[ q_0 \max\{ q_1, q_2 \} \leq T_0^{1 - \delta} \qquad \text{and} \qquad {q_0}^\frac13 {T_0}^{-\frac13 + \delta} \leq \Omega \leq 1, \]
and let \( w : (0, \infty) \to [0, \infty) \) be a smooth weight function, which is compactly supported in
\[ \supp w \subset [ T_0 / 4, 2 T_0 ], \]
and whose derivatives satisfy the bounds
\begin{equation} \label{5005}
  w^{ (\nu) }(t) \ll (\Omega T_0)^{ -\nu } \quad \text{for} \quad \nu \geq 0 \qquad \text{and} \qquad \int \big| w^{ (\nu) }(t) \big| \, \d t \ll (\Omega T_0)^{1 - \nu} \quad \text{for} \quad \nu \geq 1.
\end{equation}
Our principal object of study will then be the smoothed moment
\[ I_{\chi_1, \chi_2}(w) := \int \left| L_{\chi_1, \chi_2}\left( \tfrac12 + \i \tfrac{2\pi t}{q_0} \right) \right|^2 w(t) \, \d t. \]
Compared with the original expressions~\eqref{1019} and~\eqref{1045}, we use a different normalization in~\(t\) here, as this will lead to simpler formulae during the proof.

Our aim is to prove the following asymptotic formula.

\begin{proposition} \label{501}
  Let~\( \delta, \eps > 0 \).
  Then
  \[ I_{\chi_1, \chi_2}(w) = \int \! P_{\chi_1, \chi_2}\left( \log\left( \tfrac{2\pi t}{q_0} \right) \right) w(t) \, \d t + \LO{ {T_0}^\eps E_{\chi_1, \chi_2}( T_0, \Omega) }, \]
  where \( P_{\chi_1, \chi_2} \) is a polynomial of degree at most~\(4\) whose coefficients depend only on \(\chi_1\)~and~\(\chi_2\), where \( E_{\chi_1, \chi_2}( T_0, \Omega) \) is the quantity defined as
  \begin{equation} \label{5057}
    E_{\chi_1, \chi_2}( T_0, \Omega) := \left( {q_0}^\frac12 + \frac1{ \Omega^\frac12 } \right) \frac{ ( q_1^\ast q_1 + q_2^\ast q_2 )^\frac12 }{ (q_1, q_2)^\frac12 } {q_0}^\frac32 {T_0}^\frac12 + \left( \frac{ q_1^\ast q_1 }{ {q_1^\ast}^{4\theta} } + \frac{ q_2^\ast q_2 }{ {q_2^\ast}^{4\theta} } \right)^\frac12 \frac{ {q_0}^{ 2 - 4\theta } }{ (q_1, q_2)^{\frac12 - 2\theta} } {T_0}^{\frac12 + \theta},
  \end{equation}
  and where the error depends only on~\(\delta\), \( \eps \) and the implicit constants in~\eqref{5005}.
\end{proposition}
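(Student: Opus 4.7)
The plan is to attack the smoothed moment~\( I_{\chi_1, \chi_2}(w) \) by writing
\[ \left| L_{\chi_1, \chi_2}\left( \tfrac12 + \i t' \right) \right|^2 = L_{\chi_1, \chi_2}\left( \tfrac12 + \i t' \right) L_{ \overline{\chi_1}, \overline{\chi_2} }\left( \tfrac12 - \i t' \right), \qquad t' := 2\pi t / q_0, \]
and applying the approximate functional equation of \cref{241} to both factors, with the balanced choice~\( x = y = t' \sqrt{q_1 q_2} / (2\pi) = t \).  This produces four types of products: a main\(\times\)main piece~\( S_{\mathrm{mm}}(t) \), a dual\(\times\)dual piece~\( S_{\mathrm{dd}}(t) \) carrying a factor~\( \alpha_{\chi_1, \chi_2}(\tfrac12 + \i t') \overline{ \alpha_{\chi_1, \chi_2}(\tfrac12 + \i t') } = 1 \), two mixed pieces~\( S_{\mathrm{md}}(t), S_{\mathrm{dm}}(t) \) with a single factor of~\( \alpha \), and remainder terms whose contribution is absorbed into the error by means of~\eqref{2440} and~\eqref{2416}.

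For the mixed pieces \( S_{\mathrm{md}}(t) \) and~\( S_{\mathrm{dm}}(t) \), the asymptotic~\eqref{2263} shows that after multiplication with the arithmetic part~\( (n_1 n_2)^{ \pm \i t'} \) the combined phase has derivative proportional to~\( q_0^{-1} \log( n_1 n_2 / t^2 ) \), which is~\( \gg 1 \) except on a thin range.  Together with the smoothness bounds~\eqref{5005} on~\(w\), repeated integration by parts kills these contributions up to an acceptable error.  The dual\(\times\)dual piece~\( S_{\mathrm{dd}} \) is, after the change of variables~\( t' \mapsto -t' \) in the Mellin-Barnes representation and use of~\eqref{2237}, of exactly the same shape as~\( S_{\mathrm{mm}} \) and can be folded into the latter; this is the point where \cref{242} is invoked to handle the mismatch in cutoffs.

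This reduces the problem to estimating
\[ \int S_{\mathrm{mm}}(t) w(t) \, \d t \;\asymp\; \sum_{n_1, n_2} \frac{ \tau_{\chi_1, \chi_2}(n_1) \tau_{ \overline{\chi_1}, \overline{\chi_2} }(n_2) }{ (n_1 n_2)^{1/2} } \int w(t) \left( \frac{n_2}{n_1} \right)^{ \i t' } V\!\left( \frac{n_1}t \right) V\!\left( \frac{n_2}t \right) \d t. \]
By~\eqref{5005} and integration by parts, only the terms with~\( |n_2 - n_1| \ll q_0 / \Omega \) survive up to a negligible error.  Splitting into diagonal~\( n_1 = n_2 \) and off-diagonal terms, the diagonal contribution gives
\[ \int w(t) \sum_n \frac{ |\tau_{\chi_1, \chi_2}(n)|^2 }{n} V\!\left( \frac nt \right)^2 \d t, \]
which by a standard Perron/contour computation using the known Dirichlet series of~\( |\tau_{\chi_1, \chi_2}|^2 \) (a product of four Riemann/Dirichlet factors after comparison at the prime level) yields a polynomial in~\( \log(2\pi t / q_0) \) of degree at most~\(4\), contributing the leading part of~\( P_{\chi_1, \chi_2} \).

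The remaining off-diagonal sum, after the substitution~\( h = n_2 - n_1 \) and a smooth dyadic decomposition of~\(h\) and~\(n_1\), reorganises into a family of sums of the exact form~\( D_{\chi_1, \chi_2}(f, \alpha) \) treated in \cref{401}, where the weight~\(f\) encodes the \(V\)-cutoffs together with the Fourier integral of~\(w\), where~\( \alpha \asymp 1 \) comes from the phase, and where~\( N \asymp T_0 \), \( H \ll q_0 / \Omega \).  Substituting the conclusion of \cref{401} produces a second polynomial (of degree at most~\(2\)), which is absorbed into~\( P_{\chi_1, \chi_2} \) after summing over~\(h\) and the residual variables; the resulting error terms, once multiplied by the various normalizing factors \( (q_1 q_2)^{\cdots} \) and summed over dyadic~\(H\)~and~\(N\), reproduce exactly the two summands of~\( E_{\chi_1, \chi_2}(T_0, \Omega) \) in~\eqref{5057}.

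The main obstacle is the last step: showing that the polynomial main term extracted from~\cref{401} combines with the diagonal contribution to give a \emph{bona fide} polynomial in~\( \log(2\pi t / q_0) \), rather than a more complicated object depending on~\(h\).  This hinges on carrying out the residue computations in~\eqref{4065} uniformly in~\(h\), using the arithmetic identities for~\( r_c(h) \)~and~\( G(\chi, h) \), and then re-summing in~\(h\) against the slowly varying part of~\(f\); the factorisation properties of~\( \psi_z(q) \) ensure that the outcome is again a linear combination of products~\( \Delta_{z_1} \Delta_{z_2} \xi^{z_1+z_2} Z_{q_1}(z_1) Z_{q_2}(z_2) \), which integrate to polynomials in~\( \log t \) of the advertised degree.
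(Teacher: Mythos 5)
Your overall architecture --- squaring the approximate functional equation, evaluating the diagonal by contour integration, and reorganising the off-diagonal into the shifted convolution sums of \cref{401} --- is the same as the paper's. The genuine gap is in your treatment of the pieces carrying a single factor of \( \alpha_{\chi_1, \chi_2} \). By~\eqref{2263} the relevant phase is \( F_1(t) = \tfrac t{q_0} \log( e^2 n_1 n_2 / t^2 ) \), so \( F_1'(t) = \tfrac1{q_0} \log( n_1 n_2 / t^2 ) \); this is never \( \gg 1 \) (it is \( \O( q_0^{-1} \log T_0 ) \) at best), and, decisively, it vanishes at \( t = \sqrt{n_1 n_2} \), which lies \emph{inside} the support of \( V(n_1/t) V(n_2/t) w(t) \) for the balanced choice \( x = y = t \) (take \( n_1 \asymp n_2 \asymp t \)). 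The ``thin range'' you propose to discard is thus a genuine stationary-phase range: estimated trivially its contribution is at least of order \( T_0 \) (already for \( q_0 = 1 \) this is the size of the main term), far exceeding \( {T_0}^\eps E_{\chi_1, \chi_2}(T_0, \Omega) \) when \( T_0 \) is large compared with \( q_0 \). The paper's remedy is exactly \cref{242}, applied to the \emph{mixed} piece: it rewrites the dual sum there as a main-type sum (no \( \alpha \)) plus an \( \alpha \)-sum whose second variable is restricted to \( n_2 \leq 2t/\rho \), which forces \( t^2 / (n_1 n_2) \geq 2 \), removes the stationary point from the support, and only then makes integration by parts succeed. You invoke \cref{242} for the wrong purpose: with \( x = y \) there is no ``mismatch in cutoffs'' between the main-by-main and dual-by-dual pieces --- they are exact complex conjugates and combine into \( 2 \Re(\cdot) \) with no further input.

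Two smaller inaccuracies: in the application of \cref{401} the parameter is \( \alpha = t/q_0 \asymp T_0 / q_0 \), not \( \asymp 1 \) (the phase is \( e( th / (q_0 n) ) \)); this is precisely what makes the hypothesis \( \alpha^{2/3} H \leq N^{1 - \delta} \) --- and hence the lower bound imposed on \( \Omega \) --- nontrivial, and it produces the term \( \alpha H^{1/2} / N \) in the error of \cref{401}, without which you cannot recover the first summand of~\eqref{5057}. Also \( N \) must range over the whole dyadic scale \( {T_0}^{1/2} \ll N \ll T_0 \), not just \( N \asymp T_0 \). These are repairable bookkeeping issues; the stationary-phase point above is a missing idea.
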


The polynomial~\( P_{\chi_1, \chi_2} \) which appears in the main term is the same polynomial as in Theorems~\ref{101}--\ref{106} (we set~\( P_\chi := P_{\chi, \chi} \)).
We will evaluate it explicitly at the end in \cref{55}.

Applying \cref{501} with~\( T_0 = (2\pi)^{-1} q_0 T \) and~\( \Omega = 1 \) immediately gives Theorems~\ref{102}, \ref{104} and~\ref{106}.
In order to prove the other results, we again set~\( T_0 = (2\pi)^{-1} q_0 T \), and then choose smooth and compactly supported weight functions~\( w^\pm : (0, \infty) \to [0, 1] \) of the following form,
\begin{gather*}
  w^-(t) = 1 \quad \text{for} \quad t \in [ (1 + \Omega) T_0 / 2, (1 - \Omega) T_0 ] \qquad \text{and} \qquad w^-(t) = 0 \quad \text{for} \quad t \not\in [T_0/2, T_0], \\
  w^+(t) = 1 \quad \text{for} \quad t \in [T_0/2, T_0] \qquad \text{and} \qquad w^+(t) = 0 \quad \text{for} \quad t \not\in [ (1 - \Omega) T_0 / 2, (1 + \Omega) T_0 ].
\end{gather*}
Then
\[ \frac{2\pi}{q_0} I_{\chi_1, \chi_2}( w^- ) \leq \int_{T/2}^T \left| L_{\chi_1, \chi_2}\left( \tfrac12 + \i t \right) \right|^2 \, \d t \leq \frac{2\pi}{q_0} I_{\chi_1, \chi_2}( w^+ ), \]
so that after applying \cref{501} on both sides, we arrive at the following asymptotic formula,
\[ \int_{T/2}^T \left| L_{\chi_1, \chi_2}\left( \tfrac12 + \i t \right) \right|^2 \, \d t = \int_{T/2}^T \! P_{\chi_1, \chi_2}(\log t) \, \d t + \LO{ \frac{ E_{\chi_1, \chi_2}( q_0 T, \Omega) }{ q_0 } + \Omega T }. \]
Now Theorems~\ref{101}, \ref{103} and~\ref{105} follow with the choice
\[ \Omega = (q_1, q_2)^{-\frac13} \left( q_1^\ast q_1 + q_2^\ast q_2 \right)^\frac13 (q_1 q_2)^\frac13 T^{-\frac13}. \]

\subsection{An approximative formula for~\texorpdfstring{\( | L_{\chi_1, \chi_2}(s) |^2 \)}{|L(s)|\^{}2}} \label{51}

As a first step towards the proof of \cref{501}, we will develop here an approximative formula for~\( | L_{\chi_1, \chi_2}(s) |^2 \) on the critical line.

In order to state the exact result, we first choose a smooth weight function~\( V : (0, \infty) \to [0, 1] \) which satisfies the conditions
\begin{equation} \label{5179}
  V(\xi) + V( \xi^{-1} ) = 1 \quad \text{for} \quad \xi > 0 \qquad \text{and} \qquad V(\xi) = 0 \quad \text{for} \quad \xi \geq 2.
\end{equation}

Then the formula reads as follows.

\begin{proposition} \label{511}
  Let~\( \delta, \eps > 0 \) and~\( \rho > 1 \).
  Then we have, for~\( t^{1 - \delta} \gg q_0 \max\{ q_1, q_2 \} \),
  \begin{equation} \label{5197}
    \left| L_{\chi_1, \chi_2}\left( \tfrac12 + \i \tfrac{2\pi t}{q_0} \right) \right|^2 = 2 \Re\big( \Sigma_{\chi_1, \chi_2}^{ (1) }(t) + \Sigma_{\chi_1, \chi_2}^{ (2) }(t) \big) + R_{\chi_1, \chi_2}(t),
  \end{equation}
  where \( \Sigma_{\chi_1, \chi_2}^{ (1) }(t) \)~and~\( \Sigma_{\chi_1, \chi_2}^{ (2) }(t) \) are given by
  \begin{align*}
    \Sigma_{\chi_1, \chi_2}^{ (1) }(t) &:= \sum_{n_1, n_2 = 1}^\infty \frac{ \tau_{\chi_1, \chi_2}(n_1) \tau_{ \overline{\chi_1}, \overline{\chi_2} }(n_2) }{ (n_1 n_2)^\frac12 } e\left( \frac t{q_0} \log\left( \frac{n_2}{n_1} \right) \right) W_{1, \rho}\left( \frac{n_1}t, \frac{ n_2 }t \right), \\
    \Sigma_{\chi_1, \chi_2}^{ (2) }(t) &:= \alpha_{\chi_1, \chi_2}\left( \tfrac12 + \i \tfrac{2\pi t}{q_0} \right) \sum_{n_1, n_2 = 1}^\infty \frac{ \tau_{ \overline{\chi_1}, \overline{\chi_2} }(n_2) \tau_{ \overline{\chi_1}, \overline{\chi_2} }(n_1) }{ (n_1 n_2)^\frac12 } e\left( \frac t{q_0} \log(n_1 n_2) \right) W_{2, \rho}\left( \frac{ n_1 }t, \frac{ n_2 }t \right),
  \end{align*}
  with
  \[ W_{1, \rho}(\xi_1, \xi_2) := V(\xi_1) \left( 1 - V\left( {\xi_2}^{-1} \right) V\left( \rho {\xi_2}^{-1} \right) \right) \qquad \text{and} \qquad W_{2, \rho}(\xi_1, \xi_2) := V(\xi_1) V(\xi_2) V( \rho \xi_2 ), \]
  and where \( R_{\chi_1, \chi_2}(t) \) is bounded by
  \[ R_{\chi_1, \chi_2}(t) \ll q_0 t^{ -\frac14 + \eps } \qquad \text{and} \qquad \int_{T_0 / 2}^{T_0} \left| R_{\chi_1, \chi_2}(t) \right| \, \d t \ll q_0 {T_0}^{\frac 38 + \eps} \quad \text{for} \quad {T_0}^{1 - \delta} \gg q_0 \max\{ q_1, q_2 \}. \]
  The implicit constants depend at most on~\(V\), \(\rho\), \(\delta\) and~\( \eps \).
\end{proposition}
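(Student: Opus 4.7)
The plan is to express \(|L_{\chi_1,\chi_2}(1/2 + 2\pi it/q_0)|^2 = L_{\chi_1,\chi_2}(s)\, L_{\overline{\chi_1},\overline{\chi_2}}(1-s)\) with \(s = 1/2 + 2\pi it/q_0\), apply \cref{241} and \cref{242} to each factor with parameters \(x = y = t\) (compatible with the condition \(4\pi^2 xy = q_1 q_2 (2\pi t/q_0)^2\)), multiply out the resulting representations, and match the product with \(2\Re(\Sigma^{(1)} + \Sigma^{(2)})\). From \cref{241} I would obtain \(L_{\chi_1,\chi_2}(s) = A + \alpha B + R^A\), where \(A = \sum_n \tau_{\chi_1,\chi_2}(n) n^{-s} V(n/t)\), \(B = \sum_n \tau_{\overline{\chi_1},\overline{\chi_2}}(n) n^{-(1-s)} V(n/t)\), and \(\alpha = \alpha_{\chi_1,\chi_2}(s)\). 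Applying \cref{242} with the same parameters then rewrites \(\alpha B = C + \alpha D + R^B\), where \(C\) carries the weight \(V(t/n)V(n/(\rho t))\) and \(D\) carries \(V(n/t)V(\rho n/t)\). A central observation is that \(\tilde A := A + C\) carries the weight \(V(n/t) + V(t/n)V(n/(\rho t)) = 1 - V(t/n)V(\rho t/n)\), which is precisely the \(n_2\)-factor of \(W_{1,\rho}\), while \(D\) matches the \(n_2\)-factor of \(W_{2,\rho}\). An analogous treatment of the conjugate factor yields \(\bar L = B + \bar\alpha A + \overline{R^A}\).

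Multiplying and using \(\alpha \bar\alpha = 1\) gives
\[ L \bar L = \tilde A B + \bar\alpha \tilde A A + \alpha D B + D A + (\text{products involving } R^A,\, R^B,\, \overline{R^A}). \]
Setting \(\Sigma^{(1)} = A(B + \overline C)\) and \(\Sigma^{(2)} = \alpha B D\) and computing
\[ 2\Re\bigl(\Sigma^{(1)} + \Sigma^{(2)}\bigr) = 2 A B + A \overline C + B C + \alpha B D + \bar\alpha A \tilde D \]
(with \(\tilde D = \overline D\)), the pivotal step is the \cref{242} identity rearranged as \(\bar\alpha C = B - D - \bar\alpha R^B\), together with its complex conjugate \(\overline C = \bar\alpha A - \bar\alpha \tilde D - \overline{R^B}\). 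Substituting these into \(L \bar L - 2\Re(\Sigma^{(1)} + \Sigma^{(2)})\) produces the algebraic cancellation \(\bar\alpha C A + D A - A B = (B - D) A + D A - A B = B A - A B = 0\), leaving only terms of type \(A \cdot \overline{R^B}\) or \(R^B \cdot A\); together with the errors inherited from the initial expansion, these form the remainder \(R_{\chi_1,\chi_2}(t)\).

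For the individual bound \(|R_{\chi_1,\chi_2}(t)| \ll q_0 t^{-1/4+\eps}\), I would combine the convexity estimate \(|L| \ll t^{3/8+\eps}\) from \cref{221} (where the \(q_0\) factors from convexity cancel those from the rescaling \(\tilde t = 2\pi t/q_0\)) with \(|R^A|, |R^B| \ll q_0 t^{-5/8+\eps}\) from \eqref{2440}, each error term being reducible to products of shape \(L \cdot R\). For the mean bound \(\int |R_{\chi_1,\chi_2}(t)| \, dt \ll q_0 T_0^{3/8+\eps}\), Cauchy-Schwarz gives \(\int |L \cdot R^A| \, dt \leq (\int |L|^2 \, dt)^{1/2} (\int |R^A|^2 \, dt)^{1/2}\); \cref{222} bounds the first factor by \(T_0^{1/2+\eps}\), and \(\int |R^A|^2 \leq \max|R^A| \cdot \int |R^A|\) combined with \eqref{2416} bounds the second by \(q_0 T_0^{-5/16+\eps}\). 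The main obstacle is the intricate algebraic bookkeeping in the second paragraph: tracking the precise weights of all the double sums and matching them with \(W_{1,\rho}\), \(W_{2,\rho}\) requires systematic use of the \cref{242} identity in conjunction with \(\alpha \bar\alpha = 1\) to effect the necessary cancellation.
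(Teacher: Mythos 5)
Your algebraic decomposition is correct and is essentially the paper's argument carried out in a slightly different order: the paper first opens the square via \cref{241} (using $|\alpha_{\chi_1,\chi_2}(\tfrac12+\i\tilde t)|=1$ and the fact that with $x=y=t$ on the critical line the conjugate of the first sum in \eqref{2426} is the second), obtaining $2\Re(\Sigma_1+\Sigma_2)$ plus remainders, and only afterwards applies \cref{242} to $\Sigma_2$; you apply \cref{242} to one factor of $L\overline L$ before multiplying. The weight identity $V(n/t)+V(t/n)V(n/(\rho t))=1-V(t/n)V(\rho t/n)$, which matches $A+C$ with the $\xi_2$-factor of $W_{1,\rho}$, is exactly the right observation, and your cancellation computation checks out. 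Your individual bound also matches the paper's: the partial sum $A$ (not $L$ itself --- a contour shift plus \cref{221} is needed here, which is what the paper does) is $\ll t^{3/8+\eps}$, while \eqref{2440} gives $R^A,R^B\ll q_0t^{-5/8+\eps}$; you should also dispose of the quadratic term $|R^A|^2\ll q_0^2t^{-5/4+\eps}$, which is admissible since $q_0\ll t^{1/2}$ under the hypothesis.

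The genuine gap is in the averaged bound. Your Cauchy--Schwarz step needs $\int_{T_0/2}^{T_0}|L_{\chi_1,\chi_2}|^2\,\d t\ll T_0^{1+\eps}$, but \cref{222} controls only the \emph{first} moment $\int|L_{\chi_1,\chi_2}|\,\d t$; the second moment of $L_{\chi_1,\chi_2}=L(\cdot,\chi_1)L(\cdot,\chi_2)$ is precisely the fourth-moment quantity the whole paper is devoted to, and no bound of strength $T_0^{1+\eps}$ uniform in the conductors follows from the quoted results (a mean-value theorem for the corresponding Dirichlet polynomial costs an extra $q_0^{1/2}$, which does not fit the target $q_0T_0^{3/8+\eps}$ throughout the admissible range $q_0\ll T_0^{1/2}$). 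The step as written would therefore fail, or be circular. The fix is to pair $L^\infty$ with $L^1$ rather than $L^2$ with $L^2$: bound the Dirichlet polynomial pointwise by $T_0^{3/8+\eps}$ via \cref{221}, and use \eqref{2416} (after the change of variable $\tilde t=2\pi t/q_0$, which contributes the factor $q_0$) to get $\int|R^A|\,\d t\ll q_0T_0^{\eps}$; the product is then $q_0T_0^{3/8+\eps}$, which is the paper's route.
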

\begin{proof}
  \cref{511} is essentially a direct consequence of the approximate functional equations stated in \cref{24}.
  
  We open the square and apply \cref{241} twice with~\( \sigma = 1/2 \) and~\( x = y = t  \).
  After taking account of~\eqref{2237}, this gives
  \[ \left| L_{\chi_1, \chi_2}\left( \tfrac12 + \i \tfrac{2\pi t}{q_0} \right) \right|^2 = 2\Re( \Sigma_1(t) + \Sigma_2(t) + R_1(t) + R_2(t) ) + R_3(t), \]
  with
  \begin{align*}
    \Sigma_1(t) &:= \sum_{n_1, n_2 = 1}^\infty \frac{ \tau_{\chi_1, \chi_2}(n_1) \tau_{ \overline{\chi_1}, \overline{\chi_2} }(n_2) }{ (n_1 n_2)^\frac12 } e\left( \frac t{q_0} \log\left( \frac{n_2}{n_1} \right) \right) V\left( \frac{ n_1 }t \right) V\left( \frac{n_2}t \right), \\
    \Sigma_2(t) &:= \overline{ \alpha_{\chi_1, \chi_2}\left( \tfrac12 + \i \tfrac{2\pi t}{q_0} \right) } \sum_{n_1, n_2 = 1}^\infty \frac{ \tau_{ \chi_1, \chi_2 }(n_1) \tau_{ \chi_1, \chi_2 }(n_2) }{ (n_1 n_2)^\frac12 } e\left( -\frac t{q_0} \log(n_1 n_2) \right) V\left( \frac{ n_1 }t \right) V\left( \frac{ n_2}t \right), \\
    \intertext{and}
    R_1(t) &:= \overline{ \alpha_{\chi_1, \chi_2}\left( \tfrac12 + \i \tfrac{2\pi t}{q_0} \right) } R_{\chi_1, \chi_2}\left( \tfrac12 + \i \tfrac{2\pi t}{q_0}; t, t \right) \sum_{n = 1}^\infty \frac{ \tau_{ \chi_1, \chi_2 }(n) }{ n^\frac12 } e\left( -\frac t{q_0} \log n \right) V\left( \frac nt \right), \\
    R_2(t) &:= \overline{ R_{\chi_1, \chi_2}\left( \tfrac12 + \i \tfrac{2\pi t}{q_0}; t, t \right) } \sum_{n = 1}^\infty \frac{ \tau_{ \chi_1, \chi_2 }(n) }{ n^\frac12 } e\left( -\frac t{q_0} \log n \right) V\left( \frac nt \right), \\
    R_3(t) &:= \left| R_{\chi_1, \chi_2}\left( \tfrac12 + \i \tfrac{2\pi t}{q_0}; t, t \right) \right|^2.
  \end{align*}
  
  Next, we use \cref{242} with~\( \sigma = 1/2 \) and~\( x = y = t \) to express~\( \Sigma_2(t) \) as
  \[ \Sigma_2(t) = \Sigma_2'(t) + \Sigma_2''(t) + R_4(t), \]
  with
  \begin{align*}
    \Sigma_2'(t) &:= \sum_{n_1, n_2 = 1}^\infty \frac{ \tau_{\chi_1, \chi_2}(n_1) \tau_{ \overline{\chi_1}, \overline{\chi_2} }(n_2) }{ (n_1 n_2)^\frac12 } e\left( \frac t{q_0} \log\left( \frac{n_2}{n_1} \right) \right) V\left( \frac{ n_1 }t \right) V\left( \frac t{n_2} \right) V\left( \frac{n_2}{\rho t} \right), \\
    \Sigma_2''(t) &:= \overline{ \alpha_{\chi_1, \chi_2}\left( \tfrac12 + \i \tfrac{2\pi t}{q_0} \right) } \sum_{n_1, n_2 = 1}^\infty \frac{ \tau_{ \chi_1, \chi_2 }(n_1) \tau_{ \chi_1, \chi_2 }(n_2) }{ (n_1 n_2)^\frac12 } e\left( \frac{-t}{q_0} \log(n_1 n_2) \right) V\left( \frac{ n_1 }t \right) V\left( \frac{ n_2 }t \right) V\left( \frac{ \rho n_2 }t \right), \\
    R_4(t) &:= \overline{ R_{\chi_1, \chi_2}'\left( \tfrac12 + \i \tfrac{2\pi t}{q_0}; t, t \right) } \sum_{n = 1}^\infty \frac{ \tau_{ \chi_1, \chi_2 }(n) }{ n^\frac12 } e\left( -\frac t{q_0} \log n \right) V\left( \frac nt \right).
  \end{align*}
  
  The terms \( \Sigma_1(t) \), \( \Sigma_2'(t) \) and~\( \Sigma_2''(t) \) together form the two main terms in~\eqref{5197}.
  Furthermore, it follows immediately from the bounds~\eqref{2440} and~\eqref{2416} that
  \[ R_3(t) \ll {q_0}^2 t^{ -\frac54 + \eps } \qquad \text{and} \qquad \int_{ T_0 / 2}^{T_0} \! \left| R_3(t) \right| \, \d t \ll {q_0}^2 {T_0}^{ -\frac58 + \eps } \quad \text{for} \quad {T_0}^{1 - \delta} \gg q_0 \max\{ q_1, q_2 \}. \]
  In order to estimate the other error terms we first note that
  \[ \sum_{n = 1}^\infty \frac{ \tau_{ \chi_1, \chi_2 }(n) }{ n^\frac12 } e\left( - \frac t{q_0} \log n \right) V\left( \frac nt \right) \ll t^{\frac 38 + \eps}, \]
  as can be shown by a standard counter integration argument using \cref{221}.
  Together with the bound~\eqref{2440}, we thus get, for~\( i = 1, 2, 4 \),
  \[ R_i(t) \ll q_0 t^{ -\frac14 + \eps } \qquad \text{and} \qquad \int_{ T_0 / 2}^{T_0} \left| R_i(t) \right| \, \d t \ll q_0 {T_0}^{\frac 38 + \eps} \quad \text{for} \quad {T_0}^{1 - \delta} \gg q_0 \max\{ q_1, q_2 \}. \]
  This finishes the proof of \cref{511}.
\end{proof}

\subsection{A preliminary formula for~\texorpdfstring{\( I_{\chi_1, \chi_2}(w) \)}{I(w)}} \label{52}

Next, we will use \cref{511} to prove a preliminary formula for~\( I_{\chi_1, \chi_2}(w) \) which reduces its estimation to the estimation of certain divisor sums.

Before stating the result, it is again necessairy to fix a smooth weight functions of a certain shape.
Let~\( U : \RR \to [0, \infty) \) be a smooth and compactly supported function such that
\[ U(\xi) = 1 \quad \text{for} \quad |\xi| \leq q_0 \Omega^{-1} {T_0}^{-7/8} \qquad \text{and} \qquad U(\xi) = 0 \quad \text{for} \quad |\xi| \geq 2 q_0 \Omega^{-1} {T_0}^{-7/8}, \]
and such that its derivatives satisfy
\begin{equation} \label{5210}
  U^{ (\nu) }(\xi) \ll |\xi|^{-\nu} \quad \text{for} \quad \nu \geq 0.
\end{equation}
Then we have the following formula for~\( I_{\chi_1, \chi_2}(w) \).

\begin{proposition} \label{521}
  Let~\( \delta, \eps > 0 \).
  Then we have
  \[ I_{\chi_1, \chi_2}(w) = 2 \Re\left( M_{\chi_1, \chi_2}^{ (1) }(w) + M_{\chi_1, \chi_2}^{ (2) }(w) \right) + \LO{ q_0 {T_0}^{\frac 38 + \eps} }, \]
  where
  \begin{align}
    M_{\chi_1, \chi_2}^{ (1) }(w) &:= \int \! \sum_{n = 1}^\infty \frac{ \left| \tau_{\chi_1, \chi_2}(n) \right|^2 }n V\left( \frac nt \right) w(t) \, \d t, \label{5205} \\
    M_{\chi_1, \chi_2}^{ (2) }(w) &:= \int \! \sum_\twoln{ n_1, n_2 \geq 1 }{ n_1 \neq n_2 } \frac{ \tau_{\chi_1, \chi_2}(n_1) \tau_{ \overline{\chi_1}, \overline{\chi_2} }(n_2) }{ (n_1 n_2)^\frac12 } e\left( \frac t{q_0} \log\left( \frac{n_2}{n_1} \right) \right) U\left( \frac{n_2}{n_1} - 1 \right) V\left( \frac{ n_1 }t \right) w(t) \, \d t. \label{5222}
  \end{align}
  The implicit constant depends at most on~\(V\), \(\delta\), \(\eps\) and the implicit constants in~\eqref{5005} and~\eqref{5210}.
\end{proposition}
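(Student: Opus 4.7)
The plan is to substitute \cref{511} (with a fixed $\rho \geq 8$) into $I_{\chi_1, \chi_2}(w)$ and integrate against $w$. The remainder $R_{\chi_1,\chi_2}(t)$ in~\eqref{5197} contributes at most $\LO{ q_0 T_0^{3/8 + \eps} }$ by applying the $L^1$-bound of \cref{511} on dyadic sub-intervals covering $\supp w \subset [T_0/4, 2 T_0]$. It remains to analyse the two oscillating sums $\int \Sigma_{\chi_1,\chi_2}^{(1)}(t) w(t) \, \d t$ and $\int \Sigma_{\chi_1,\chi_2}^{(2)}(t) w(t) \, \d t$.

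For the dual sum $\int \Sigma^{(2)} w$, I would use~\eqref{2263} to write $\alpha_{\chi_1,\chi_2}(1/2 + 2\pi \i t/q_0)$ as a pure oscillating factor times a slowly varying amplitude, so that the integrand takes the form $e(\phi(t)) g(t)$ with
\[
  \phi'(t) = \frac{1}{q_0} \log\!\left( \frac{ n_1 n_2 }{ t^2 } \right),
\]
and $g(t)$ having derivatives of order $(\Omega T_0)^{-\nu}$. On the support of $W_{2,\rho}$ the factor $V(\rho n_2/t)$ forces $n_2 < 2t/\rho$, so $n_1 n_2 / t^2 < 4/\rho \leq 1/2$ and $|\phi'(t)| \geq (\log 2)/q_0$. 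Repeated integration by parts in $t$ therefore gains a factor of $q_0 / (\Omega T_0) \ll T_0^{-1/3}$ per step, using $\Omega T_0 \geq q_0^{1/3} T_0^{2/3 + \delta}$ together with $q_0 \leq T_0^{(1-\delta)/2}$. Combined with the trivial bound $\LO{ T_0^{2+\eps} }$ on the double sum against $w$, a fixed number of iterations reduces this contribution to a negligible size.

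The core of the argument is the treatment of $\int \Sigma^{(1)} w$, which I would split according to whether $n_1 = n_2$ or not. On the diagonal the identity
\[
  V(n/t) - W_{1,\rho}(n/t, n/t) = V(n/t) V(t/n) V(\rho t/n)
\]
holds, and for $\rho \geq 8$ the right-hand side vanishes identically, since $V(\rho t/n) \neq 0$ requires $n > \rho t/2 \geq 4 t$ while $V(n/t) \neq 0$ requires $n < 2 t$; the diagonal therefore reproduces exactly $M_{\chi_1,\chi_2}^{(1)}(w)$. For the off-diagonal, I would insert the partition $1 = U(n_2/n_1 - 1) + (1 - U(n_2/n_1 - 1))$. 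On the support of $U$ the bound $|n_2/n_1 - 1| \leq 2 q_0 \Omega^{-1} T_0^{-7/8} \ll 1$ gives $n_2 < 2 n_1 \leq 4 t$, and the analogous identity forces $W_{1,\rho}(n_1/t, n_2/t) = V(n_1/t)$ on this support; this piece thus yields $M_{\chi_1,\chi_2}^{(2)}(w)$. On the complementary support one has $|\log(n_2/n_1)| \gtrsim q_0 \Omega^{-1} T_0^{-7/8}$, so the oscillation $e((t/q_0) \log(n_2/n_1))$ has $t$-derivative $\gtrsim \Omega^{-1} T_0^{-7/8}$, and each integration by parts in $t$ gains $\ll T_0^{-1/8}$, making this tail negligible after sufficiently many iterations. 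The main obstacle is the precise calibration of the threshold $q_0 \Omega^{-1} T_0^{-7/8}$ defining $U$: it must be small enough for the identity $W_{1,\rho}(n_1/t, n_2/t) = V(n_1/t)$ to hold on the $U$-support, yet large enough for the complementary integration by parts to yield a negative power of $T_0$, and both conditions are simultaneously met precisely by this choice.
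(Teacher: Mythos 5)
Your proposal is correct and follows essentially the same route as the paper: substitute \cref{511} with a fixed $\rho$, use the $L^1$-bound on $R_{\chi_1,\chi_2}$, kill the dual sum via~\eqref{2263} and the lower bound $|F_1'(t)|\gg 1/q_0$ coming from the support of $W_{2,\rho}$, and split $\Sigma^{(1)}$ into the diagonal, the $U$-localized off-diagonal (where $n_2\le 4t$ forces $W_{1,\rho}(n_1/t,n_2/t)=V(n_1/t)$), and the complementary range handled by integration by parts against the phase $e((t/q_0)\log(n_2/n_1))$. All the quantitative calibrations (the gain $q_0/(\Omega T_0)$ per step for $J_2$, the gain $T_0^{-1/8}$ per step for the tail, and the threshold $q_0\Omega^{-1}T_0^{-7/8}$ in the definition of $U$) match the paper's argument.
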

\begin{proof}
  We apply \cref{511} with~\( \rho = 8 \) on the integrand in~\( I_{\chi_1, \chi_2}(w) \) and then integrate over~\(t\).
  This leads to
  \[ I_{\chi_1, \chi_2}(w) = 2\Re\left( J_1 + J_2 \right) + \LO{ q_0 {T_0}^{\frac 38 + \eps} }, \]
  with
  \begin{align*}
    J_1 &:= \sum_{n_1, n_2 = 1}^\infty \frac{ \tau_{\chi_1, \chi_2}(n_1) \tau_{ \overline{\chi_1}, \overline{\chi_2} }(n_2) }{ (n_1 n_2)^\frac12 } \int \! W_{1, 8}\left( \frac{n_1}t, \frac{ n_2 }t \right) e\left( \frac t{q_0} \log\left( \frac{n_2}{n_1} \right) \right) w(t) \, \d t, \\
    J_2 &:= \sum_{n_1, n_2 = 1}^\infty \frac{ \tau_{ \overline{\chi_1}, \overline{\chi_2} }(n_2) \tau_{ \overline{\chi_1}, \overline{\chi_2} }(n_1) }{ (n_1 n_2)^\frac12 } \int \! \alpha_{\chi_1, \chi_2}\left( \tfrac12 + \i \tfrac{2\pi t}{q_0} \right) W_{2, 8}\left( \frac{ n_1 }t, \frac{ n_2 }t \right) e\left( \frac t{q_0} \log(n_1 n_2) \right) w(t) \, \d t.
  \end{align*}
  We split the sum~\( J_1 \) into three parts as follows,
  \[ J_1 = \sum_\twoln{n_1, n_2 \geq 1}{ n_1 = n_2 } (\ldots) + \sum_\twoln{n_1, n_2 \geq 1}{ n_1 \neq n_2 } U\left( \frac{n_2}{n_1} - 1 \right) (\ldots) + \sum_\twoln{n_1, n_2 \geq 1}{ n_1 \neq n_2 } \left( 1 - U\left( \frac{n_2}{n_1} - 1 \right) \right) (\ldots) =: J_{1 \text a} + J_{1 \text b} + J_{1 \text c}. \]
  As we will see, the contribution coming from the sums \( J_{1 \text c} \)~and~\( J_2 \) is neglible, while both \( J_{1 \text a} \)~and~\( J_{1 \text b} \) contribute to the main term.
  
  We start with~\( J_{1 \text c} \).
  In this sum we have, by definition of~\(U\),
  \[ \left| \log\left( \frac{n_2}{n_1} \right) \right| \gg \min\left\{ 1, \left| \frac{n_2}{n_1} - 1 \right| \right\} \gg \min\left\{ 1, \frac{q_0}{ \Omega {T_0}^\frac78 } \right\}, \]
  and by integrating by parts over~\(t\) repeatedly, we see that the integral in~\( J_{1 \text c} \) gets arbitrarily small.
  Hence the contribution of~\( J_{1 \text c} \) is indeed negligible.
  
  Next, we consider~\( J_2 \).
  Using the approximation~\eqref{2263}, we can write the integral in~\(J_2\) as
  \[ \int \! \alpha_{\chi_1, \chi_2}\left( \tfrac12 + \i \tfrac{2\pi t}{q_0} \right) W_{2, 8}\left( \frac{ n_1 }t, \frac{ n_2 }t \right) e\left( \frac t{q_0} \log(n_1 n_2) \right)  w(t) \, \d t = \int \! e( F_1(t) ) F_2(t) \, \d t, \]
  with
  \[ F_1(t) := \frac t{q_0} \log\left( \frac{ e^2 n_1 n_2 }{t^2} \right) \qquad \text{and} \qquad F_2(t) := \i \frac{ G(\chi_1) G(\chi_2) }{ (-1)^{ \kappa_1 + \kappa_2 } q_0 } A\left( \frac{2\pi t}{q_0} \right) W_{2, 8}\left( \frac{ n_1 }t, \frac{ n_2 }t \right) w(t). \]
  The function~\( W_{2, 8}(t) \) vanishes unless both the conditions
  \[ \frac t{ n_1 } \geq \frac12 \quad \text{and} \quad \frac t{ n_2 } \geq 4, \]
  are met, which means that
  \[ \frac{ t^2 }{ n_1 n_2 } \geq \max\bigg\{ \frac14 \frac{n_1}{n_2}, 16 \frac{n_2}{n_1} \bigg\} \geq 2. \]
  This leads to the following lower bound for~\( F_1'(t) \),
  \[ F_1'(t) = \frac1{q_0} \log\left( \frac{ n_1 n_2 }{ t^2 } \right) \gg \frac1{q_0}, \]
  and integrating by parts repeatedly shows that the integral gets arbitrarily small.
  We thus see that the contribution of~\( J_2 \) too is neglible.
  
  Finally we turn towards the two remaining terms~\( J_{1 \text a} \) and~\( J_{1 \text b} \).
  In both these terms, it is certainly true that~\( 2 n_1 \geq n_2 \), at least for \(T_0\) sufficiently large.
  Since the integrand vanishes unless~\( n_1 \leq 2t \), this implies that~\( n_2 \leq 4t \).
  By consequence, the weight function~\( W_{1, 8} \) simplifies to
  \[ W_{1, 8}\left( \frac{ n_1 }t, \frac{ n_2 }t \right) = V\left( \frac{ n_1 }t \right). \]
  This finishes the proof of \cref{521}.
\end{proof}

In order to prove \cref{501}, it thus remains to evaluate the two sums inside \eqref{5205}~and~\eqref{5222}.
The evaluation of the former is fairly easy and will be done in \cref{53}, where we will prove the following asymptotic formula.

\begin{proposition} \label{522}
  Let~\( \eps > 0 \).
  Then
  \begin{equation} \label{5223}
    M_{\chi_1, \chi_2}^{ (1) }(w) = \int \! P_{\chi_1, \chi_2}^{ (1) }(\log t) w(t) \, \d t + \LO{ {q_0}^\frac12 {T_0}^{\frac12 + \eps} },
  \end{equation}
  where \( P_{\chi_1, \chi_2}^{ (1) } \) is a polynomial of degree less or equal to~\(4\) whose coefficients depend only on~\( \chi_1 \), \( \chi_2 \) and~\( V \).
  The implicit constant depends at most on~\(V\), \(\eps\) and the implicit constants in~\eqref{5005}.
\end{proposition}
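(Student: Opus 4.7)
My plan is to proceed via Mellin inversion and contour shifting. Let $\tilde V(s) := \int_0^\infty V(\xi)\xi^{s-1}\,\d\xi$ be the Mellin transform of $V$, which is entire with rapid decay along vertical lines since $V$ is smooth and compactly supported in $(0,\infty)$, and let $\mathcal{W}(u) := \int w(t)t^{u-1}\,\d t$. Using $V(n/t) = (2\pi\i)^{-1}\int_{(c)}\tilde V(s)(t/n)^s\,\d s$ for any $c > 0$ and swapping summation with integration (legitimate for $c > 1$), I would write
\begin{equation*}
  M_{\chi_1,\chi_2}^{(1)}(w) = \frac{1}{2\pi\i}\int_{(2)} \tilde V(s)\, D_{\chi_1,\chi_2}(1+s)\, \mathcal{W}(s+1)\,\d s,
\end{equation*}
where $D_{\chi_1,\chi_2}(s) := \sum_{n\geq 1} |\tau_{\chi_1,\chi_2}(n)|^2 n^{-s}$.

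The key analytic input is the Rankin--Selberg factorization
\begin{equation*}
  D_{\chi_1,\chi_2}(s) = \frac{L(s,\chi_1\overline{\chi_1})\,L(s,\chi_1\overline{\chi_2})\,L(s,\chi_2\overline{\chi_1})\,L(s,\chi_2\overline{\chi_2})}{L(2s,\chi_1\overline{\chi_1}\chi_2\overline{\chi_2})},
\end{equation*}
which is verified prime by prime via the classical Ramanujan identity. Since $\chi_i\overline{\chi_i}$ is the principal character mod $q_i$, the outer factors are $\zeta(s)$ up to bounded local corrections, while $L(s,\chi_1\overline{\chi_2})$ and $L(s,\chi_2\overline{\chi_1})$ are entire unless $\chi_1 = \chi_2$. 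Hence $D_{\chi_1,\chi_2}(1+s)$ has a pole at $s=0$ of order $4$ when $\chi_1=\chi_2$ and of order $2$ otherwise, while the denominator $L(2+2s,\cdot)$ stays holomorphic and non-vanishing in a neighborhood of $s=0$. Shifting the contour from $\Re(s)=2$ to $\Re(s)=-1/2+\eps$ crosses only this pole; its residue, obtained by expanding $t^s \mathcal{W}(s+1)$ as a power series in $s$, is a linear combination of $\int w(t)(\log t)^k\,\d t$ with $k \leq 4$ and so defines the polynomial $P_{\chi_1,\chi_2}^{(1)}$.

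For the shifted integral, the denominator $L(2+2\eps+2\i\tau,\cdot)^{-1}$ is uniformly bounded on $\Re(s)=-1/2+\eps$, the $\zeta$-factors grow only polynomially in $|\tau|$, and the mixed factors $L(1/2+\eps+\i\tau,\chi_1\overline{\chi_2})$ are controlled by the convexity bound of \cref{221}. Combined with the rapid decay of $\tilde V$ and the bound $|\mathcal{W}(s+1)| \ll T_0^{1/2+\eps}$ following from \eqref{5005}, the remainder is bounded by the claimed error. The delicate step is extracting the precise factor $q_0^{1/2} = (q_1 q_2)^{1/4}$ rather than the $[q_1,q_2]^{1/2+\eps}$ produced by applying convexity naively to each of the two mixed $L$-factors; this gap is expected to be closed by exploiting that the primitive conductor of $\chi_1\overline{\chi_2}$ is typically smaller than $[q_1,q_2]$, or by using a first-moment average in $\Im s$ via \cref{222}. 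Every other step reduces to standard residue calculus and routine Mellin bookkeeping.
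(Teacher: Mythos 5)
Your architecture --- Mellin inversion, a Rankin--Selberg factorization of \( \sum_n |\tau_{\chi_1,\chi_2}(n)|^2 n^{-s} \) into a quotient of \(L\)-functions with local corrections at ramified primes, a contour shift to \( \Re(s) = -1/2+\eps \), and the residue at \( s=0 \) producing the polynomial --- is exactly the route the paper takes (the paper writes the local corrections explicitly as \( \psi_z(q_1)\psi_z(q_2)/\psi_{1+2z}(q_1q_2) \) rather than hiding them in imprimitive \(L\)-functions, which is cosmetic). Two points, however, need repair.

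First, \( \tilde V \) is \emph{not} entire. The conditions \( V(\xi)+V(\xi^{-1})=1 \) and \( V(\xi)=0 \) for \( \xi\ge2 \) force \( V\equiv1 \) on \( (0,1/2] \), so \( V \) is not compactly supported in \( (0,\infty) \), and \( \tilde V(s)=\int_0^\infty V(\xi)\xi^{s-1}\,\d\xi \) has a simple pole at \( s=0 \) with residue \(1\); integrating by parts gives \( \tilde V(s) = -s^{-1}\int V'(\xi)\xi^{s}\,\d\xi \) with \( V' \) compactly supported in \( [1/2,2] \), which is how one gets both the meromorphic continuation and the rapid decay on vertical lines that you need. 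This extra factor \( s^{-1} \) is not a technicality: in the case \( \chi_1=\chi_2 \) it raises the pole at \( s=0 \) from order \(4\) to order \(5\), and it is precisely what makes the residue a degree-\(4\) (rather than degree-\(3\)) polynomial in \( \log t \). Your conclusion ``\( k\le4 \)'' is correct but does not follow from the pole count you give; taking \( \tilde V \) entire at face value would lose the leading term of the entire fourth moment.

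Second, the last paragraph of your argument is an acknowledged gap, and neither escape route closes it: the conductor of \( \chi_1\overline{\chi_2} \) genuinely can equal \( q_1q_2 \), and no pointwise convexity or Burgess-type bound produces a total of \( (q_1q_2)^{1/4}=q_0^{1/2} \) from the two mixed factors (\cref{221} gives \( (q_1q_2)^{3/8} \) in total at \( \sigma=1/2 \)). The paper does not attempt this either: it simply quotes the bound \( T_{\chi_1,\chi_2}(z)\ll q_0^{1-\Re(z)+\eps}(1+|\Im z|)^{(1-\Re(z))/2+\eps} \) on the shifted line, which yields an error of the shape \( q_0^{O(1)}T_0^{1/2+\eps} \) (at \( \Re(z)=-1/2+\eps \) this is \( q_0^{3/2}T_0^{1/2+\eps} \), not \( q_0^{1/2}T_0^{1/2+\eps} \)). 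The correct takeaway is that the precise power of \( q_0 \) here is irrelevant: the error term \( E_{\chi_1,\chi_2}(T_0,\Omega) \) in \cref{501} already contains a term of size at least \( q_0^{2}T_0^{1/2} \), so any bound polynomial in \( q_0 \) times \( T_0^{1/2+\eps} \) is absorbed. You should record an honest power of \( q_0 \) from convexity and move on rather than chase \( q_0^{1/2} \).
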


The evaluation of the other sum is far more difficult, and it is here that the shifted convolution problem considered in \cref{4} comes up.
The final result, proven in \cref{54}, is as follows.

\begin{proposition} \label{523}
  Let~\( \eps > 0 \).
  Then
  \begin{equation} \label{5216}
    M_{\chi_1, \chi_2}^{ (2) }(w) = \int \! P_{\chi_1, \chi_2}^{ (2) }(\log t) w(t) \, \d t + \LO{ {T_0}^\eps E_{\chi_1, \chi_2}( T_0, \Omega) },
  \end{equation}
  where \( P_{\chi_1, \chi_2}^{ (2) } \) is a polynomial of degree less or equal to~\(2\) whose coefficients depend only on~\( \chi_1 \), \( \chi_2 \) and~\( V \), and where \( E_{\chi_1, \chi_2}( T_0, \Omega) \) is the quantity defined in~\eqref{5057}.
  The implicit constant depends at most on~\(V\), \(\eps\) and the implicit constants in~\eqref{5005} and~\eqref{5210}.
\end{proposition}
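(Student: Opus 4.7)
The strategy is to bring $M_{\chi_1, \chi_2}^{(2)}(w)$ into the form of the shifted convolution sum treated by \cref{401} and apply that proposition dyadically in the shift parameter.

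First, set $n = \min(n_1, n_2)$ and $h = |n_2 - n_1|$. Swapping $(n_1, n_2)$ together with complex conjugating $(\chi_1, \chi_2)$ interchanges the summand for $n_2 > n_1$ with the complex conjugate of the summand for $n_1 > n_2$, so
\[ M_{\chi_1, \chi_2}^{(2)}(w) = 2\Re \int w(t) \sum_{h = 1}^\infty \sum_n \frac{\tau_{\chi_1, \chi_2}(n)\, \tau_{\overline{\chi_1}, \overline{\chi_2}}(n+h)}{\sqrt{n(n+h)}}\, e\left(\tfrac{t}{q_0} \log\left(1 + \tfrac{h}{n}\right)\right) U\left(\tfrac{h}{n}\right) V\left(\tfrac{n}{t}\right) \d t. \]
I would then introduce a smooth dyadic decomposition, localizing $n \asymp N \asymp T_0$ (forced by $V$) and $h \asymp H$. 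The support of $U$ gives $h \ll q_0 \Omega^{-1} T_0^{1/8}$, but a sharper truncation is available: repeated integration by parts in $t$ against the oscillation $e(t \log(1 + h/n)/q_0)$ -- whose frequency is $\asymp H/(q_0 T_0)$, while the amplitude has frequency scale $(\Omega T_0)^{-1}$ from $|w^{(\nu)}| \ll (\Omega T_0)^{-\nu}$ -- makes negligible any piece with $H \gg q_0 T_0^\eps/\Omega$. Only $O(T_0^\eps)$ dyadic scales $H \leq H_{\max} := q_0 T_0^\eps/\Omega$ thus contribute.

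For each such piece, factorize the phase as
\[ e\left(\tfrac{t}{q_0}\log(1 + h/n)\right) = e\left(\alpha \tfrac{h}{n}\right) e\left(\tfrac{t}{q_0} \psi(h/n)\right), \qquad \alpha := \tfrac{t}{q_0}, \quad \psi(x) := \log(1 + x) - x, \]
and absorb the correction $e(t\psi(h/n)/q_0)$, the algebraic factor $h/\sqrt{n(n+h)}$, the cutoffs $V, U$, and the dyadic bumps into a smooth weight $f_{t, H}(\xi, \eta)$ compactly supported in $[N/4, 2N] \times [H/4, 2H]$ and satisfying~\eqref{4096} up to an overall factor of $H/N$. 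After the renormalization $\tilde f_{t, H} := (N/H) f_{t, H}$, the dyadic piece equals $(H/N) \int w(t) D_{\chi_1, \chi_2}(\tilde f_{t, H}, t/q_0)\, \d t$ with $\tilde f_{t, H}$ meeting the hypotheses of \cref{401}. The compatibility condition $\alpha^{2/3} H \leq N^{1 - \delta}$ holds thanks to $\Omega \geq q_0^{1/3} T_0^{-1/3 + \delta}$, and the phase correction is smooth because $|t \psi(h/n)|/q_0 \ll H^2/(q_0 T_0) \ll 1$ for $H \leq H_{\max}$. Applying \cref{401} to each dyadic piece and integrating over $t$ produces a main term and an error. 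The main-term contributions, assembled via the explicit formula for $Q_{\chi_1, \chi_2}$ in~\eqref{4065} and its analogues for $\chi_1 \neq \chi_2$, collapse after the $\xi$- and $t$-integrations into $\int P_{\chi_1, \chi_2}^{(2)}(\log t)\, w(t)\, \d t$ with $P_{\chi_1, \chi_2}^{(2)}$ of degree at most $2$ (matching the total logarithmic degree of $Q_{\chi_1, \chi_2}$). For the error, each dyadic scale contributes $(H/N) \cdot T_0 \cdot E_{401}(H)$; substituting $[q_1, q_2]^{1/2} = q_0/(q_1, q_2)^{1/2}$ and $[q_1, q_2]^{1/2 - 2\theta} = q_0^{1 - 4\theta}/(q_1, q_2)^{1/2 - 2\theta}$, summing the geometric series in $H \leq H_{\max}$, and recombining the $1$- and $\alpha H^{1/2}/N$-contributions inside the first part of $E_{401}$ gives the bound $E_{\chi_1, \chi_2}(T_0, \Omega)$ of~\eqref{5057}.

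The main obstacle is the explicit evaluation of the main term. The polynomial $Q_{\chi_1, \chi_2}(\log\xi, \log(\xi+h); h)$ packages intricate arithmetic sums in $h$ (involving the Ramanujan sums $r_c(h)$, $r_{q_i}(h)$ and, in the case $\chi_1 \neq \chi_2$, twisted Gauss sums such as $G(\overline{\chi_1}\chi_2, h)$). Combining this with the oscillatory $\xi$-integration and then the $t$-integration, and verifying that the net result collapses to a polynomial in $\log t$ of degree at most $2$, is a delicate Mellin-theoretic computation to be carried out in \cref{55}, for which the paper credits the identities of Conrey~\cite{Con96}. A secondary care point is to verify that the $T_0^\eps$ losses from summing over the $O(\log T_0)$ dyadic scales, together with the truncation near $H = H_{\max}$, leave no uncontrolled boundary terms within the claimed error.
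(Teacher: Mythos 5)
Your overall strategy---dyadic decomposition in the shift $h=n_2-n_1$, truncation of $H$ by repeated integration by parts in $t$, and reduction to \cref{401} with $\alpha=t/q_0$---is exactly the paper's. However, there is a genuine gap in the error accounting. You apply \cref{401} pointwise in $t$ with a weight $\tilde f_{t,H}$ normalized to satisfy~\eqref{4096} with $O(1)$ constants, and then integrate the resulting error over $t$; since all you can use at that point is $\int |w(t)|\,\d t \ll T_0$, each dyadic block contributes $\ll (H/N)\,T_0\,\mc E$, where $\mc E$ denotes the error term of \cref{401}. At the top scale $H\asymp q_0N/(\Omega T_0)$ this is $\asymp (q_0/\Omega)\,\mc E$, whereas the target~\eqref{5057} corresponds to $q_0\,\mc E$ per block (indeed $q_0\cdot(q_1^\ast q_1+q_2^\ast q_2)^{1/2}[q_1,q_2]^{1/2}T_0^{1/2}(1+(q_0\Omega)^{-1/2})$ equals the first term of~\eqref{5057} upon writing $[q_1,q_2]^{1/2}=q_0/(q_1,q_2)^{1/2}$, and similarly for the $\theta$-term). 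So your bound overshoots by a factor $\Omega^{-1}$; since in the deduction of Theorems~\ref{101}, \ref{103} and~\ref{105} one takes $\Omega^{-1}$ to be a positive power of $T$, this loss is fatal. The missing device is one further, non-oscillatory integration by parts in $t$ \emph{before} invoking \cref{401}: writing $\int w(t)\,e(th/(q_0n))(\cdots)\,\d t=-\int \tfrac{q_0 n}{2\pi\i h}\,e(th/(q_0n))\,\tfrac{\d}{\d t}\left[ w(t)(\cdots) \right]\d t$ trades $\int|w|\ll T_0$ for $\int|w'|\ll1$ (by~\eqref{5005}), at the cost of a factor $q_0\xi$ absorbed into the weight---so the implicit constant in~\eqref{4096} becomes $\asymp q_0$ rather than $H/N$, while the factor $1/h$ feeds the $\sum_h h^{-1}$ structure of $D_{\chi_1,\chi_2}$. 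This gives $q_0\,\mc E$ per block, as required; it also explains why the paper replaces the correction $e(t\psi(h/n)/q_0)$ by the polynomial $g(t/q_0,h/n)$ rather than hiding it in the weight.

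Three secondary points. First, $V(n_1/t)$ only forces $n_1\ll t$, not $n_1\asymp T_0$; the range $N\ll T_0^{1/2}$, where condition~\eqref{4021} may fail, must be disposed of separately (trivially, or by inclusion in the error term), as the paper does. Second, your symmetrization $M^{(2)}_{\chi_1,\chi_2}(w)=2\Re(\cdots)$ is not an exact identity, because the weights $U(n_2/n_1-1)$ and $V(n_1/t)$ are not symmetric in $(n_1,n_2)$; it is cleaner to keep both signs of $h$, as \cref{401} permits. Third, the extraction of $P^{(2)}_{\chi_1,\chi_2}$ from the main term of \cref{401} (reversing the integration by parts, showing the main term is negligible unless $H\ll q_0N^{1+\eps}T_0^{-1}$, and completing the $h$-sum and $\xi$-integral to reach~\eqref{5446}) belongs to the proof of this proposition and is only gestured at in your sketch, although you are right that the final residue computation is deferred to \cref{55}.
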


These two results, applied on the preliminary asymptotic estimate stated in \cref{521}, eventually give \cref{501}.
The polynomials appearing in \eqref{5223}~and~\eqref{5216} both depend on the specific choice of the weight function~\(V\).
However, as one would expect, all the terms containing~\(V\) cancel out at the end, and the polynomial~\( P_{\chi_1, \chi_2} \) appearing in the main term in \cref{501} is of course independent of~\(V\).
We will show this also explicitly in \cref{55}, where we will evaluate~\( P_{\chi_1, \chi_2} \) and express it as a residue.

\subsection{Evaluation of~\texorpdfstring{\( M_{\chi_1, \chi_2}^{ (1) }(w) \)}{M\^{}(1)(w)}} \label{53}

In order to prove \cref{522}, we only need to evaluate the sum over~\(n\) in~\eqref{5205}, which we can do by a standard contour integration argument.

An elementary calculation shows that, for~\( \Re(z) > 0 \), 
\[ T_{\chi_1, \chi_2}(z) := \sum_{n = 1}^\infty \frac{ \left| \tau_{\chi_1, \chi_2}(n) \right|^2 }{ n^{1 + z} } = \frac{ \psi_z(q_1) \psi_z(q_2) \zeta(1 + z)^2 L( 1 + z, \overline{\chi_1} \chi_2 ) L( 1 + z, \chi_1 \overline{\chi_2} ) }{ \psi_{1 + 2z}(q_1 q_2) \zeta(2 + 2z) }, \]
with~\( \psi_z(q) \) as defined in~\eqref{4086}.
By Mellin inversion we thus have
\[ \sum_{n = 1}^\infty \frac{ \left| \tau_{\chi_1, \chi_2}(n) \right|^2 }n V\left( \frac nt \right) = \frac1{2\pi\i} \int_{ (2) } \! \hat V(z) T_{\chi_1, \chi_2}(z) t^z \, \d z. \]
After moving the line of integration to~\( \Re(z) = -1/2 + \eps \) and using the following bound, valid in the critical strip,
\[ T_{\chi_1, \chi_2}(z) \ll {q_0}^{ 1 - \Re(z) + \eps } ( 1 + | \Im z | )^{ \frac{ 1 - \Re(z) }2 + \eps}, \]
we get
\[ \sum_{n = 1}^\infty \frac{ \left| \tau_{\chi_1, \chi_2}(n) \right|^2 }n V\left( \frac nt \right) = P_{\chi_1, \chi_2}^{ (1) }(\log t) + \LO{ {q_0}^\frac12 t^{-\frac12 + \eps} }, \]
where \( P_{\chi_1, \chi_2}^{ (1) } \) is the polynomial defined by
\begin{equation} \label{5311}
  P_{\chi_1, \chi_2}^{ (1) }(\log t) := \Res{z = 0} \! \left( \hat V(z) T_{\chi_1, \chi_2}(z) t^z \right).
\end{equation}
This proves \cref{522}.

\subsection{Evaluation of~\texorpdfstring{\( M_{\chi_1, \chi_2}^{ (2) }(w) \)}{M\^{}(2)(w)}} \label{54}

We start by introducing a new variable~\( h := n_2 - n_1 \) and splitting the ranges of~\(h\) and~\( n_1 \) into dyadic intervals via the dyadic partition of unity defined in~\eqref{4195}.
This way \( M_{\chi_1, \chi_2}^{ (2) }(w) \) is split up into sums of the form
\begin{align*}
  D^\pm(N, H) &:= \sum_{ h, n } \tau_{\chi_1, \chi_2}(n) \tau_{ \overline{\chi_1}, \overline{\chi_2} }(n + h) \int \! f^\pm(n, h; t) e\left( \frac t{q_0} \log\left( 1 + \frac hn \right) \right) w(t) \, \d t,
  \intertext{with}
  f^\pm(\xi, \eta; t) &:= \xi^{-\frac12} (\xi + \eta)^{-\frac12} u\left( \frac\xi N \right) u\left( \pm\frac\eta H \right) U\left( \frac\eta\xi \right) V\left( \frac\xi t \right).
\end{align*}
Integrating by parts over~\(t\) repeatedly shows that \( D^\pm(N, H) \) becomes negligibly small unless
\[ H \ll \frac{ q_0 N }{ \Omega {T_0}^{1 - \eps} }. \]
Similarly, we can assume that~\( {T_0}^\frac12 \ll N \ll T_0 \), since otherwise \( D^\pm(N, H) \) is either empty or can be included in the error term in~\eqref{5216}.

Next, we write the oscillating factor in the integral over~\(t\) as
\[ e\left( \frac t{q_0} \log\left( 1 + \frac hn \right) \right) = e\left( \frac{th}{q_0 n} \right) g\left( \frac t{q_0}, \frac hn \right) + \LO{ {T_0}^{-\frac53 + \eps} }, \]
with
\[ g(\zeta_1, \zeta_2) := \sum_{\ell = 0}^{10} \frac{ (-2\pi \i \zeta_1)^\ell }{\ell !} \left( \sum_{k = 0}^5 \frac{ (-\zeta_2)^{k + 2} }{ k + 2 } \right)^\ell, \]
and then integrate by parts over~\(t\), so that
\[ D^\pm(N, H) = \int \! D_{1, t}^\pm(N, H) w'(t) \, \d t + \int \! D_{2, t}^\pm(N, H) \frac{ w(t) }t \, \d t + \LO{ 1 }, \]
where \( D_{i, t}^\pm(N, H) \) is given by
\[ D_{i, t}^\pm(N, H) := \sum_h \frac1h \sum_n \tau_{\chi_1, \chi_2}(n) \tau_{ \overline{\chi_1}, \overline{\chi_2} }(n + h) f_{i, t}^\pm( n, h ) e\left( \frac{th}{q_0 n} \right), \]
with
\[ f_{1, t}^\pm(\xi, \eta) := - \frac{q_0 \xi}{2\pi\i} f^\pm(\xi, \eta; t) g\left( \frac t{q_0}, \frac\xi\eta \right) \qquad \text{and} \qquad f_{2, t}^\pm(\xi, \eta) := t \frac\partial{\partial t} f_{1, t}^\pm(\xi, \eta). \]

Here we use \cref{401} with~\( \alpha = t / q_0 \) to evaluate the two sums~\( D_{1, t}^\pm(N, H) \) and~\( D_{2, t}^\pm(N, H) \).
After reversing the integration by parts in the appearing main term, we get
\[ D^\pm(N, H) = \int \! M_t^\pm(N, H) w(t) \, \d t + \LO{ {T_0}^\eps E_{\chi_1, \chi_2}( T_0, \Omega) }, \]
where \( E_{\chi_1, \chi_2}( T_0, \Omega) \) is as defined in~\eqref{5057}, and where
\[ M_t^\pm(N, H) := \sum_h \frac1h \int \! Q_{\chi_1, \chi_2}( \log\xi, \log(\xi + h); h ) f^\pm(\xi, h; t) g\left( \frac t{q_0}, \frac\xi\eta \right) e\left( \frac{th}{q_0 \xi} \right) \, \d\xi. \]
Integration by parts over~\(\xi\) shows that~\( M_t^\pm(N, H) \) becomes negligibly small if~\( H \gg q_0 N^{1 + \eps} {T_0}^{-1} \), while for~\( H \ll q_0 N^{1 + \eps} {T_0}^{-1} \) it simplifies to
\[ M_t^\pm(N, H) = \frac{q_0}{2\i} \sum_h \frac{ u\left( \frac{\pm  h}H \right) }{\pi h} \! \int \! \frac\partial{\partial\xi} \left( \! Q_{\chi_1, \chi_2}( \log(t\xi), \log(t\xi); h ) \xi u\left( \frac{t \xi}N \right) V(\xi) \! \right) e\left( \frac h{q_0 \xi} \right) \, \d\xi + \LO{ \frac{ {T_0}^\eps }{ {T_0}^\frac12 } }. \]
Finally, we sum over all~\( H \ll q_0 N^{1 + \eps} {T_0}^{-1} \) and~\( {T_0}^\frac12 \ll N \ll T_0 \), and then complete the sum over~\(h\) and the integral over~\(\xi\) trivially.
This gives
\[ M_{\chi_1, \chi_2}^{ (2) }(w) = \int \! P_{\chi_1, \chi_2}^{ (2) }(\log t) w(t) \, \d t + \LO{ {T_0}^\eps E_{\chi_1, \chi_2}(T_0, \Omega) }, \]
with
\begin{equation} \label{5446}
  P_{\chi_1, \chi_2}^{ (2) }(\log t) := \frac{q_0}{2\i} \sum_{ h \in \ZZ \setminus \{0\} } \frac1{\pi h} \int \! \frac\partial{\partial\xi} \left( Q_{\chi_1, \chi_2}( \log(t \xi), \log(t \xi); h ) \xi V(\xi) \right) e\left( \frac h{q_0 \xi} \right) \, \d\xi,
\end{equation}
which is what we wanted to show.

\subsection{The main term} \label{55}

Here we want to evaluate the polynomial~\( P_{\chi_1, \chi_2} \) which appears in \cref{501} and which is given by
\[ P_{\chi_1, \chi_2}(\log t) = 2 \Re\left( P_{\chi_1, \chi_2}^{ (1) }\left( \log \frac{q_0 t}{2\pi} \right) + P_{\chi_1, \chi_2}^{ (2) }\left( \log \frac{q_0 t}{2\pi} \right) \right), \]
where \( P_{\chi_1, \chi_2}^{ (1) } \)~and~\( P_{\chi_1, \chi_2}^{ (2) } \) are the polynomials coming up in Propositions~\ref{522} and~\ref{523}.
Our treatment follows closely the path set out by Conrey~\cite{Con96}.

We will focus on the case~\( \chi_1 = \chi_2 \).
Since the Laurent series expansion of~\( \hat V(z) \) around~\( z = 0 \) is given by
\[ \hat V(z) = \frac1z - \sum_{\ell = 0}^\infty \frac{ z^{2\ell + 1} }{ (2\ell + 2)! } \int_0^\infty \! V'(\xi) (\log\xi)^{2\ell + 2} \, \d\xi, \]
we immediately see by~\eqref{5311} that
\begin{multline*}
  P_{\chi_1, \chi_1}^{ (1) }(\log t) = \Res{z = 0}\!\left( \frac{ Z_{q_1}(z)^4 }{ \psi_{1 + 2z}(q_1) \zeta(2 + 2z) } \frac{ t^z }{ z^5 } \right) \\
    - \frac12 \, \Res{z = 0}\!\left( \frac{ Z_{q_1}(z)^4 }{ \psi_{1 + 2z}(q_1) \zeta(2 + 2z) } \frac{ t^z }{z^3} \right) \int_0^\infty \! V'(\xi) (\log\xi)^2 \, \d\xi - \frac1{24} \frac{ \psi_0(q_1)^4 }{ \psi_1(q_1) \zeta(2) } \int \! V'(\xi) (\log\xi)^4 \, \d\xi,
\end{multline*}
with \( Z_q(z) \) and~\( \psi_z(q) \) as defined in~\eqref{4086}.

The evaluation of the other polynomial~\( P_{\chi_1, \chi_1}^{ (2) } \) proves more difficult.
By~\eqref{5446} we can write it as
\[ P_{\chi_1, \chi_1}^{ (2) }(\log t) = \Delta_{z_1} \Delta_{z_2} \psi_0(q_1) Z_{q_1}(2 z_1) Z_{q_1}(2 z_2) Z_{q_1}(2 z_1 + 2 z_2) t^{z_1 + z_2} \frac{ A(z_1 + z_2) }{ B(z_1 + z_2) }, \]
with
\begin{align*}
  A(z) &:= \psi_{1 + 2z}(q_1) \zeta(2 + 2z) \sum_{h = 1}^\infty \frac{ r_{q_1}(h) }{\pi h} \sum_\twoln{ c = 1 }{ (c, q_1) = 1 }^\infty \frac{ r_c(h) }{ c^{2 + 2z} } \int_0^\infty \! \frac\partial{\partial \xi} \left( V(\xi) \xi^{1 + z} \right) \sin\left( 2\pi \frac h{ \xi q_1 } \right) \, \d\xi, \\
  B(z) &:= \psi_{1 + 2z}(q_1) \zeta(2 + 2z) \psi_0(q_1) Z_{q_1}(2z).
\end{align*}
Note that the expression~\( A(z) \) converges in a neighbourhood of~\( z = 0 \), and thus defines a holomorphic function in this region.
A simple calculation then shows that
\[ P_{\chi_1, \chi_1}^{ (2) }(\log t) = \frac{\partial^2}{\partial z^2} \left( Z_{q_1}(z)^4 t^z \frac{ A(z) }{ B(z) } \right) \bigg|_{z = 0} = 2 \, \Res{z = 0}\!\left( Z_{q_1}(z)^4 \frac{ A(z) }{ B(z) } \frac{ t^z }{z^3} \right). \]
In order to evaluate~\( P_{\chi_1, \chi_1}^{ (2) } \), we therefore need to determine the first three terms in the Taylor expansion of~\( A(z) \) around~\( z = 0 \).

In order to avoid unnecessairy convergence issues, we will assume in the following transformations that~\( z > 0 \) .
Using
\[ r_{q_1}(h) = \sum_{ d \mid (q_1, h) } \mu\left( \frac{q_1}d \right) d \qquad \text{and} \qquad \sum_\twoln{c = 1}{ (c, q_1) = 1}^\infty \frac{ r_c(h) }{ c^{2 + 2z} } = \frac1{ \psi_{1 + 2z}(q_1) \zeta(2 + 2z) } \sum_\twoln{h_1 \mid h}{ (h_1, q_1) = 1 } \frac1{ {h_1}^{1 + 2z} }, \]
we can write~\( A(z) \) as
\[ A(z) = \sum_\twoln{h_1 = 1}{ (h_1, q_1) = 1 }^\infty \sum_{d \mid q_1} \frac{ \mu(d) }{ {h_1}^{2 + 2z} } \sum_{h_2 = 1}^\infty \frac1{\pi h_2} \int_0^\infty \! \frac\partial{\partial \xi} \left( V(\xi) \xi^{1 + z} \right) \sin\left( 2\pi \frac{h_1 h_2}{ d \xi } \right) \, \d\xi. \]
Since the sum over~\( h_2 \) is boundedly convergent (see~\cite[p.~4]{Iwa97}), we can exchange summation and integration.
By~\cite[(1.5)]{Iwa97} we then get
\[ A(z) = \sum_\twoln{h = 1}{ (h, q_1) = 1 }^\infty \frac1{ h^{1 + z} } \sum_{d \mid q_1} \frac{ \mu(d) }{ d^{1 + z} } \int_0^\infty \! \frac\partial{\partial \xi} \left( V\left( \frac h{ d \xi } \right) \frac1{ \xi^{1 + z} } \right) \left( \xi - [\xi] - \frac12 \right) \, \d\xi, \]
where \( [\xi] \) denotes the integer part of~\(\xi\).
The integral over~\( \xi \) can now be evaluated via the Euler-Maclaurin summation formula, which gives
\begin{align*}
  A(z) &= \sum_\twoln{h = 1}{ (h, q_1) = 1 }^\infty \frac1{ h^{1 + z} } \sum_{d \mid q_1} \frac{ \mu(d) }{ d^{1 + z} } \left( \sum_{n = 1}^\infty V\left( \frac h{ d n } \right) \frac1{ n^{1 + z} } - \int_0^\infty \! V\left( \frac h{ d \xi } \right) \frac1{ \xi^{1 + z} } \, \d\xi \right) \\
    &= \sum_\twoln{h, n = 1}{ (hn, q_1) = 1 }^\infty \frac1{ (hn)^{1 + z} } V\left( \frac hn \right) - \frac{ \psi_0(q_1) Z_{q_1}(2z) }{2z} \hat V(z).
\end{align*}
By~\eqref{5179} the double sum on the last line becomes
\[ \sum_\twoln{h, n = 1}{ (hn, q_1) = 1 }^\infty \frac1{ (hn)^{1 + z} } V\left( \frac hn \right) = \frac12 \sum_\twoln{h, n = 1}{ (hn, q_1) = 1 }^\infty \frac1{ (hn)^{1 + z} } \left( V\left( \frac hn \right) + V\left( \frac nh \right) \right) = \frac{ Z_{q_1}(z)^2 }{ 2 z^2 }. \]
Hence
\[ A(z) = \frac{ Z_{q_1}(z)^2 }{ 2 z^2 } - \frac{ \psi_0(q_1) Z_{q_1}(2z) }{2z} \hat V(z), \]
which eventually leads to the following expression for~\( P_{\chi_1, \chi_1}^{ (2) } \),
\begin{multline*}
  P_{\chi_1, \chi_1}^{ (2) }(\log t) = \Res{z = 0} \! \left( \frac{ Z_{q_1}(z)^6 }{ \psi_0(q_1) \psi_{1 + 2z}(q_1) Z_{q_1}(2z) \zeta(2 + 2z) } \frac{ t^z }{z^5} - \frac{ Z_{q_1}(z)^4 }{ \psi_{1 + 2z}(q_1) \zeta(2 + 2z) } \frac{ t^z }{z^5} \right) \\
    + \frac12 \, \Res{z = 0} \! \left( \frac{ Z_{q_1}(z)^4 }{ \psi_{1 + 2z}(q_1) \zeta(2 + 2z) } \frac{ t^z }{z^3} \right) \int_0^\infty \! V'(\xi) (\log \xi)^2 \, \d\xi + \frac1{ 24 } \frac{ \psi_0(q_1)^4 }{ \psi_1(q_1) \zeta(2) } \int_0^\infty \! V'(\xi) (\log\xi)^4 \, \d\xi.
\end{multline*}

All in all, we end up with
\begin{align}
  P_{\chi_1, \chi_1}(\log t) &= \, \Res{z = 0}\!\left( \frac{ {q_1}^z \psi_z(q_1)^6 }{ \psi_0(q_1) \psi_{2z}(q_1) \psi_{1 + 2z}(q_1) } \frac{ \zeta(1 + z)^6 }{ (2\pi)^z \zeta(1 + 2z) \zeta(2 + 2z) } t^z \right). \label{5508} \\
  \intertext{Remember that \( \psi_z(q) \)~was defined in~\eqref{4086}.
    The cases where~\( \chi_1 \neq \chi_2 \) can be evaluated in the same manner.
    If~\( \chi_1 \neq \chi_2\) but~\( q_1 = q_2 \), we get}
  \begin{split}
    P_{\chi_1, \chi_2}(\log t) &= \Res{z = 0} \! \left( \frac{ {q_1}^z \psi_z(q_1)^4 }{ \psi_0(q_1) \psi_{1 + 2z}(q_1) \psi_{2z}(q_1) } \frac{ \zeta(1 + z)^4 L( 1 + z, \overline{\chi_1} \chi_2 ) L( 1 + z, \chi_1\overline{\chi_2} ) }{ (2\pi)^z \zeta(1 + 2z) \zeta(2 + 2z) } t^z \right) \\
      &\phantom{ = {} } + \Re\left( \frac{ \overline{ G(\chi_1) } G(\chi_2) }{ q_1 } \frac{ L( 1, \chi_1 \overline{\chi_2} )^4 }{ L( 2, ( \chi_1 \overline{\chi_2} )^2 ) } + \chi_1 \chi_2(-1) \frac{ G(\chi_1) \overline{ G(\chi_2) } }{ q_1 } \frac{ L( 1, \overline{\chi_1} \chi_2 )^4 }{ L( 2, ( \overline{\chi_1} \chi_2 )^2 ) } \right),
  \end{split} \label{5583}
  \intertext{while if~\( q_1 \neq q_2 \), we get}
  \begin{split}
    P_{\chi_1, \chi_2}(\log t) &= \, \Res{z = 0} \!\Bigg( \frac{ 2 (q_1 q_2)^z \psi_z(q_1)^2 \psi_z(q_2)^2 }{ \left( \psi_0(q_2) {q_1}^z \psi_{2z}(q_1) + \psi_0(q_1) {q_2}^z \psi_{2z}(q_2) \right) \psi_{1 + 2z}(q_1 q_2) } \\
      &\phantom{ = {} } \qquad\qquad\quad\qquad\qquad\qquad\quad \cdot \frac{ \zeta(1 + z)^4 L(1 + z, \overline{\chi_1} \chi_2 ) L(1 + z, \chi_1 \overline{\chi_2} ) }{ (2\pi)^z \zeta(1 + 2z) \zeta(2 + 2z) } t^z \Bigg).
  \end{split} \label{5576}
\end{align}
Note that the second term on the right hand side in~\eqref{5583} disappears if \(\chi_1\)~and~\(\chi_2\) do not have the same parity.

\vskip0.5cm

\noindent
Berke Topacogullari, EPFL SB MATH TAN, Station~8, 1015~Lausanne, Switzerland \\
\textit{E-mail:} \texttt{\href{mailto:berke.topacogullari@epfl.ch}{berke.topacogullari@epfl.ch}}

\end{document}